\title[Futaki Invariants, Yau's Conjecture, Hull-Strominger]{Futaki Invariants and Yau's Conjecture on the Hull-Strominger system}
\author[M. Garcia-Fernandez]{Mario Garcia-Fernandez}
\address{Universidad Aut\'onoma de Madrid and Instituto de Ciencias Matem\'aticas (CSIC-UAM-UC3M-UCM)\\ Ciudad
	Universitaria de Cantoblanco\\ 28049 Madrid, Spain}
\email{mario.garcia@icmat.es}
\author[R. Gonzalez Molina]{Raul Gonzalez Molina}
\address{Instituto de Ciencias Matem\'aticas (CSIC-UAM-UC3M-UCM)\\ Nicol\'as Cabrera 13--15, Cantoblanco\\ 28049 Madrid, Spain}
  \email{raul.gonzalez@icmat.es}
\thanks{This work is partially supported by the Spanish Ministry of Science and Innovation, under grants PID2019-109339GA-C32, EUR2020-112265, and I+D+i SEV2015-05554-18-1 funded by MCIN/AEI/10.13039/501100011033}
\theoremstyle{plain}
\newtheorem{theorem}{Theorem}[section]
\newtheorem{lemma}[theorem]{Lemma}
\newtheorem{proposition}[theorem]{Proposition}
\newtheorem{conjecture}[theorem]{Conjecture}
\newtheorem{question}[theorem]{Question}
\theoremstyle{definition}
\newtheorem{definition}[theorem]{Definition}
\newtheorem{definition-theorem}[theorem]{Definition-Theorem}
\newtheorem{example}[theorem]{Example}
\theoremstyle{remark}
\newtheorem{remark}[theorem]{Remark}
\numberwithin{equation}{section} \setcounter{tocdepth}{1}
\newcommand{\tr}{\operatorname{tr}}
\newcommand{\End}{\operatorname{End}}
\newcommand{\Hom}{\operatorname{Hom}}
\newcommand{\Ker}{\operatorname{Ker}}
\newcommand{\ad}{\operatorname{ad}}
\newcommand{\dbar}{\bar{\partial}}
\newcommand{\CC}{{\mathbb C}}
\newcommand{\RR}{{\mathbb R}}
\newcommand{\surj}{\to\kern-1.8ex\to}
\newcommand{\IP}[1]{\left<#1\right>}
\begin{document}

\maketitle

\begin{abstract}%

We find a new obstruction to the existence of solutions of the Hull-Strominger system, which goes beyond the balanced property of the Calabi-Yau manifold $(X,\Omega)$ and the Mumford-Takemoto slope stability of the bundle over it. The basic principle is the construction of a (possibly indefinite) Hermitian-Einstein metric on the holomorphic string algebroid associated to a solution of the system, provided that the connection $\nabla$ on the tangent bundle is Hermitian-Yang-Mills. Using this, we define a family of Futaki invariants obstructing the existence of solutions in a given balanced class. 
Our results are motivated by a strong version of a conjecture by Yau on the existence problem for these equations.

\end{abstract}


\section{Introduction}\label{sec:intro}

Let $X$ be a compact complex manifold of dimension three endowed with a holomorphic volume form $\Omega$. Let $V$ be a holomorphic vector bundle over $X$. Let $\alpha$ be a real constant. The Hull-Strominger system, for Hermitian metrics $g$ on $X$ and $h$ on $V$, is given by
\begin{equation}\label{eq:HSintro}
\begin{split}
F_h \wedge \omega^2 & = 0,\\
d(\|\Omega\|_\omega \omega^2) & = 0,\\
dd^c \omega - \alpha(\tr R_\nabla \wedge R_\nabla - \tr F_h \wedge F_h) & = 0,
\end{split}
\end{equation} 
where $\omega$ denotes the Hermitian form of $g$. The first line in \eqref{eq:HSintro} is the Hermitian-Einstein equation for the metric $h$. The conformally balanced condition, in the second line, is equivalent to the holonomy of the Bismut connection of $g$ being contained in $\operatorname{SU}(3)$. In the last equation, known as the Bianchi identity, there is an ambiguity in the choice of a metric connection $\nabla$ in the tangent bundle of the manifold, back to its origins in heterotic string theory \cite{HullTurin,Strom}. 

The Hull-Strominger system has recently generated a great deal of interest in mathematics, both for its applications to the study of non-K\"ahler Calabi-Yau manifolds \cite{FeiReview,GFReview,PPZ3} and its relation to a conjectural generalization of mirror symmetry \cite{AAGa,Yau2005}. As originally proposed in the seminal work by Li-Yau \cite{LiYau} and Fu-Yau \cite{FuYau2,FuYau} on these equations, it is expected that the Hull-Strominger system plays a key role on the geometrization of \emph{Reid's fantasy} \cite{CoPiYau,FuLiYau}, connecting complex threefolds with trivial canonical bundle via conifold transitions. This proposal has important implications in our understanding of the moduli space of projective Calabi-Yau manifolds in complex dimension three, and also physical applications to the \emph{string landscape}. 

The existence and uniqueness problem for the Hull-Strominger is currently widely open. The present work is motivated by a conjecture about the existence of solutions by S.-T. Yau \cite{Yau2006}, which provides an important test for the general methods recently developed in \cite{GaRuShTi,Phong}. In order to understand 
this question, let us first explain three types of necessary conditions for \eqref{eq:HSintro}. Firstly, for $(X,\Omega,V)$ to admit a solution there are some evident cohomological obstructions on the Chern classes, namely,
\begin{equation}\label{eq:c1c2}
\deg_{\mathfrak{b}}(V) = 0, \qquad  ch_2(V) = ch_2(X) \in H^{2,2}_{BC}(X,\RR),
\end{equation}
where $ch_2$ denotes the second Chern character, $\deg_{\mathfrak{b}}(V):= c_1(V) \cdot \mathfrak{b} \in \mathbb{R}$, and
\begin{equation}\label{eq:balancedclass}
\mathfrak{b} := [\|\Omega\|_\omega \omega^2] \in H^{2,2}_{BC}(X,\RR).
\end{equation}
Here $H^{p,q}_{BC}(X)$ are the Bott-Chern cohomology groups of the complex manifold $X$, 
and $H^{p,p}_{BC}(X,\mathbb{R}) \subset H^{p,p}_{BC}(X)$ is the canonical real structure. Secondly, the Calabi-Yau threefold $(X,\Omega)$ must be balanced  \cite{Michel}, that is, it has to admit a balanced Hermitian metric with balanced class $\mathfrak{b}$. 
Thirdly, the Donaldson-Uhlenbeck-Yau Theorem \cite{Don,UYau} and its extensions to Hermitian manifolds (see \cite{Buchdahl,LiYauHYM,lt}) requires the Mumford-Takemoto slope stability of $V$ with respect to $\mathfrak{b}$.

In this setup, Yau's Conjecture for the Hull-Strominger system \cite{Yau2006} states that the previous three necessary conditions are the only obstruction to find a solution of the Hull-Strominger system (see Conjecture \ref{conj:Yau}). In order to make progress in this interesting question, about which we know very little at present, it is natural to strenghten the statement in two ways. On the one hand,  it is desirable that a complete answer to the existence problem has control on the balanced class $\mathfrak{b}$, producing a solution of the Hull-Strominger with the same balanced class that we use, a priori, to measure the stability of the bundle $V$ (without this assumption, an affirmative answer to Conjecture \ref{conj:Yau} was given in \cite{AGF1}, for $V$ stable with respect to a K\"ahler class). The existence of solutions when $\mathfrak{b}$ is prescribed to be the square of a K\"ahler class has been recently established by Collins, Picard, and Yau in \cite{CoPiYau2}. Note, however, that, even assuming that $X$ is projective, these classes do not exhaust the balanced cone due to a result of Fu and Xiao \cite{FuXiao}. 

On the other hand, as originally formulated in \cite{Yau2006}, the connection $\nabla$ in \eqref{eq:HSintro} is not specified in the statement of Conjecture \ref{conj:Yau}. In a later formulation \cite{Yau2}, $\nabla$ is taken to be the Chern connection of $g$ following a suggestion in \cite{Strom}. However, a different choice of connection seems to have both more physical and geometrical significance, namely, to take $\nabla$ satisfying the Hermitian-Yang-Mills equations
\begin{equation}\label{eq:HYMintro}
R_\nabla^{0,2} = 0, \qquad R_\nabla \wedge \omega^{2} = 0.
\end{equation}
With this ansatz for $\nabla$, a solution of \eqref{eq:HSintro} solves the \emph{heterotic equations of motion} \cite{FeIvUgVi,Ivan09} and furthermore has many desirable properties in perturbation theory \cite{OssaSvanes,Hull2,MaSp}. As for the geometry, solutions of \eqref{eq:HSintro} satisfying \eqref{eq:HYMintro} are \emph{generalized Ricci flat} \cite{Ga0,Ga1} and have a moment map interpretation \cite{Waldram,GaRuTi3}, which leads to an interesting metric on its moduli space. Furthermore, there is currently strong evidence that these solutions play an important role in \emph{(0,2) mirror symmetry} via T-duality and the theory of vertex algebras \cite{AAGa,Arriba,Ga2}.

Motivated by the previous discussion, in the present work we propose to address the following strong version of Yau's Conjecture \ref{conj:Yau} with the ansatz \eqref{eq:HYMintro}. Notice that a solution of \eqref{eq:HYMintro} induces a (possibly) non-standard structure of holomorphic vector bundle on $T^{1,0} = T^{1,0}X$, which we denote $V_0 = (T^{1,0},\nabla^{0,1})$. Let $h^{0,1}_{\dbar}(X)$ be the dimension of the Dolbeault cohomology group $H^{0,1}_{\dbar}(X)$.

\begin{question}\label{question}
Let $(X,\Omega)$ be a compact Calabi-Yau threefold with $h^{0,1}_{\dbar}(X) = 0$ and endowed with a balanced class $\mathfrak{b}$. Let $V$ be a $\mathfrak{b}$-polystable holomorphic vector bundle over $X$ satisfying \eqref{eq:c1c2}. Let $V_0$ be a generic $\mathfrak{b}$-polystable holomorphic vector bundle structure on $T^{1,0}$. Does $(X,\Omega,V)$ admit a solution $(g,h)$ of the Hull-Strominger system \eqref{eq:HSintro} with $\alpha \neq 0$ and balanced class $\mathfrak{b}$, such that $\nabla$ is the Chern connection of a Hermitian-Einstein metric $h_0$ on $V_0$? 
\end{question}

For non-generic $V_0$ or $h^{0,1}_{\dbar}(X) \neq 0$, it is not difficult to find examples which establish a negative solution to the previous question (see Section \ref{sec:Yaustrong}). The idea of these technical conditions is to prevent the existence of `degenerate' solutions of the Bianchi identity, that is, with $\tr R_\nabla \wedge R_\nabla = \tr F_h \wedge F_h$, which cannot exist when $X$ is a non-K\"ahler manifold. We speculate that these obstructions arise, indeed, from `categorical symmetries' of the combined equations \eqref{eq:HSintro} and \eqref{eq:HYMintro}, 
which should find an explanation via higher gauge theory \cite{AGaT}.


\subsection{Futaki invariants} 

The main goal of the present work is to provide a new tool which help us to address the strong version of Yau's Conjecture formulated in Question \ref{question}. In order to do this, we will exploit the special features of the solutions of the Hull-Strominger system with the ansatz \eqref{eq:HYMintro}. More precisely, 
we will be able to use generalized geometry and to apply the theory of metrics on holomorphic string algebroids introduced in \cite{GaRuShTi,GaRuTi2}.

Let $(X,\Omega)$ be a compact Calabi-Yau manifold endowed with a pair of holomorphic vector bundles $V_0$ and $V$, as in Question \ref{question}. For simplicity, in this introduction we will assume that $\mathrm{dim}_\mathbb{C} X=3$, but our results work also for an abstract definition of the Hull-Strominger system in higher dimensions (see Definition \ref{def:HSabstract}). Denote by $P$ the holomorphic principal bundle of split frames of $V_0 \oplus V$. Then, $P$ satisfies
\begin{equation}\label{eq:BC22intro}
p_1(P) = 0 \in H^{2,2}_{BC}(X,\RR),
\end{equation}
where $p_1(P)$ denotes the \emph{first Pontryagin class} associated via Chern-Weyl Theory to the bilinear form
\begin{equation}\label{eq:pairingHSintro}
\IP{,} : = - \alpha \tr_{V_0} + \alpha \tr_{V}.
\end{equation}
Using \eqref{eq:BC22intro}, one can canonically associate to $P$ a family of holomorphic vector bundle extensions of the form
\begin{equation}\label{eq:holCoustrintro}
0 \longrightarrow T^*_{1,0} \longrightarrow \mathcal{Q} \longrightarrow A_P \longrightarrow 0,
\end{equation}
where $A_P$ denotes the holomorphic Atiyah algebroid of $P$. In the language of \cite{GaRuShTi,GaRuTi2}, these extensions correspond to the underlying orthogonal bundle of a particular class of holomorphic Courant algebroids, called \emph{string}. 
Consider the Aeppli cohomology groups of the complex manifold $H^{p,q}_{A}(X)$, 
and the canonical real structures $H^{p,p}_A(X,\RR) \subset H^{p,p}_A(X)$. Consider the map to the Dolbeault cohomology of $X$ induced by the $\partial$ operator:
$$
\partial \colon H^{1,1}_A(X,\RR) \to H^{2,1}_{\dbar}(X).
$$
Then, the holomorphic vector bundle extensions \eqref{eq:holCoustrintro} arising from the string algebroids of our interest are parametrized by an affine space $\mathfrak{S}$ modelled over the vector space $\operatorname{Im} \partial$ (see Proposition \ref{prop:BCclassification} and Remark \ref{rem:forgetful}).  

Consider the family of finite-dimensional complex Lie algebras
$$
\mathfrak{H} \to \mathfrak{S}
$$
where the fibre over $\mathfrak{s} \in \mathfrak{S}$ is given by the Lie algebra of the group of holomorphic gauge transformations of the $SO(r,\mathbb{C})$-bundle $\mathcal{Q}_{\mathfrak{s}}$
$$
\mathfrak{H}_\mathfrak{s} := H^0(X,\End \mathcal{Q}_\mathfrak{s}).
$$
Then, there is a family of characters (see Section \ref{sec:Futakidef})
\begin{equation*}
    \xymatrix{
 \mathcal{F} \colon \mathfrak{H} \ar[r] & H^{1,1}_A(X).
 }
\end{equation*}
More explicitly, for a real balanced class $\mathfrak{b} \in H^{2,2}_{BC}(X,\RR)$ and $\mathfrak{s} \in \mathfrak{S}$, the Lie algebra character
$$
\IP{\mathcal{F}_{\mathfrak{s}},\mathfrak{b}} \colon \mathfrak{H}_{\mathfrak{s}} \to \mathbb{C}.
$$
is given by a Futaki invariant for the orthogonal bundle $\mathcal{Q}_{\mathfrak{s}}$ defined by $\mathfrak{b}$, where we use the duality isomorphism $H^{2,2}_{BC}(X)^* \cong H^{1,1}_A(X)$.

\begin{theorem}\label{t:Futakiintro}
Let $(X,\Omega)$ be a Calabi-Yau threefold endowed with a pair of holomorphic vector bundles $V_0$ and $V$ satisfying \eqref{eq:c1c2}. Assume that $(X,\Omega,V)$ admits a solution of the Hull-Strominger system \eqref{eq:HSintro} with balanced class $\mathfrak{b} \in H^{2,2}_{BC}(X,\RR)$, such that $\nabla$ is the Chern connection of a Hermitian-Einstein metric $h_0$ on $V_0$. Then, there exists $\mathfrak{s} \in\mathfrak{S}$ such that $\IP{\mathcal{F}_\mathfrak{s},\mathfrak{b}} = 0$.
\end{theorem}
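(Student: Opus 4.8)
The plan is to promote the given solution of \eqref{eq:HSintro} to a Hermitian--Einstein datum on one of the holomorphic string algebroids $\cQ_\mathfrak{s}$ and then to run the classical argument that a Hermitian--Einstein metric annihilates the associated Futaki character; the new geometric content lies in the passage to the string algebroid, after which the vanishing is essentially formal. To begin, I would check that the hypothesis on $\nabla$ places us in the regime of the ansatz \eqref{eq:HYMintro}: since $\nabla$ is the Chern connection of the Hermitian--Einstein metric $h_0$ on $V_0$, its curvature satisfies $R_\nabla^{0,2}=0$ and $R_\nabla\wedge\omega^2=0$, with $\omega$ the balanced metric of the solution, so $\nabla$ is Hermitian--Yang--Mills. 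Together with $F_h\wedge\omega^2=0$ for $h$ on $V$, this produces an HYM connection on $V_0\oplus V$, and hence on the principal bundle $P$ of split frames carrying the pairing \eqref{eq:pairingHSintro}.

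Next, I would use the solution to single out the point $\mathfrak{s}\in\mathfrak{S}$. The balanced metric $\omega$ together with the Chern connections of $(h_0,h)$ assembles, via the theory of metrics on string algebroids developed in \cite{GaRuShTi,GaRuTi2}, into a holomorphic string algebroid $\cQ_\mathfrak{s}$ fitting into \eqref{eq:holCoustrintro}; here the Bianchi identity in \eqref{eq:HSintro}, which realises $p_1(P)=0$ at the level of differential forms, is exactly what furnishes the holomorphic extension class and thereby the distinguished $\mathfrak{s}$. The main input is then the basic principle that the same data induce a (possibly indefinite) Hermitian metric $\mathbb{G}$ on $\cQ_\mathfrak{s}$, compatible with the orthogonal structure, that is Hermitian--Einstein with respect to $\omega$. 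Because $\cQ_\mathfrak{s}$ is an orthogonal bundle its degree vanishes, and since the identity endomorphism fails to be skew for the pairing $\IP{,}$, the Einstein factor is forced to be zero; thus the Chern curvature of $\mathbb{G}$ satisfies $\Lambda_\omega F_\mathbb{G}=0$.

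Then I would evaluate the character using this preferred metric. For $\xi\in\mathfrak{H}_\mathfrak{s}=H^0(X,\End\cQ_\mathfrak{s})$ the class $\mathcal{F}_\mathfrak{s}(\xi)\in H^{1,1}_A(X)$ is independent of the chosen metric on $\cQ_\mathfrak{s}$ precisely because $\xi$ is holomorphic, so I may represent it by the $(1,1)$-form $\IP{\xi,F_\mathbb{G}}$ and pair it, via Bott--Chern/Aeppli duality, against the balanced representative $\|\Omega\|_\omega\omega^2$ of $\mathfrak{b}$. As $\IP{\xi,F_\mathbb{G}}\wedge\|\Omega\|_\omega\omega^2$ is a pointwise multiple of $\IP{\xi,\Lambda_\omega F_\mathbb{G}}\,\|\Omega\|_\omega\omega^3$ and $\Lambda_\omega F_\mathbb{G}=0$, the integrand vanishes identically; hence $\IP{\mathcal{F}_\mathfrak{s},\mathfrak{b}}(\xi)=0$ for every $\xi$, i.e. $\IP{\mathcal{F}_\mathfrak{s},\mathfrak{b}}=0$.

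The step I expect to be the main obstacle is the second one: establishing, for an arbitrary solution of \eqref{eq:HSintro} with HYM $\nabla$, that the solution genuinely yields a holomorphic string algebroid together with an indefinite Hermitian--Einstein metric in the precise sense required. The delicacy is that $\mathbb{G}$ is only pseudo-Hermitian and the structure group is orthogonal rather than unitary, so the standard Hermitian--Einstein/Futaki machinery, built for positive-definite metrics on unitary bundles, must be adapted; the concluding vanishing in the third step is robust to these issues, however, since it uses only the holomorphicity of $\xi$ and the identity $\Lambda_\omega F_\mathbb{G}=0$, and not any positivity.
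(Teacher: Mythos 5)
Your proposal follows essentially the same route as the paper: the solution is packaged into the Bott--Chern algebroid $\cQ_{P,2i\partial\omega,\theta^h}$ with $h=h_0\oplus h_1$, whose isomorphism class is the required $\mathfrak{s}$; the induced indefinite generalized Hermitian metric $\mathbf{G}$ of Lemma \ref{t:Ggeneralized1} is shown to satisfy $F_{\mathbf{G}}\wedge\omega^{2}=0$; and the vanishing of $\IP{\mathcal{F}_\mathfrak{s},\mathfrak{b}}$ then follows from the representative- and metric-independence of the character together with the closedness of $\|\Omega\|_\omega\omega^2$, exactly as in Lemma \ref{l:clasFut} and Theorem \ref{t:Futaki}. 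The step you leave as a ``basic principle'' and flag as the main obstacle is, however, the technical heart of the paper (Proposition \ref{prop:HS-HE}), and without it the argument is incomplete. The paper supplies it by computing the curvature and second Ricci curvature of $\mathbf{G}$ in classical terms (Lemma \ref{lemma:connectioncurvature}, Proposition \ref{prop:secondRicci}), so that the Hermitian--Einstein condition for $\mathbf{G}$ unpacks into three equations (Lemma \ref{lem:HE}): $[S_h,\cdot]=0$, a divergence-type identity $d^{h*}F_h+i_{\theta_\omega^\sharp}F_h-*(F_h\wedge *d^c\omega)=0$, and $\rho_B+\IP{S_h,F_h}=0$. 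The first holds because $h_0$ and $h$ are Hermitian--Yang--Mills with \emph{zero} Einstein constants --- this is where \eqref{eq:c1c2} and $c_1(X)=0$ enter, and it is the actual reason the Einstein factor vanishes, rather than your (correct but only consistency-checking) remark that the identity is not skew for $\IP{,}$; the second is automatic for any Hermitian--Einstein reduction on a Hermitian manifold (Lemma \ref{lemma:pHYM1}); and the third reduces to $\rho_B=0$, which is exactly where the conformally balanced equation $d(\|\Omega\|_\omega\omega^2)=0$ is used (Proposition \ref{prop:HS-BHE}). Your concluding step and your reduction of the hypothesis on $\nabla$ to the ansatz \eqref{eq:HYMintro} are both correct and match the paper.
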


This gives a new potential obstruction to the existence of solutions of the Hull-Strominger system with the ansatz \eqref{eq:HYMintro}, which goes beyond the balanced property of the Calabi-Yau manifold $(X,\Omega)$ and the Mumford-Takemoto slope stability of the bundles $V_0$ and $V$.

As a consequence of our main result, in order to find a negative answer to Question \ref{question} it suffices to find a tuple $(X,\Omega,V)$ and a balanced class $\mathfrak{b} \in H^{2,2}_{BC}(X,\RR)$ as in the statement, such that $V$ is $\mathfrak{b}$-polystable and 
\begin{equation*}
\IP{\mathcal{F}_\mathfrak{s},\mathfrak{b}} \neq 0, \qquad \forall \mathfrak{s} \in \mathfrak{U}^0,
\end{equation*}
where $\mathfrak{U}^0$ denotes the restriction of the relative family of string algebroid extensions over a dense open subset of the moduli space for $V_0$. Despite our efforts, we have not been able to find an example which fulfills this strong condition, and hence Question \ref{question} is still open. Still, we hope that our new invariant should be able to decide the non-existence of solutions for a specific choice of $V_0$, a question we plan to address in future work.

When the Calabi-Yau manifold $X$ satisfies the $\partial\dbar$-Lemma the space $\mathfrak{S}$ reduces to a point, that is, $\mathcal{Q}$ in \eqref{eq:holCoustrintro} is uniquely determined by $P$ up to isomorphism \cite{GaRuShTi}. In this case we obtain a unique invariant $\mathcal{F}_0$ obstructing the existence of solutions, which can be regarded as a \emph{stringy version} of the classical Futaki invariant for the holomorphic bundle $P$. Based on this, we expect that $\mathcal{F}_0$ provides an efficient tool to address Question \ref{question} in the case of Calabi-Yau manifolds obtained via conifold transitions and flops, with potential interesting implications in the geometrization of Reid's fantasy and the string landscape. It is interesting to observe that our invariant $\mathcal{F}_0$ identically vanishes on a general Clemens-Friedman manifold, since the balanced cone reduces to the zero class \cite{Friedman2019}. This is in agreement with the expectation that these non-K\"ahler manifolds carry solutions of the Hull-Strominger system. 

\subsection{Coupled Hermitian-Einstein metrics}

Our method of proof of Theorem \ref{t:Futakiintro} has several interesting salient features. It is inspired by an important result by De La Ossa, Larfors, and Svanes \cite{OssaLarforsSvanes}, who showed that the Hull-Strominger system is equivalent to a suitable Hermitian-Yang-Mills equation on a Courant algebroid to all orders in perturbation theory. Section \ref{sec:HE} is devoted to give a precise mathematical counterpart of their result (see Remark \ref{rem:DelaOssa}): in Section \ref{sec:BHE} we characterize the Hermitian-Einstein condition
\begin{equation}\label{eq:HEintro}
F_\mathbf{G} \wedge \omega^{n-1} = 0,
\end{equation}
for a generalized pseudo-Hermitian metric $\mathbf{G}$ on a holomorphic string algebroid (see Definition \ref{d:generalizedmetriHerm}) in terms of classical tensors. Using this, in Proposition \ref{prop:HS-HE} we prove that any solution of the Hull-Strominger system with the ansatz \eqref{eq:HYMintro} induces a solution of \eqref{eq:HEintro}, which allows us to construct the Futaki invariants in Theorem \ref{t:Futakiintro}.

Equation \eqref{eq:HEintro} motivates the definition of a new system of coupled equations, which we call the \emph{coupled Hermitian-Einstein system} (see Definition \ref{def:BHEsystem}), inducing solutions of \eqref{eq:HEintro}. The coupled Hermitian-Einstein system is weaker and more flexible than the Hull-Strominger system as, for instance, does not require the complex manifold to be balanced neither to have a holomorphic volume form. Hence, we expect to find solutions on compact complex manifolds with $c_1(X) = 0$ but non-trivial canonical bundle. Furthermore, its solutions correspond to a natural class of generalized Ricci flat metrics on string algebroids and exhibit an interesting relation to heterotic supergravity, giving further motivation for their study (see Remark \ref{rem:GRF} and Remark \ref{rem:heterotic}). This problem will be investigated elsewhere.

The aforementioned ideas and methods of the present paper are exploded further in the sequel \cite{GFGM}, where we investigate stability conditions for the holomorphic string algebroid associated to a solution of the Hull-Strominger system with the ansatz \eqref{eq:HYMintro}. Even though our picture is mostly conjectural, we expect that this stability condition will lead us to new obstructions to the existence of solutions in future studies, similarly as in \cite{GaJoSt}.

\section{Strengthening Yau's Conjecture}\label{sec:YauConj}

\subsection{Original statement}

In this section we present a precise statement of Yau's Conjecture for the Hull-Strominger system \ref{eq:HSintro}, as originally stated in \cite{Yau2006}, and comment on the strong version that we propose in Question \ref{question}, with the ansatz \ref{eq:HYMintro}. We start by introducing some notation. Given a compact complex manifold $X$, we will denote by $H^{p,q}_{BC}(X)$ its Bott-Chern cohomology groups, defined by
\begin{equation}\label{eq:BC}
H^{p,q}_{BC}(X)=\frac{\mathrm{ker} \hspace{1mm} d: \Omega^{p,q}(X,\mathbb{C}) \longrightarrow \Omega^{p+q+1}(X,\mathbb{C})}{\mathrm{Im} \hspace{1mm}  dd^{c}: \Omega^{p-1,q-1}(X,\mathbb{C}) \longrightarrow \Omega^{p,q}(X,\mathbb{C})},
\end{equation}
and by $H^{p,p}_{BC}(X,\mathbb{R}) \subset H^{p,p}_{BC}(X)$ the canonical real structure. We will say that $X$ is balanced if it admits a balanced metric, that is, a Hermitian metric $g_0$ with Hermitiam form $\omega_0 = g_0(J,)$, such that $d \omega_0^{n-1} = 0$. In this case, $\omega_0$ defines a balanced class \begin{equation}\label{eq:balancedclass0}
\mathfrak{b}_0 := [\omega^{n-1}_0] \in H^{n-1,n-1}_{BC}(X,\RR).
\end{equation}
Furthermore, we will say that $X$ is a Calabi-Yau manifold if it admits a holomorphic volume form $\Omega$, which will be fixed in the sequel. Observe that we do not require that $X$ is projective, neither K\"ahler.

Let $V$ a holomorphic vector bundle over a balanced manifold $X$, with balanced class $\mathfrak{b}_0 = [\omega^{n-1}_0]$. Then, by the Donaldson-Uhlenbeck-Yau Theorem \cite{Don,UYau} and its extensions to Hermitian manifolds (see \cite{Buchdahl,LiYauHYM,lt}) the existence of solutions of the Hermitian-Einstein equation
$$
F_h \wedge \omega_0^{n-1} = 0,
$$
for a Hermitian metric $h$ on $V$, is equivalent to the Mumford-Takemoto slope polystability of $V$ with respect to $\mathfrak{b}_0$.

With these preliminaries, Yau's Conjecture for the Hull-Strominger system can be stated as follows:

\begin{conjecture}[Yau's Conjecture \cite{Yau2006}]\label{conj:Yau}
Let $(X,\Omega)$ be a compact Calabi-Yau threefold endowed with a balanced class $\mathfrak{b}_0$. Let $V$ be a holomorphic vector bundle over $X$ satisfying \eqref{eq:c1c2}. If $V$ is polystable with respect to $\mathfrak{b}_0$, then $(X,\Omega,V)$ admits a solution of the Hull-Strominger system \eqref{eq:HSintro}.
\end{conjecture}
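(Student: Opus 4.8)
The plan is to prove existence by a parabolic deformation, taking as the main tool the Anomaly flow \cite{Phong,PPZ3}, whose stationary points are exactly the solutions of \eqref{eq:HSintro}. I begin by fixing a balanced metric $\omega_0$ with $[\omega_0^2] = \mathfrak{b}_0$, which exists by hypothesis; being balanced it is a fortiori conformally balanced, so it satisfies the second equation of \eqref{eq:HSintro}. Since $V$ is polystable with respect to $\mathfrak{b}_0$, the Donaldson--Uhlenbeck--Yau Theorem \cite{Don,UYau} and its Hermitian extensions provide a Hermitian--Einstein metric $h$ on $V$ with $F_h \wedge \omega_0^2 = 0$, and likewise on the holomorphic structure $V_0 = (T^{1,0},\nabla^{0,1})$ we obtain $h_0$ with Chern connection $\nabla$ satisfying $R_\nabla \wedge \omega_0^2 = 0$. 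The cohomological hypothesis \eqref{eq:c1c2}, namely $ch_2(V) = ch_2(X)$ in $H^{2,2}_{BC}(X,\RR)$, guarantees that the anomaly term is exact in Bott--Chern cohomology, so there is no cohomological obstruction to solving the third equation of \eqref{eq:HSintro} within the class $\mathfrak{b}_0$.

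With this initial data I would run the flow
\begin{equation*}
\partial_t(\|\Omega\|_\omega \, \omega^2) = dd^c \omega - \alpha\,(\tr R_\nabla \wedge R_\nabla - \tr F_h \wedge F_h),
\end{equation*}
where at each time $h$ and $h_0$ are recomputed as the Hermitian--Einstein metrics for the evolving $\omega$, so that the first equation of \eqref{eq:HSintro} and the ansatz \eqref{eq:HYMintro} hold along the flow. The key structural point is that the flow preserves the conformally balanced condition: the right-hand side is $d$-closed (the anomaly term is closed by Chern--Weil, and $dd^c\omega$ is exact), so $d(\|\Omega\|_\omega \omega^2)$ is constant in $t$ and vanishes for all time if it vanishes at $t=0$. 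Hence the balanced class $\mathfrak{b}$ is fixed throughout and a stationary point of the flow is precisely a solution of the full system with the prescribed balanced class; it therefore suffices to establish long-time existence and convergence.

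Short-time existence is the tractable part. Writing the unknown as the positive $(2,2)$-form $\|\Omega\|_\omega \omega^2$, the flow is, after diagonalising the principal symbol, a weakly parabolic second-order equation for $\omega$ coupled to the elliptic Hermitian--Einstein equations for $h$ and $h_0$; the latter are solved at each time by the implicit function theorem around the Donaldson--Uhlenbeck--Yau solution, using that polystability makes the relevant Hodge Laplacian invertible transverse to the finite-dimensional space of holomorphic endomorphisms. Parabolic Schauder theory then yields a unique short-time solution, and linearising at a candidate stationary limit gives a Fredholm operator whose kernel is governed by the automorphisms of the data. The role of polystability is twofold: it produces the initial Hermitian--Einstein metrics, and it is expected to supply the coercivity of an associated energy that would control the flow for all time.

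The hard part --- and the reason Conjecture \ref{conj:Yau} remains open --- is the a priori estimates required for long-time existence and convergence. The anomaly term $\tr R_\nabla \wedge R_\nabla - \tr F_h \wedge F_h$ is a \emph{sign-indefinite} quadratic expression in the curvatures, so the maximum principle does not directly produce a uniform $C^0$ bound on the conformal factor $\|\Omega\|_\omega$, and the convexity underlying Evans--Krylov estimates is absent. Worse, since $\nabla$ is forced to remain Hermitian--Einstein for the evolving metric, its curvature $R_\nabla$ is a nonlocal, moving function of $\omega$, so even a zeroth-order bound feeds back into the top-order term and the standard bootstrap is obstructed. To close the argument one would need to extract coercivity from the polystability of $V$ and $V_0$ --- for instance, by identifying a Donaldson-type functional, monotone along the flow, whose properness is equivalent to slope polystability and which forces compactness of the orbit. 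Establishing such a functional, together with the resulting $C^0$ and $C^2$ estimates in the presence of the indefinite anomaly term, is the decisive and still-missing step; the known existence results of Fu--Yau \cite{FuYau2,FuYau} and of Collins--Picard--Yau \cite{CoPiYau2} succeed precisely because extra geometric structure supplies these estimates in special cases.
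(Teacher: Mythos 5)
This statement is a \emph{conjecture}: the paper contains no proof of it, and indeed the entire point of the paper is to construct potential obstructions (the Futaki invariants of Theorem \ref{t:Futaki}) suggesting that even stronger forms of it may fail. Your proposal is therefore not comparable to any proof in the paper, and by your own admission it is not a proof at all: the decisive step --- a priori estimates giving long-time existence and convergence of the flow --- is explicitly left open. What you have written is a (reasonable) strategy outline, and the honest final paragraph concedes exactly the gap that keeps Conjecture \ref{conj:Yau} open.

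Beyond incompleteness, there is a structural problem with your scheme. By imposing the ansatz \eqref{eq:HYMintro} --- taking $\nabla$ to be the Chern connection of a Hermitian--Einstein metric $h_0$ on a polystable structure $V_0$ --- and by fixing the balanced class along the flow, you are attempting to prove the \emph{strong} version of the conjecture (Question \ref{question}), not Conjecture \ref{conj:Yau} as stated. The paper shows this strong statement is false without further hypotheses: Proposition \ref{prop:V0h01} and Examples \ref{ex:noHSNil} and following exhibit tuples $(X,\Omega,V)$ with $V$ polystable and \eqref{eq:c1c2} satisfied, for which no solution with your ansatz and prescribed balanced class exists (the Bianchi identity forces $dd^c\omega=0$ and hence a K\"ahler metric on a non-K\"ahler $X$). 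For such data your flow cannot converge no matter how good the estimates are, so any correct implementation would at minimum need the genericity of $V_0$ and $h^{0,1}_{\dbar}(X)=0$ hypotheses of Question \ref{question}, and would still have to contend with the vanishing of $\IP{\mathcal{F}^\alpha_\mathfrak{s},\mathfrak{b}}$ as a necessary condition (Theorem \ref{t:Futaki}). Separately, your short-time existence claim is unjustified: the Anomaly flow of \cite{Phong} is only weakly parabolic, its known short-time existence takes $\nabla$ to be the Chern connection of the evolving metric (with smallness conditions on the curvature), and no analogue is known when $\nabla$ is defined nonlocally by solving a Donaldson--Uhlenbeck--Yau problem on a fixed $V_0$ at each time; moreover, when $V_0$ or $V$ is strictly polystable rather than stable, the Hermitian--Einstein metric is neither unique nor smoothly dependent on $\omega$, so ``recomputing $h$ and $h_0$'' is not even a well-defined evolution without additional argument.
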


As mentioned in Section \ref{sec:intro}, it is natural to strenghten the statement of Yau's Conjecture in two ways. Firstly, Conjecture \ref{conj:Yau} does not specify whether $\mathfrak{b}_0$ equals the balanced class of the solution $\mathfrak{b}$ defined in \eqref{eq:balancedclass}. Hence, it is desirable that a complete answer to Conjecture \ref{conj:Yau} has control on the balanced class, producing a solution of the Hull-Strominger with $\mathfrak{b}_0 = \mathfrak{b}$. Secondly, as originally formulated in \cite{Yau2006}, the connection $\nabla$ in \eqref{eq:HSintro} is not specified in the statement of Conjecture \ref{conj:Yau}. Here, we propose to take $\nabla$ satisfying the Hermitian-Yang-Mills equations \eqref{eq:HYMintro}, both for its physical and geometrical significance (see Section \ref{sec:intro}).

Our approach to the existence problem for the Hull-Strominger system, with $\nabla$ satisfying \eqref{eq:HYMintro}, lead us to consider holomomorphic vector bundle structures $V_0$ on $T^{1,0}$ which are polystable with respect to the balanced class $\mathfrak{b}$. For special choices of $V$ and $V_0$, however, one may find solutions of the Hermitian-Einstein equation such that $\tr R_\nabla \wedge R_\nabla = \tr F_h \wedge F_h$, which will obstruct the existence of solutions to the Bianchi identity when $X$ is a non-K\"ahler manifold. As we will see more explicitly in the next section, this motivates the statement of Question \ref{question}. 

\begin{remark}
Observe that an affirmative answer to Question \ref{question} provides, in particular, a solution to Conjecture \ref{conj:Yau} with the ansatz \eqref{eq:HYMintro} (see Remark \ref{rem:HSabstract}). It is an open question whether, assuming that the holomorphic tangent bundle $T^{1,0}$ of $X$ is $\mathfrak{b}$-polystable, one can reduce Yau's Conjecture \ref{conj:Yau} for $\nabla$ the Chern connection of $g$, as proposed in \cite{FuYau2,FuYau,Yau2}, to Question \ref{question}.
\end{remark}

\subsection{The genericity and $h^{0,1}_{\dbar}(X) \neq 0$ conditions}\label{sec:Yaustrong}

The aim of this section is to explain the necessity of the technical conditions in the statement of Question \ref{question}, namely, the genericity of the holomomorphic vector bundle structure $V_0$ and $h^{0,1}_{\dbar}(X) \neq 0$. These are motivated by the following result.

\begin{proposition}\label{prop:V0h01}
Let $(X,\Omega)$ be a compact Calabi-Yau threefold endowed with a balanced class $\mathfrak{b}$. Assume that $X$ does not admit any K\"ahler metric. Let $L \to X$ be a holomorphic line bundle on $X$ with vanishing first Chern class $c_1(L) = 0$. Let $V_0$ be a holomorphic bundle structure on $T^{1,0}$ which is polystable with respect to $\mathfrak{b}$. Then, $(X,\Omega,V_0\otimes L)$ does not admit a solution of the Hull-Strominger system \eqref{eq:HSintro} with balanced class $\mathfrak{b}$, such that $\nabla$ is the Chern connection of a Hermitian-Einstein metric $h_0$ on $V_0$.
\end{proposition}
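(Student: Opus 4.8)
The plan is to argue by contradiction. Suppose $(X,\Omega,V_0\otimes L)$ admits a solution $(g,h)$ of \eqref{eq:HSintro} with balanced class $\mathfrak b$ and with $\nabla$ the Chern connection of a Hermitian-Einstein metric $h_0$ on $V_0$. I will show that the Bianchi identity then forces $dd^c\omega=0$, and that a conformally balanced Hermitian metric on a compact complex threefold satisfying $dd^c\omega=0$ must be K\"ahler; this contradicts the assumption that $X$ carries no K\"ahler metric.

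First I reduce the Bianchi identity. Since $c_1(L)=0$, the line bundle $L$ admits a flat Hermitian metric $h_L$, so $F_{h_L}=0$. Writing $V=V_0\otimes L$ and using the identification $\End V\cong\End V_0$, the product metric $h_0\otimes h_L$ is Hermitian-Einstein on $V$ with curvature equal to $F_{h_0}$. As $V_0$, and hence $V$, is $\mathfrak b$-polystable, any two Hermitian-Einstein metrics on $V$ differ by a holomorphic automorphism, under which the characteristic form $\tr(F\wedge F)$ is invariant by conjugation-invariance of the trace. Therefore the metric $h$ in the solution satisfies $\tr F_h\wedge F_h=\tr F_{h_0}\wedge F_{h_0}=\tr R_\nabla\wedge R_\nabla$, the last equality because $R_\nabla=F_{h_0}$. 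Substituting into the third equation of \eqref{eq:HSintro}, the two curvature terms cancel identically and we obtain $dd^c\omega=0$, i.e. $\partial\dbar\omega=0$.

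It remains to prove the rigidity statement. Set $f=\|\Omega\|_\omega>0$. The conformally balanced equation $d(f\omega^2)=0$, split by type, gives $\partial f\wedge\omega^2+2f\,\partial\omega\wedge\omega=0$. Decomposing the $(2,1)$-form $\partial\omega=\gamma+\omega\wedge\beta$ into its primitive part $\gamma$ (with $\gamma\wedge\omega=0$) and a $(1,0)$-form $\beta$, this relation identifies $\beta=-\tfrac12\partial\log f$. Integrating $\dbar\bigl(f\,\omega\wedge\partial\omega\bigr)$ over $X$ and using $\dbar\partial\omega=-\partial\dbar\omega=0$ together with Stokes yields
\begin{equation*}
\int_X f\,\partial\omega\wedge\dbar\omega=\int_X\dbar f\wedge\omega\wedge\partial\omega.
\end{equation*}
On the left the cross terms vanish by primitivity, leaving $\partial\omega\wedge\dbar\omega=\gamma\wedge\bar\gamma+\beta\wedge\bar\beta\wedge\omega^2$; on the right only the trace part survives, giving $\dbar f\wedge\omega\wedge\partial\omega=\dbar f\wedge\beta\wedge\omega^2$. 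The Hodge--Riemann relations in complex dimension three give, up to a common positive constant, $\gamma\wedge\bar\gamma=i\,|\gamma|^2\vol$ for primitive $(2,1)$-forms and $\beta\wedge\bar\beta\wedge\omega^2=-i\,|\beta|^2\vol$, so after substituting $\beta=-\tfrac12\partial\log f$ and dividing by $i$ the identity becomes
\begin{equation*}
\int_X f\,|\gamma|^2\,\vol=-\int_X f\,|\beta|^2\,\vol.
\end{equation*}
Since $f>0$ and both integrands are nonnegative, each side vanishes, whence $\gamma\equiv0$ and $\beta\equiv0$. Thus $\partial\omega=0$, hence $d\omega=0$, so $\omega$ is K\"ahler, contradicting the hypothesis.

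The main obstacle is the rigidity in the last step, specifically arranging the Hodge--Riemann signs so that the integrated identity reads (nonnegative) $=$ (nonpositive). The mechanism is that the primitive and trace components of $\partial\omega$ enter $\partial\omega\wedge\dbar\omega$ with opposite signs, and the conformally balanced hypothesis is exactly what pins the trace component to the exact form $-\tfrac12\partial\log\|\Omega\|_\omega$, turning the right-hand side into a manifestly nonpositive quantity. By comparison, the reduction of the Bianchi identity is routine; its only delicate input is the existence of the flat metric $h_L$, which uses $c_1(L)=0$ interpreted as the vanishing of the Bott--Chern class of $L$.
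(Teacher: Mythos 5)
Your proof is correct and follows essentially the same route as the paper: reduce the Bianchi identity to $dd^c\omega=0$ using a flat metric on $L$ (which exists since $c_1(L)=0$ in Bott--Chern cohomology) and the uniqueness of Hermitian--Einstein metrics up to holomorphic gauge, then conclude that a compact conformally balanced pluriclosed threefold is K\"ahler. The only difference is that the paper simply cites Ivanov--Papadopoulos \cite{IvPa} for this last rigidity step, whereas you supply a correct self-contained proof of it via the Lefschetz decomposition of $\partial\omega$ and an integration by parts.
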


\begin{proof}
Assume that $(g,h,\nabla)$ is such a solution, for $\nabla$ the Chern connection of $h_0$. Let $h_L$ be a flat metric on $L$. Then, $h_0 \otimes h_L$ is a Hermitian-Einstein metric on $V_0\otimes L$, and therefore there exists a holomorphic gauge transformation taking $h$ to $h_0 \otimes h_L$. In particular, since $h_L$ is flat, one has
$$
\tr F_h^2 = \tr F_{h_0}^2
$$
and therefore $dd^c\omega = 0$. From this, $g$ is both conformally balanced and pluriclosed, and hence it must be K\"ahler (\cite{IvPa}), contradicting our assumptions. 
\end{proof}

In a non-K\"ahler manifold $X$ with $h^{0,1}_{\dbar}(X) \neq 0$, the previous result provides continuous families of pairs $(V_0,V)$ for which there cannot be solutions of the Hull-Strominger system with the ansatz \ref{eq:HYMintro}. In particular, one can always make the non-generic choice $V=V_0$, which obstructs the existence of solutions. For the sake of concreteness, we discuss two examples below, which slightly generalize the previous situation.

\begin{example}
Let $(X,\Omega)$ be a compact Calabi-Yau threefold, given by the total space of a non-trivial $T^2$-principal bundle over a  $K3$ surface $S$, as considered in \cite{FuYau,Ga2}. Then, $X$ is non-K\"ahler, balanced, and furthermore has $h^{0,1}_{\dbar}(X) = 1$. By \cite[Lemma 2.1]{Ga2}, there exists a balanced class $\mathfrak{b}$ such that the moduli space of $\mathfrak{b}$-polystable holomorphic structures $V_0$ on $T^{1,0}$ is non-empty. Then, for any choice of such $V_0$ and holomorphic line bundles $L_0, \ldots, L_r$ on $X$ with $c_1(L_j) = 0$, we define
$$
V = V_0 \otimes L_0 \bigoplus (\oplus_{j=1}^r L_j).
$$
Arguing now as in the proof of Proposition \ref{prop:V0h01}, it follows that $(X,\Omega,V)$ does not admit a solution of the system \eqref{eq:HSintro} with balanced class $\mathfrak{b}$, such that $\nabla$ is the Chern connection of a Hermitian-Einstein metric $h_0$ on $V_0$.
\end{example}

Our next example considers the existence problem for the Hull-Strominger system on nilmanifolds, as in the seminal paper \cite{FeIvUgVi}.

\begin{example}\label{ex:noHSNil}
Let $(X,J)$ be a non-K\"ahler compact balanced nilmanifold of complex dimension $3$ with left-invariant complex structure $J$ and trivial canonical bundle. Recall that $X$ is diffeomorphic to a quotient $\Gamma\backslash G$, of a simply-connected nilpotent Lie group $G$ by a co-compact lattice $\Gamma$ of maximal rank. Then, by the classification in \cite[Proposition 2.3]{UgVi}, it follows that  $h^{0,1}_{\dbar}(X) \neq 0$. The smooth tangent bundle is trivial, and we take the holomorphic structure on $T^{1,0} \cong X \times \CC^3$ to be a direct sum of holomorphic line bundles 
$$
V_0 = L_1^0 \oplus L_2^0 \oplus L_3^0
$$
with $c_1(L_j^0) = 0$. Consider the rank-$r$ holomorphic vector bundle
$$
V= \oplus_{j=1}^r L_j,
$$
with $L_j$ holomorphic line bundles with $c_1(L_j) = 0$. Then, $V_0$ and $V$ are both $\mathfrak{b}$-polystable with respect to any balanced class $\mathfrak{b} \in H^{2,2}_{BC}(X,\RR)$. Furthermore, the Hermitian-Einstein metrics with respect to any balanced metric are flat. Arguing now as in the proof of Proposition \ref{prop:V0h01}, it follows that, for any given $\mathfrak{b}$, $(X,\Omega,V)$ does not admit a solution of the Hull-Strominger system \eqref{eq:HSintro} with balanced class $\mathfrak{b}$, such that $\nabla$ is the Chern connection of a Hermitian-Einstein metric $h_0$ on $V_0$.
\end{example}

One can consider other non-generic choices of holomorphic vector bundles which do not admit solutions of the equations, as for instance $V = V_0^*$ or $V = W \oplus W'$ and $V_0 = W^* \oplus W'$, for some choice of polystable bundles $W$ and $W'$ on $X$. We consider an interesting explicit situation in the next example. 

\begin{example} Let $(X,\Omega)$ be the Calabi-Yau compact threefold given by $X=SL(2,\mathbb{Z}[i])\backslash SL(2,\mathbb{C})$. This complex manifold admits a global frame of $T^*_{1,0}$ induced by left-invariant forms on $SL(2,\mathbb{C})$, which satisfy the structure equations
$$
d\omega_1=\omega_{23}, \; \; d\omega_{2}=-\omega_{13}, \; \; d\omega_{3}=\omega_{12}
$$
and such that $\Omega=\omega_{123}$. Explicitly, this frame is dual to the frame given by left-translation of the elements in $T_{[1]}X=\mathfrak{sl}(2,\mathbb{C})$:
$$
X_1=\left(
\begin{array}{c c}
0 & i/2\\
i/2 & 0
\end{array}
\right), \; \; X_2=\left(
\begin{array}{c c}
0 & 1/2\\
-1/2 & 0
\end{array}
\right), \; \; X_3=\left(\begin{array}{c c}
i/2 & 0\\
1 & -i/2
\end{array}
\right).
$$
We fix the balanced class $\mathfrak{b}$ of the hermitian metric
$$
\omega_0=\frac{i}{2}(\omega_{1\overline{1}}+\omega_{2\overline{2}}+\omega_{3\overline{3}}).
$$
Let $W$ be the holomorphic vector bundle on $X\times \mathbb{C}^2$ with Dolbeault operator given by
$$
\overline{\partial}_{W}=\overline{\partial}+\sum_{i=1}^3\omega_{\overline{i}}\otimes X_i
$$ 
and let $V=W^{\oplus 4}$. The integrability $\overline{\partial}_{W}^2 = 0$ boils down to the fact that we are using the standard representation of $\mathfrak{sl}(2,\mathbb{C})$ above for the matrix-valued $(0,1)$-forms of the operator.

Similarly, let $V_0$ be the holomorphic vector bundle on $X\times \mathfrak{sl}(2,\mathbb{C})$ with Dolbeault operator induced by the adjoint representation of $\mathfrak{sl}(2,\mathbb{C})$:
$$
\overline{\partial}_{V_0}=\overline{\partial}+\sum_{i=1}^3 \omega_{\overline{i}}\otimes [X_i,\cdot].
$$
By definition, $V_0$ is the associated bundle to the $SL(2,\mathbb{C})$-principal bundle 
of frames of $W$, via the adjoint representation
$$
\rho : SL(2,\mathbb{C}) \rightarrow GL(\mathfrak{sl}(2,\mathbb{C})), \; \; g \mapsto \mathrm{Ad}_g.
$$
In other words, $V_0\cong \mathrm{End}_0 \ W$, where $\mathrm{End}_0$ stands for null-trace endomorphisms. Interestingly, one can prove that $V_0$ is isomorphic to $(T^{1,0},(\nabla^B)^{0,1})$, where $\nabla^B$ denotes the Bismut connection of $\omega_0$ (cf. \cite{FeYa}).

Next, we observe that the standard Hermitian metric on $\mathbb{C}^2$ produces a Hermitian metric $h_W$ on $W$ whose Chern curvature is given by 
$$
F_{h_W}=\left(\begin{array}{c c}
-\tfrac{i}{2}(\omega_{1\overline{2}}-\omega_{2\overline{1}}) & \tfrac{1}{2}((\omega_{1\overline{3}}-\omega_{3\overline{1}})-i(\omega_{2\overline{3}}-\omega_{3\overline{2}}))\\
-\tfrac{1}{2}((\omega_{1\overline{3}}-\omega_{3\overline{1}})+i(\omega_{2\overline{3}}-\omega_{3\overline{2}})) & \tfrac{i}{2}(\omega_{1\overline{2}}-\omega_{2\overline{1}})
\end{array}\right).
$$
It is a straightforward computation to check that
$$
F_{h_W}\wedge \omega_0^2=0.
$$
Therefore, the bundle $W$ is $\mathfrak{b}$-polystable, and so are $V_0$ and $V$.

We now consider the Hull-Strominger system \eqref{eq:HSintro} with the ansatz \eqref{eq:HYMintro} for the bundles $V_0$ and $V$ as above. Suppose $(\omega',h_0,h)$ is a solution with balanced class $\mathfrak{b}$ (with $\nabla$ the Chern connection of $h_0$ on $V_0$). Let $\tilde h_W$ be a Hermitian-Einstein metric on $W$ with respect to $\omega'$. By uniqueness of Hermitian-Einstein metrics, there is a holomorphic gauge transformation $u\in \mathrm{Aut}(V)$ such that $u h = \tilde h_W^{\oplus 4}$, and therefore $\mathrm{tr}_V F_{h}^2 = 4\mathrm{tr}_W F_{\tilde h_W}^2$. Similarly, the Chern connection of $h_0$ is related via a holomorphic gauge tranformation to the connection $\rho_*D^{\tilde h_W}$, induced by the Chern connection $D^{\tilde h_W}$ of $\tilde h_W$ via $\rho$. Then, we have
$$
\mathrm{tr}_{V_0} F_{h_0}^2 = \mathrm{tr}_{V_0}(\rho\circ F_{\tilde h_W})^2=4\mathrm{tr}_W F_{\tilde h_W}^2 
$$
where the last step boils down to checking:
$$
\mathrm{tr}_{\mathfrak{sl}(2,\mathbb{C})}(\rho(A)^2)=4\mathrm{tr}_{\mathbb{C}^2}(A^2), \; \; A\in SL(2,\mathbb{C}).
$$
Therefore, arguing as in the previous examples, we conclude that $\omega'$ is K\"ahler, reaching a contradiction since $X$ admits no K\"ahler metrics. 
\end{example}


We finish this section with an example that illustrates a different potential obstruction to the existence of solutions for non-K\"ahler manifolds, related to the positivity of the solutions of the Bianchi identity. In fact, for this example we are not able to decide whether there exists a solution of the Hull-Strominger system with the ansatz \eqref{eq:HYMintro}, and speculate that it may yield a negative asnwer to Question \ref{question}.

\begin{example}\label{ex:h-19}
We go back to the situation of Example \ref{ex:noHSNil}, for a compact Calabi-Yau nilmanifold $(X,\Omega)$ with underlying nilpotent Lie algebra $\mathfrak{h}^-_{19}$, considered in \cite[Section 8]{FeIvUgVi}. This complex manifold admits a global frame of $T^*_{1,0}$ induced by left-invariant forms, satisfying the structure equations
$$
d\omega_1=0, \; \; d\omega_2=\omega_{13}+\omega_{1\overline{3}}, \; \; d\omega_3=i(\omega_{1\overline{2}}-\omega_{2\overline{1}}),
$$
and such that $\Omega=\omega_{123}$. The most general $d$-closed, purely imaginary, $(1,1)$-form on $X$ induced by left-invariant forms is given by
$$
F=\pi(m\omega_{1\overline{1}}+ni(\omega_{1\overline{2}}-\omega_{2\overline{1}})),
$$
for $m,n \in \RR$. For a suitable choice of lattice, one can show that, for any $(m,n)\in \mathbb{Z}^{2}$, $\tfrac{i}{2\pi} F$ has integral periods and hence, by general theory, this is the curvature form of the Chern connection of a holomorphic hermitian line bundle $(L,h)\rightarrow X$. 

We fix the balanced Hermitian form
$$
\omega=\tfrac{i}{2}(\omega_{1\overline{1}}+\omega_{2\overline{2}}+\omega_{3\overline{3}})
$$
and consider the associated balanced class $\mathfrak{b}$. With the previous notation, one can easily see that $c_1(L) \cdot \mathfrak{b} = 0$ when $m=0$. Hence, for a choice of integers $n_j \in \mathbb{Z}\backslash \{0\}$, $j = 1, \ldots, r$, with associated line bundles $L_j$ as above, the holomorphic vector 
bundle
$$
V= \bigoplus_{j=1}^r L_j
$$
is polystable with respect to $\mathfrak{b}$.

We take a holomorphic structure $V_0$ on $T^{1,0}$, with Dolbeault operator of the form 
$$
\overline{\partial}^\lambda_{\overline{X_i}} X_j=\sum_{k=1}^3 \lambda_{ijk}X_k,
$$
where $\{X_i\}_{i=1,2,3}$ is the dual frame of $\{\omega_i\}_{i=1,2,3}$, and $\lambda_{i,j,k}$ are constant complex functions. Assuming that $V_0$ is polystable with respect to $\mathfrak{b}$, one can prove that
$$
V_0\cong L_1^0\oplus L_2^0\oplus L_3^0
$$
for $L_i^0$ line bundles with $c_1(L_i^0)=0$.

With this setup, we consider the Hull-Strominger system \eqref{eq:HSintro} with coupling constant $\alpha \in \RR$ and the ansantz \eqref{eq:HYMintro}, that is, for triples $(\omega',\oplus_{j=1}^3 \tilde h_j^0,\oplus_{j=1}^r \tilde h_j)$ with $\omega'$ a Hermitian form on $X$ and $\tilde h_j^0$ (resp. $\tilde h_j$) a Hermitian metric on $L^0_j$ (resp. $L_j$). Provided that we have a solution, it is clear, in particular, that the Hermitian-Einstein metrics $\tilde{h_j^0}$ with respect to $\omega'$ are flat. Assume first that $\alpha > 0$. Then, by \cite{CHSW} (see also \cite[Proposition 2.14]{GFGM}), any solution must satisfy $d \omega' = 0$ and $F_{\tilde{h}_j} = 0$, in contradiction with our assumptions. In the case $\alpha=0$, any solution is again K\"ahler by \cite{IvPa}.

In the remaining case of $\alpha<0$, we assume that our triples $(\omega',\oplus_{j=1}^3 \tilde h_j^0,\oplus_{j=1}^r \tilde h_j)$ are such that $\omega'$, $F_{\tilde h_j}$ and $F_{\tilde h_j^0}$ are invariant $(1,1)$ forms on $X$. Then, it follows that $F_{\tilde h_j} = F_{h_j}$ and $F_{\tilde h_j^0} = 0$, and hence the Bianchi identity reduces to
$$
dd^c\omega' = - \alpha \sum_{j=1}^r F_{h_j}^2 = -2\alpha\pi^2\sum_{j=1}^r n_j^2\omega_{12\overline{12}}.
$$
One can prove that the general solution of the previous equation is given by
\begin{align*}
\omega' = -\frac{\alpha\pi^2}{2}(& \sum_{j=1}^r
n_j^2 \omega_{3\overline{3}} + s_1 i \omega_{1\overline{1}}+s_2(\omega_{1\overline{2}}-\omega_{2\overline{1}})+s_3 i(\omega_{1\overline{2}}+\omega_{2\overline{1}})+\\
& +s_4(\omega_{1\overline{3}}-\omega_{3\overline{1}})+s_5 i(\omega_{1\overline{3}}+\omega_{3\overline{1}})+s_6(\omega_{2\overline{3}}-\omega_{3\overline{2}})+s_7 i(\omega_{3\overline{2}}+\omega_{2\overline{1}}))
\end{align*}
where $s_i$ are real constants and hence, since the component in $\omega_{2\overline{2}}$ vanishes, it follows that $\omega'$ is necessarily non-positive. This proves that, among the invariant solutions of the Bianchi identity for this choice of $(V,V_0)$, there are no potential solutions of the Hull-Strominger system \eqref{eq:HSintro}, since the corresponding $\omega'$ is not a Hermitian metric.
\end{example}

\section{Background on holomorphic Courant algebroids}\label{sec:generalizedHerm}

\subsection{Holomorphic Courant algebroids}

Let $X$ be a complex manifold. We denote by $\mathcal{O}_X$ and $\underline{\mathbb{C}}$ the sheaves of holomorphic functions and $\mathbb{C}$-valued locally constant functions on $X$, respectively. We denote by $T^{1,0}$ and $T_{1,0}^*$ the holomorphic tangent and cotangent bundles of $X$, respectively.

\begin{definition}\label{d:CAhol} 
A \emph{holomorphic Courant algebroid} is a holomorphic vector bundle $\mathcal{Q} \to X$ together with a nondegenerate holomorphic symmetric bilinear form $\IP{,}$, a holomorphic vector bundle morphism $\pi:\mathcal{Q}\to T^{1,0}$ called anchor map, and a Dorfman bracket on holomorphic sections of $\mathcal{Q}$, that is, a homomorphism of sheaves of $\underline{\mathbb{C}}$-modules
$$
[ \cdot,\cdot ] \colon \mathcal{Q} \otimes_{\underline{\mathbb{C}}} \mathcal{Q} \to \mathcal{Q},
$$
satisfying, for $u,v,w\in \mathcal{Q}$ and $\phi\in \mathcal{O}_X$,
  \begin{itemize}
  \item[(D1):] $[u,[v,w]] = [[u,v],w] + [v,[u,w]]$,
  \item[(D2):] $\pi([u,v])=[\pi(u),\pi(v)]$,
  \item[(D3):] $[u,\phi v] = \pi(u)(\phi) v + \phi[u,v]$,
  \item[(D4):] $\pi(u)\IP{v,w} = \IP{[u,v],w} + \IP{v,[u,w]}$,
  \item[(D5):] $[u,v]+[v,u]= \mathcal{D}\IP{u,v}$,
  \end{itemize}
where $\mathcal{D} \colon \mathcal{O}_X \to \mathcal{Q}$ denotes the composition of the exterior differential, the natural map $\pi^* \colon T^*X \to \mathcal{Q}^*$, and the isomorphism $\mathcal{Q}^* \to\mathcal{Q}$ provided by $\IP{\cdot,\cdot}$.
\end{definition}

We will denote a holomorphic Courant algebroid $(\mathcal{Q},\IP{,},[\cdot,\cdot],\pi)$ simply by $\mathcal{Q}$. Using the isomorphism $\IP{,} \colon \mathcal{Q} \to \mathcal{Q}^*$ we obtain a complex of holomorphic vector bundles
\begin{equation}\label{eq:holCouseqaux}
T^*_{1,0} \overset{\pi^*}{\longrightarrow} \mathcal{Q} \overset{\pi}{\longrightarrow} T^{1,0}.
\end{equation}

\begin{definition}\label{d:CAholexact} 
We will say that $\mathcal{Q}$ is \emph{transitive} if the anchor map $\pi$ in \eqref{eq:holCouseqaux} is surjective.
\end{definition}

Given a transitive holomorphic Courant algebroid $\mathcal{Q}$ over $X$, there is an associated holomorphic Lie algebroid 
$$
A_\mathcal{Q} := \mathcal{Q}/(\Ker \pi)^\perp.
$$ 
Furthermore, the holomorphic subbundle
$$
\ad_\mathcal{Q} := \Ker \pi/(\Ker \pi)^\perp \subset A_\mathcal{Q}
$$
inherits the structure of a holomorphic bundle of quadratic Lie algebras. Therefore, the bundle $\mathcal{Q}$ fits into a double extension of holomorphic vector bundles
\begin{equation}\label{eq:holCoustr}
\begin{split}
0 \longrightarrow T^*_{1,0} \overset{\pi^*}{\longrightarrow} \mathcal{Q} \longrightarrow A_\mathcal{Q} \longrightarrow 0\\
0 \longrightarrow \ad_\mathcal{Q} \overset{\pi^*}{\longrightarrow} A_\mathcal{Q} \overset{\pi}{\longrightarrow} T^{1,0} \longrightarrow 0.
\end{split}
\end{equation}
A classification of transitive holomorphic Courant algebroids has been obtained in \cite[Proposition 2.9]{GaRuTi2} in the special case that $A_\mathcal{Q}$ is isomorphic to the Atiyah algebroid of a holomorphic principal bundle. A holomorphic Courant algebroid of this form is said  to be of \emph{string type}. Rather than explaining this complicated classification, which we will not directly use here, we briefly discuss the basic example which follows from it.

\begin{example}\label{def:Q0}
Let $G$ be a complex Lie group with Lie algebra $\mathfrak{g}$. We assume that $\mathfrak{g}$ is endowed with a non-degenerate bi-invariant symmetric bilinear form
$$
\IP{,}: \mathfrak{g} \otimes \mathfrak{g} \to \mathbb{C}.
$$
Let $p \colon P \to X$ be a holomorphic principal $G$-bundle over $X$.
Consider the holomorphic Atiyah Lie algebroid $A_{P} := T^{1,0}P/G$ of $P$, with  bracket induced by the Lie bracket on $T^{1,0}P$. The holomorphic bundle of Lie algebras 
$
\ad P:= \Ker dp \subset A_{P}
$
fits into the short exact sequence of holomorphic Lie algebroids
$$
0 \to \ad P \to A_{P} \to T^{1,0} \to 0.
$$
We construct next a transitive holomorphic Courant algebroid such that the second sequence in \eqref{eq:holCoustr} is canonically isomorphic to the exact sequence of holomorphic Lie algebroids above. For this, we assume the existence of a pair $(\tau,\theta)$ as follows: $\theta$ is a principal connection on the smooth $G$-bundle underlying $P$ whose curvature $F_\theta$ satisfies 
$$
F_\theta^{0,2} = 0
$$
and whose $(0,1)$-part induces the holomorphic structure on $P$, while $\tau \in \Omega^{3,0} \oplus \Omega^{2,1}$ is a complex three-form such that
\begin{equation}\label{eq:Bianchihol}
d\tau - \IP{F_\theta \wedge F_\theta} = 0.
\end{equation}
Here, $\Omega^{p,q}$ denotes the space of smooth complex forms on $X$ of type $(p,q)$. Given such a pair $(\tau,\theta)$, we define a holomorphic Courant algebroid $\mathcal{Q}_{P,\tau,\theta}$ with underlying smooth complex vector bundle
\begin{equation*}
T^{1,0} \oplus\ad P\oplus T_{1,0}^*,
\end{equation*}
with Dolbeault operator
\begin{equation}\label{eq:DolQ}
\dbar_0 (V + r + \xi)  = \dbar V + i_V F_\theta^{1,1} + \dbar^\theta r + \dbar \xi - i_{V}\tau^{2,1} + 2\IP{ F_\theta^{1,1}, r}
\end{equation}
non-degenerate symmetric bilinear form
$$
\IP{V + r + \xi , V + r + \xi}_0  = \xi(V) + \IP{r,r},
$$
bracket,
\begin{equation*}
	\begin{split}
	[V+ r + \xi,W + t + \eta]_0   = {} & [V,W] - F^{2,0}_\theta(V,W) + \partial^\theta_V t - \partial^\theta_W r - [r,t]\\
	& {} + i_V \partial \eta + \partial (\eta(V)) - i_W\partial \xi + i_Wi_V \tau^{3,0},\\
	& {} + 2\IP{\partial^\theta r, t} + 2\IP{i_V F_\theta^{2,0}, t} - 2\IP{i_W F_\theta^{2,0}, r},	
	\end{split}
\end{equation*}
and anchor map $\pi_0(V+ r + \xi) = V$. It is an exercise to show that $\mathcal{Q}_{P,\tau,\theta}$ defines a transitive holomorphic Courant algebroid in the sense of Definition \ref{d:CAholexact} (see \cite[Proposition 2.4]{GaRuTi2}).
\end{example}

\begin{remark}
Notice that, in particular, condition \eqref{eq:Bianchihol} implies the vanishing of the \emph{first Pontryagin class}
\begin{equation}\label{eq:p1(P)=0dRC}
p_1(P) = 0 \in H^4_{dR}(X,\mathbb{C}),
\end{equation}
where $p_1(P)$ denotes the invariant associated to the pairing $\IP{,}$ on $\mathfrak{g}$ via Chern-Weyl Theory. When the complex manifold satisfies the $\partial \dbar$-Lemma, the condition \eqref{eq:p1(P)=0dRC} is in fact sufficient for the construction of a transitive holomorphic Courant algebroid of string type, as in Example \ref{def:Q0} (see \cite[Corollary 3.6]{GaRuTi2}). 
\end{remark}

\subsection{Reduction}\label{s:red}

The holomorphic Courant algebroids of our main interest arise via a reduction mechanism from a special type of Courant algebroids in the smooth category. Recall that a Courant algebroid over a smooth manifold $M$ is given by a real orthogonal bundle $(E,\IP{,})$, a bracket $[,]$ on smooth sections, and a bracket preserving morphism $\pi \colon E \to T:= TM$ satisfying a set of axioms analogous to those in Definition \ref{d:CAhol}. In the sequel, a Courant algebroid over $M$ will be denoted simply by $E$.
 
Let $E$ be a smooth Courant algebroid over a smooth manifold $M$. We assume that $M$ is endowed with an integrable complex structure $J$. Consider the smooth complex Courant algebroid $ E \otimes \mathbb{C}$, with Courant structure given by the $\mathbb{C}$-linear extensions of the symmetric bilinear form  $\IP{,}$, the bracket $[\cdot, \cdot]$, and the anchor map $\pi$. We recall next the notion of lifting, which will enable us to construct a holomorphic Courant algebroid out of $E \otimes \mathbb{C}$.

\begin{definition}\label{def:lifting}
Let $E$ be a smooth Courant algebroid over a complex manifold $X = (M,J)$. A \emph{lifting} of $T^{0,1}$ to $E \otimes \mathbb{C}$ is an isotropic, involutive subbundle $\ell \subset E \otimes \mathbb{C}$ mapping isomorphically to $T^{0,1}$ under the $\mathbb{C}$-linear extension of the anchor map $\pi \colon E \otimes \mathbb{C} \to T \otimes \mathbb{C}$.
\end{definition}

Given a lifting $\ell$ of $T^{0,1}$ to $E \otimes \mathbb{C}$, following \cite{GualtieriGKG} we consider the reduction of $E \otimes \mathbb{C}$ by $\ell$ given by the orthogonal bundle 
$$
\mathcal{Q}_\ell := \ell^\perp/\ell,
$$
where $\ell^\perp$ is the orthogonal complement of $\ell$ with respect to the symmetric pairing on $E \otimes \mathbb{C}$. Since $\ell$ is a lifting of $T^{0,1}$, the kernel of $\pi_{|\ell^\perp}$ contains $T^*_{1,0}$, and therefore $\mathcal{Q}_\ell$ fits in a vector bundle complex of the form \eqref{eq:holCouseqaux}. The Dolbeault operator on $\mathcal{Q}_\ell$ is defined as follows: given $s$ a smooth section of $\mathcal{Q}_\ell$, we define
$$
\overline{\partial}^{\ell}_V s = [\tilde V,\tilde s] \quad \textrm{mod}\ \ell
$$
where $V \in T^{0,1}$, $\tilde V$ is the unique lift of $V$ to $\ell$, and $\tilde s$ is any lift of $s$ to a section of $\ell^\perp$. The Jacobi identity for the Dorfman bracket on $E \otimes \mathbb{C}$ implies that $\overline{\partial}^{\ell} \circ \overline{\partial}^{\ell} = 0$ and that it
induces a Dorfman bracket on the holomorphic sections of $\mathcal{Q}_\ell$. It is not difficult to prove that $\mathcal{Q}_\ell$ defines a holomorphic Courant algebroid in the sense of Definition \ref{d:CAholexact} (cf. \cite[Proposition 2.8]{GaRuTi3}).
 
In the sequel we will focus 
on a special type of smooth transitive Courant algebroids, called \emph{string}. This will help us to make the construction of $\mathcal{Q}_\ell$ above more explicit, recovering the construction in Example \ref{def:Q0}. Recall that a Courant algebroid $E$ is called transitive if the anchor map is surjective. In particular, such an object defines a double extension of smooth real vector bundles
\begin{equation}\label{eq:smoothCoustr}
\begin{split}
0 \longrightarrow T^* \overset{\pi^*}{\longrightarrow} E {\longrightarrow} A_E \longrightarrow 0,\\
0 \longrightarrow \ad_E {\longrightarrow} A_E {\longrightarrow} T \longrightarrow 0.
\end{split}
\end{equation}
Here, the Lie algebroids $A_E := E/(\Ker \pi)^\perp$ and $\ad_E = \Ker \pi/(\Ker \pi)^\perp$ are defined as in the holomorphic case. The basic example which we will need is as follows.

\begin{example}\label{def:E0}
Let $K$ be a real Lie group with Lie algebra $\mathfrak{k}$. We assume that $\mathfrak{k}$ is endowed with a non-degenerate bi-invariant symmetric bilinear form
$$
\IP{,}: \mathfrak{k} \otimes \mathfrak{k} \to \mathbb{R}.
$$
Let $p \colon P_K \to M$ be a smooth principal $K$-bundle over $M$. Consider the Atiyah Lie algebroid $A_{P_K} := TP_K/K$. The smooth bundle of Lie algebras 
$
\ad P_K:= \Ker dp \subset A_{P_K}
$
fits into the short exact sequence of Lie algebroids
$$
0 \to \ad P_K \to A_{P_K} \to T \to 0.
$$
We construct next a transitive Courant algebroid such that the second sequence in \eqref{eq:holCoustr} is canonically isomorphic to the exact sequence of Lie algebroids above. For this, we assume that
$$
p_1(P_K) = 0 \in H^4_{dR}(M,\mathbb{R}),
$$
where $p_1(P)$ denotes \emph{first Pontryagin class} of $P_K$ associated to the bi-invariant pairing $\IP{,}$ on $\mathfrak{g}$ via Chern-Weyl Theory. Then, given a choice of principal connection $\theta$ on $P_K$ there exists a smooth real three-form $H \in \Omega^{3}$ such that
\begin{equation}\label{eq:Bianchireal}
dH - \IP{F_\theta \wedge F_\theta} = 0.
\end{equation}
Given such a pair $(H,\theta)$, we define a smooth Courant algebroid $E_{P_K,H,\theta}$ with underlying vector bundle
\begin{equation*}
T \oplus\ad P_K\oplus T^*,
\end{equation*}
non-degenerate symmetric bilinear form $\IP{,}_0$ and anchor map $\pi_0$ defined as in Example \ref{def:Q0}, and bracket given by
\begin{equation*}
	\begin{split}
	[V+ r + \xi,W + t + \eta]_0   = {} & [V,W] - F_\theta(V,W) + d^\theta_V t - d^\theta_W r - [r,t]\\
	& {} + L_V \eta - i_W d \xi + i_Wi_V H\\
	& {} + 2\IP{d^\theta r, t} + 2\IP{i_V F_\theta, t} - 2\IP{i_W F_\theta, r}.
	\end{split}
\end{equation*}
It is not difficult to see that $E_{P_K,H,\theta}$ defines a smooth transitive Courant algebroid over $M$, as defined above.
\end{example}

Transitive Courant algebroids as in Example \ref{def:E0} fit into the category of \email{smooth string algebroids} \cite{GaRuTi2}, which motivates the following definition.

\begin{definition}\label{d:CAstring} 
A smooth Courant algebroid $E$ over $M$ is of \emph{string type} if it is isomorphic to a Courant algebroid $E_{P_K,H,\theta}$ as in Example \ref{def:E0}, for some triple $(P_K,H,\theta)$ satisfying \eqref{eq:Bianchireal}.
\end{definition}

The notion of isomorphism which we use here is the standard one for smooth Courant algebroids, given by orthogonal bundle morphisms which preserve the bracket and the anchor map (cf. \cite[Definition 2.3]{GaRuTi2}).

The following result characterizes explicitly the liftings on a smooth Courant algebroid of string type. It follows easily from \cite[Lemma 2.15]{GaRuTi3} combined with the \emph{Chern correspondence} in \cite[Lemma 5.11]{GaRuTi3}, and hence we omit the proof. For the statement, we denote the space of real $(p,p)$-forms on a complex manifold by $\Omega^{p,p}_\RR$. Recall that, given a Courant algebroid $E$ as in Example \ref{def:E0}, a pair $(\gamma,\beta) \in \Omega^{2}_\CC \oplus \Omega^{1}_\CC(\ad \underline P_K)$ induces an orthogonal automorphism of $E \otimes \CC$  by
\begin{equation}\label{eq:BA}
(\gamma,\beta)(V + r + \xi) = V + i_V \beta + r + i_V \gamma - \IP{i_V\beta,\beta} - 2\IP{\beta,r} + \xi.
\end{equation}

\begin{lemma}[\cite{GaRuTi3}]\label{lemma:liftings}
Let $X$ be a complex manifold. Let $E_0 = E_{P_K,H_0,\theta_0}$ be the smooth Courant algebroid of string type in Example \ref{def:E0}. Then, a lifting $\ell \subset E_0 \otimes \mathbb{C}$  of $T^{0,1}$ is equivalent to a triple
	$$
	(\omega,b,a) \in \Omega^{1,1}_\RR \oplus \Omega^2 \oplus \Omega^{1}(\ad P_K)
	$$
	satisfying 
	\begin{equation}\label{eq:liftingcond}
	\begin{split}
	H_0 + d^c \omega - d b + 2 \IP{a\wedge F_{\theta_0}} + \IP{a\wedge d^{\theta_0} a } + \frac{1}{3} \IP{a\wedge [a\wedge a]} & = 0,\\
	F_{\theta_0}^{0,2} + \dbar^{\theta_0}a^{0,1} + \frac{1}{2}[a^{0,1}\wedge a^{0,1}] & = 0.\\
	\end{split}
	\end{equation}
More precisely, given $(\omega,b,a)$ satisfying \eqref{eq:liftingcond}, the lifting is
	\begin{equation}\label{eq:L}
	\ell(\omega,b,a) = \{(i\omega - b,-a)(V^{0,1}), \; V^{0,1} \in T^{0,1} \},
	\end{equation}
and, conversely, any lifting is uniquely expressed in this way. In particular, setting $\theta = \theta_0 + a$, equation \eqref{eq:liftingcond} implies that
\begin{equation}\label{eq:anomaly}
dd^c\omega + \IP{F_\theta \wedge F_\theta} = 0, \qquad F_\theta^{0,2} = 0.
\end{equation}
\end{lemma}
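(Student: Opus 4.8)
The plan is to realize a lifting as the graph of a bundle map over $T^{0,1}$ and to use the explicit orthogonal automorphisms \eqref{eq:BA} to split the problem into two pieces: isotropy, which will be automatic, and involutivity, which will produce the two equations in \eqref{eq:liftingcond}. Since $E_0 = T \oplus \ad P_K \oplus T^*$ and $\pi_0$ is the projection onto $T$, Definition \ref{def:lifting} says that a lifting $\ell$ is precisely a subbundle projecting isomorphically onto $T^{0,1}$, hence the graph of a map $\sigma \colon T^{0,1} \to (\ad P_K \oplus T^*)\otimes \CC$. First I would record that, for $(\gamma,\beta) = (i\omega - b, -a)$, the automorphism \eqref{eq:BA} sends the canonical isotropic lift $T^{0,1} \hookrightarrow E_0 \otimes \CC$ to exactly the graph $\ell(\omega,b,a)$ of \eqref{eq:L}, whose $\ad P_K$-component over $V^{0,1}$ is $-i_{V^{0,1}}a$ and whose $T^*$-component is $i_{V^{0,1}}(i\omega - b) - \IP{i_{V^{0,1}}a,a}$.

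The isotropy of $\ell(\omega,b,a)$ then comes for free: \eqref{eq:BA} is an orthogonal automorphism of $(E_0\otimes\CC,\IP{,}_0)$ and $T^{0,1}$ is isotropic, so its image is isotropic; a one-line polarization check confirms that the quadratic term $-\IP{i_{V^{0,1}}a,a}$ is exactly what cancels the contribution $\IP{i_{V^{0,1}}a,i_{W^{0,1}}a}$ of the $\ad P_K$-components. For the converse and the uniqueness, I would show that $(\omega,b,a)\mapsto \ell(\omega,b,a)$ is a bijection onto isotropic graphs: the $\ad P_K$-component of any graph only sees $a^{0,1}$, which recovers the real $a$; isotropy pins down the symmetric part of the $T^*$-component, leaving an antisymmetric $2$-form whose $(1,1)$- and $(0,2)$-parts are free, and these are matched bijectively by $\gamma^{1,1} = i\omega - b^{1,1}$ (any complex $(1,1)$-form, via the Hermitian/anti-Hermitian splitting) and $\gamma^{0,2} = -b^{0,2}$, with $b^{2,0}$ recovered by reality. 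This yields the asserted unique expression.

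The substance of the proof is involutivity. Since \eqref{eq:BA} preserves the Dorfman bracket only up to its twist, I would compute $[e,e']_0$ directly for two sections of $\ell(\omega,b,a)$ using the bracket of Example \ref{def:E0} with the data $(H_0,\theta_0)$, and impose that the result be the section of $\ell(\omega,b,a)$ lying over $[V^{0,1},W^{0,1}]\in T^{0,1}$ (which is involutive, as $J$ is integrable). The $\ad P_K$-component of the bracket collects $-F_{\theta_0}(V^{0,1},W^{0,1})$, the two $d^{\theta_0}$-terms, and $-[s,t]$; matching it to $-i_{[V^{0,1},W^{0,1}]}a^{0,1}$ is the standard curvature identity equivalent to $F_{\theta_0}^{0,2} + \dbar^{\theta_0}a^{0,1} + \tfrac12[a^{0,1}\wedge a^{0,1}]=0$, i.e.\ to $F_\theta^{0,2}=0$ for $\theta = \theta_0 + a$, which is the second line of \eqref{eq:liftingcond}. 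The $T^*$-component yields a $3$-form identity: the term $i_W i_V H_0$ contributes $H_0$; the $L_V\eta - i_W d\xi$ terms built from the $(i\omega-b)$-contractions contribute $-db$ together with $i\dbar\omega$ on the $(1,2)$-part; and the $\IP{\cdot,\cdot}$-terms, combined with the quadratic correction, assemble the cubic transgression $2\IP{a\wedge F_{\theta_0}} + \IP{a\wedge d^{\theta_0}a} + \tfrac13\IP{a\wedge[a\wedge a]}$. Since the $(0,1)$-bracket only tests the components with at least two antiholomorphic legs, it returns the $(1,2)$- and $(0,3)$-parts of the equation; reality of $H_0,\omega,b,a$ then supplies the conjugate $(2,1)$- and $(3,0)$-parts, and it is precisely this real completion that turns $i\dbar\omega$ into $d^c\omega$, producing the full first line of \eqref{eq:liftingcond}.

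Finally, \eqref{eq:anomaly} is formal. The identity $F_\theta^{0,2}=0$ is literally the second line of \eqref{eq:liftingcond}. For the first, I would differentiate the first line and use $d(db)=0$, $d(d^c\omega)=dd^c\omega$, the real Bianchi identity $dH_0=\IP{F_{\theta_0}\wedge F_{\theta_0}}$ from \eqref{eq:Bianchireal}, and the transgression formula $d\big(2\IP{a\wedge F_{\theta_0}} + \IP{a\wedge d^{\theta_0}a} + \tfrac13\IP{a\wedge[a\wedge a]}\big) = \IP{F_\theta\wedge F_\theta} - \IP{F_{\theta_0}\wedge F_{\theta_0}}$, after which the $\IP{F_{\theta_0}\wedge F_{\theta_0}}$ terms cancel and one is left with $dd^c\omega + \IP{F_\theta\wedge F_\theta}=0$. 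I expect the main obstacle to be the involutivity computation of the previous paragraph, specifically the bookkeeping that forces the precise coefficients $2,1,\tfrac13$ in the cubic term and that reconciles the $(0,1)$-tested identity with the full real $3$-form equation (so that $d^c\omega$, and not $i\,d\omega$, appears) via reality; by comparison the isotropy and the differentiation steps are routine.
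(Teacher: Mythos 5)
Your proposal is correct, and it reconstructs essentially the argument the paper is relying on: the paper itself omits the proof, deferring to \cite[Lemma 2.15]{GaRuTi3} and the Chern correspondence \cite[Lemma 5.11]{GaRuTi3}, where the lifting is likewise treated as an isotropic graph via the action \eqref{eq:BA}, with isotropy pinning down the $(\omega,b,a)$-parametrization and involutivity of the Dorfman bracket producing the curvature identity $F_\theta^{0,2}=0$ and the real three-form equation. Your identification of the delicate points (the Chern--Simons bookkeeping for the coefficients $2,1,\tfrac13$, and the passage from the $(1,2)+(0,3)$ components tested by involutivity to the full real equation via conjugation) matches where the actual work lies, and your derivation of \eqref{eq:anomaly} from \eqref{eq:Bianchireal} and the transgression formula is the standard one.
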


The next result, which follows from \cite[Proposition 2.16]{GaRuTi2}, gives an explicit formula for the holomorphic Courant algebroid associated to a lifting $\ell(\omega,b,a)$. For this, we use the identity
$$
\ell(\omega,b,a)^\perp = \ell(\omega,b,a) \oplus (i\omega -b,-a)(T^{1,0}  \oplus(\mathrm{ad} \underline{P}_K)\otimes \mathbb{C} \oplus T^*_{1,0}).
$$

\begin{lemma}\label{l:liftingQ} 
Let $X$ be a complex manifold endowed with a smooth Courant algebroid of string type $E_{P_K,H_0,\theta_0}$ as above, with $K$ compact. Let $\ell(\omega,b,a)$ be a lifting of $T^{0,1}$ as in Lemma \ref{lemma:liftings}. Then, using the notation in Example \ref{def:Q0}, there is a canonical isomorphism
$$
\mathcal{Q}_{\ell(\omega,b,a)} \cong \mathcal{Q}_{P,2 i\partial \omega,\theta},
$$
given by
$$
[(i\omega-b,-a)(X+r+\xi^{1,0})]\mapsto X^{1,0}+r+\xi^{1,0},
$$
where $P = (P_K \times_K G,\theta^{0,1})$ is the holomorphic principal $G$-bundle induced by $\theta$, for $G = K^c$ the complexification of $K$.
\end{lemma}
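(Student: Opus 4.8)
The plan is to check that the stated map intertwines all the structures of a holomorphic Courant algebroid: the underlying smooth complex bundle, the symmetric pairing, the anchor, the Dolbeault operator, and the holomorphic Dorfman bracket. I would first dispose of the three easy structures. The orthogonal decomposition of $\ell(\omega,b,a)^\perp$ recorded immediately before the statement shows that every class in $\mathcal{Q}_{\ell(\omega,b,a)} = \ell^\perp/\ell$ has a unique representative $(i\omega-b,-a)(X + r + \xi^{1,0})$ with $X \in T^{1,0}$, $r \in \ad P \otimes \mathbb{C}$ and $\xi^{1,0} \in T^*_{1,0}$; hence the assignment $[(i\omega-b,-a)(X+r+\xi^{1,0})] \mapsto X + r + \xi^{1,0}$ is a well-defined smooth complex vector bundle isomorphism onto the underlying bundle $T^{1,0} \oplus \ad P \oplus T^*_{1,0}$ of $\mathcal{Q}_{P,2i\partial\omega,\theta}$. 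Since $(i\omega-b,-a)$ is an orthogonal automorphism of $E_0 \otimes \mathbb{C}$ by \eqref{eq:BA} and the pairing on $\mathcal{Q}_\ell$ is induced from $E_0 \otimes \mathbb{C}$, the map preserves $\IP{,}_0$; the anchor is manifestly preserved because $(i\omega-b,-a)$ fixes the $T$-component, so both sides read off $X \in T^{1,0}$.

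The substance is matching the holomorphic structure. My main device is that $\Phi := (i\omega-b,-a)$, being an orthogonal automorphism of $E_0 \otimes \mathbb{C}$, conjugates the Dorfman bracket of $E_0 = E_{P_K,H_0,\theta_0}$ into the bracket of a transformed string algebroid $E_{P_K,\tilde H,\theta}$ with $\theta = \theta_0 + a$; under $\Phi$ the lifting \eqref{eq:L} is carried to the standard lifting $\{V^{0,1} : V \in T^{0,1}\}$ of the transformed algebroid. Thus $\mathcal{Q}_{\ell(\omega,b,a)}$ is identified, via $\Phi^{-1}$, with the standard reduction of $E_{P_K,\tilde H,\theta}$, and it suffices to recover $\mathcal{Q}_{P,2i\partial\omega,\theta}$ from the latter. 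Concretely I would evaluate the defining formula $\dbar^\ell_V s = [\tilde V,\tilde s]\bmod \ell$ with the explicit bracket of Example \ref{def:E0}, taking $\tilde V = \Phi(V)$ for the differentiation direction $V \in T^{0,1}$ and $\tilde s = \Phi(X + r + \xi^{1,0})$, and check that the resulting $\mathcal{Q}$-valued $(0,1)$-form agrees with \eqref{eq:DolQ} for $\tau = 2i\partial\omega$. Here the terms $\dbar^\theta r$, $i_V F_\theta^{1,1}$ and $2\IP{F_\theta^{1,1}, r}$ come from the gauge part $-a$ of $\Phi$ (using $\theta = \theta_0 + a$ and $F_\theta^{0,2}=0$ from \eqref{eq:anomaly}, so that $\theta^{0,1}$ defines the holomorphic structure on $P = P_K \times_K G$ with $G = K^c$, $K$ compact), whereas the $(2,1)$-correction $-i_V\tau^{2,1} = -2i\,i_V\partial\omega$ is produced by the B-field part $i\omega$. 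The normalization $\tau = 2i\partial\omega$ is forced by comparing the Bianchi identity \eqref{eq:Bianchihol} with the anomaly equation \eqref{eq:anomaly}, since $d(2i\partial\omega) = -dd^c\omega = \IP{F_\theta \wedge F_\theta}$. An entirely parallel evaluation of $[\tilde V_1,\tilde V_2]\bmod \ell$ on holomorphic sections matches the bracket of Example \ref{def:Q0}.

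The hard part will be this bracket computation: expanding $[\Phi(V),\Phi(X + r + \xi^{1,0})]_0$ with the full Courant bracket of Example \ref{def:E0} generates many cross terms, quadratic and cubic in $a$ and linear in $\omega, b$, and one must check that after reducing modulo $\ell$ all the superfluous contributions cancel, leaving exactly \eqref{eq:DolQ}. This cancellation is governed precisely by the two lifting conditions \eqref{eq:liftingcond}: the first (the $H$-flux/anomaly condition) absorbs the $b$- and $\IP{a\wedge\cdot}$-contributions into the single term $2i\partial\omega$, while the second ($F_\theta^{0,2}=0$ together with the Maurer–Cartan equation for $a^{0,1}$) guarantees that $\dbar^\theta$ squares to zero and that the reduced operator is a genuine Dolbeault operator. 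I would organize the verification by type decomposition, treating the $T^{1,0}$-, $\ad P$- and $T^*_{1,0}$-components of $[\tilde V,\tilde s]\bmod \ell$ separately and invoking \eqref{eq:liftingcond} in each to collapse the expression; this is the same mechanism underlying \cite[Proposition 2.16]{GaRuTi2}, to which the bookkeeping can ultimately be reduced.
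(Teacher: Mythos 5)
Your proposal is correct and follows essentially the route the paper takes: the paper gives no independent argument for this lemma, deferring entirely to \cite[Proposition 2.16]{GaRuTi2}, and your outline is a faithful reconstruction of the mechanism behind that result --- the unique representatives coming from the decomposition of $\ell(\omega,b,a)^\perp$, the $(B,A)$-transform $(i\omega-b,-a)$ conjugating the bracket so that the lifting conditions \eqref{eq:liftingcond} collapse the transformed flux to $2i\partial\omega$, and the type-by-type matching with \eqref{eq:DolQ}. Your derivation of the normalization $\tau=2i\partial\omega$ from $d(2i\partial\omega)=-dd^c\omega=\IP{F_\theta\wedge F_\theta}$ is exactly the right consistency check, so nothing is missing beyond the bookkeeping you correctly attribute to the cited proposition.
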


\subsection{Bott-Chern algebroids}\label{sec:BC}

The holomorphic Courant algebroids obtained in Lemma \ref{l:liftingQ} correspond to a special class of those considered in Example \ref{def:Q0}. For instance, if $\mathcal{Q}$ is obtained by reduction from a smooth Courant algebroid of string type, then it must be isomorphic to $\mathcal{Q}_{P,\tau,\theta}$, where $\tau = 2 i\partial \omega$ for some $\omega \in \Omega^{1,1}_\RR$, and $\theta$ the Chern connection of a reduction of $P$ to a maximal compact subgroup. Following \cite{GaRuShTi,GaRuTi3}, we introduce the following definition.

\begin{definition}\label{d:CABC} 
A holomorphic Courant algebroid $\mathcal{Q}$ over a complex manifold $X$ is of \emph{Bott-Chern type} if it is isomorphic to the reduction $\mathcal{Q}_\ell$ of a smooth Courant algebroid of string type $E$, for some lifting $\ell$ of $T^{0,1}$ to $E \otimes \CC$.
\end{definition}

We recall next the classification of holomorphic Courant algebroids of Bott-Chern type with fixed holomorphic principal bundle \cite{GaRuShTi}. Let $G$ be a complex reductive Lie group with Lie algebra $\mathfrak{g}$. Fix a non-degenerate bi-invariant symmetric bilinear form $\IP{,}: \mathfrak{g} \otimes \mathfrak{g} \to \mathbb{C}$ such that $\IP{\mathfrak{k},\mathfrak{k}} \subset \RR$, for any compact Lie subalgebra $\mathfrak{k} \subset \mathfrak{g}$. Let $P$ be a holomorphic principal $G$-bundle over a compact complex manifold $X$ such that
\begin{equation}\label{eq:BC22}
p_1(P) = 0 \in H^{2,2}_{BC}(X,\RR),
\end{equation}
where $p_1$ is defined as in \eqref{eq:p1(P)=0dRC}. Then, for any choice of reduction $h$ of $P$ to a maximal compact subgroup $K \subset G$, with Chern connection $\theta^h$, we can choose $\omega \in \Omega^{1,1}_\RR$ such that
$$
dd^c\omega + \IP{F_h \wedge F_h} = 0,
$$
and construct a Bott-Chern algebroid $\mathcal{Q} := \mathcal{Q}_{P,2i\partial\omega,\theta^h}$ as in Example \ref{def:Q0}. By construction, $\mathcal{Q}$ fits in a holomorphic extension of the form
\begin{equation}\label{eq:defstring}
	\xymatrix{
		0 \ar[r] & T^*_{1,0} \ar[r] & \mathcal{Q} \ar[r]^\rho & A_P \ar[r] & 0,
	}
	\end{equation}
where $\rho(V + r + \xi) = \theta^h V + r$ is a bracket preserving map ($\theta^h V$ denoting the horizontal lift) inducing a Lie algebroid isomorphism $A_\mathcal{Q} \cong A_P$ and a quadratic Lie algebra bundle isomorphism  $\ad_\mathcal{Q} \cong (\ad P,\IP{,})$. The classification of our interest is for pairs $(\mathcal{Q},\rho)$ as before (which we call Bott-Chern algebroids, following \cite{GaRuShTi}), via conmutative diagrams, as follows:
\begin{equation}\label{eq:defstringiso}
    \xymatrix{
      0 \ar[r] &  T^*_{1,0} \ar[r] \ar[d]^{id} & \mathcal{Q} \ar[r]^\rho \ar[d]^{\varphi} & A_P \ar[r] \ar[d]^{id} & 0\\
      0 \ar[r] &  T^*_{1,0} \ar[r] & \mathcal{Q}' \ar[r]^{\rho'} & A_{P} \ar[r] & 0
    }
\end{equation}
where $\varphi \colon \mathcal{Q} \to \mathcal{Q}'$ is an isomorphism of holomorphic Courant algebroids.

Consider the Aeppli cohomology groups of the complex manifold $H^{p,q}_{A}(X)$, defined by
\begin{equation}\label{eq:A}
H^{p,q}_{A}(X)=\frac{\mathrm{ker} \hspace{1mm} \partial\overline{\partial}: \Omega^{p,q}(X,\mathbb{C}) \longrightarrow \Omega^{p+1,q+1}(X,\mathbb{C})}{\mathrm{Im} \hspace{1mm}  \partial \oplus \overline{\partial}: \Omega^{p-1,q}(X,\mathbb{C}) \oplus \Omega^{p,q-1}(X,\mathbb{C}) \longrightarrow \Omega^{p,q}(X,\mathbb{C})}.
\end{equation}
We denote by $H^{p,p}_{A}(X,\RR) \subset H^{p,p}_{A}(X)$ the canonical real structure. Let $\Omega^{2,0}_{cl}$ be the sheaf of closed $(2,0)$-forms on $X$, whose first cohomology can be described as \cite{GualtieriGKG}
\begin{equation*}
H^1(\Omega^{2,0}_{cl}) \cong \frac{\Ker \; d \colon \Omega^{3,0} \oplus \Omega^{2,1} \to \Omega^{4,0} \oplus \Omega^{3,1} \oplus \Omega^{2,2}}{ \operatorname{Im} \; d \colon \Omega^{2,0} \to \Omega^{3,0} \oplus \Omega^{2,1}}.
\end{equation*}
Taking representatives, the $\partial$-operator induces a well-defined linear map 
\begin{equation}\label{eq:partialmap}
\partial \colon H^{1,1}_A(X,\mathbb{R}) \to H^1(\Omega^{2,0}_{cl}) .
\end{equation}

\begin{proposition}[\cite{GaRuShTi}]\label{prop:BCclassification}
Let $\mathfrak{S}$ be the set of equivalence classes of Bott-Chern algebroids over $X$ with fixed principal bundle $P$ and bundle of quadratic Lie algebras $(\ad P,\IP{,})$, defined via the diagrams \eqref{eq:defstringiso}. Then, $\mathfrak{S}$ is an affine space for the vector space given by the image of \eqref{eq:partialmap}. In particular, if $X$ is a $\partial\bar{\partial}$-manifold then there is only one equivalence class.
\end{proposition}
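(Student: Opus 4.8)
The plan is to fix a basepoint in $\mathfrak{S}$ and set up an explicit bijection with the vector space $\operatorname{Im} \partial \subset H^1(\Omega^{2,0}_{cl})$, then check compatibility with the evident affine structures. First I would record that $\mathfrak{S}$ is non-empty: since $p_1(P) = 0 \in H^{2,2}_{BC}(X,\RR)$, the construction recalled before the statement produces, for a reduction $h_0$ of $P$ to $K$ with Chern connection $\theta^{h_0}$, a form $\omega_0 \in \Omega^{1,1}_\RR$ with $dd^c\omega_0 + \IP{F_{h_0}\wedge F_{h_0}} = 0$, hence a reference Bott-Chern algebroid $\cQ_0 := \cQ_{P,2i\partial\omega_0,\theta^{h_0}}$. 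By Definition \ref{d:CABC} and Lemma \ref{l:liftingQ}, every class in $\mathfrak{S}$ is represented by a model $\cQ_{P,2i\partial\omega,\theta^h}$ of this same shape.

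Next I would reduce to a common connection. Varying the reduction $h$ amounts to acting by the complex gauge group of $P$, and I would show that this action is realized by rigid isomorphisms in the sense of \eqref{eq:defstringiso}, so that up to equivalence one may always take $\theta^h = \theta^{h_0}$, adjusting $\omega$ accordingly. With the connection fixed, both $2i\partial\omega$ and $2i\partial\omega_0$ satisfy the Bianchi identity \eqref{eq:Bianchihol} for the same connection $\theta^{h_0}$, so their difference $2i\partial(\omega - \omega_0)$ is $d$-closed and lies in $\Omega^{2,1}$; moreover $dd^c(\omega - \omega_0) = 0$, so $\omega - \omega_0$ defines a class in $H^{1,1}_A(X,\RR)$ and $2i\partial(\omega-\omega_0)$ represents $\partial[\omega-\omega_0] \in \operatorname{Im}\partial$, up to the fixed normalization $2i$. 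This yields a map $\Phi \colon \mathfrak{S} \to \operatorname{Im}\partial$, $\cQ \mapsto \partial[\omega - \omega_0]$.

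The core of the argument is that $\Phi$ is a well-defined bijection. For surjectivity, given $\sigma \in \Omega^{1,1}_\RR$ with $\partial\dbar\sigma = 0$, the form $\omega := \omega_0 + \sigma$ still solves the anomaly equation, so $\cQ_{P,2i\partial\omega,\theta^{h_0}}$ lies in $\mathfrak{S}$ and maps to $\partial[\sigma]$. For well-definedness and injectivity together, I would analyze the rigid isomorphisms between two models sharing $\theta^{h_0}$: such a map fixes $T^*_{1,0}$ and induces the identity on $A_P$, which forces it to be a $B$-field transform by a $(2,0)$-form $B$, whose effect on $\tau = 2i\partial\omega$ is the addition of the $d$-exact term $dB$. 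Since the denominator of $H^1(\Omega^{2,0}_{cl})$ is precisely $\operatorname{Im}\, d \colon \Omega^{2,0} \to \Omega^{3,0} \oplus \Omega^{2,1}$, two models are equivalent exactly when they determine the same element of $H^1(\Omega^{2,0}_{cl})$, giving both well-definedness and injectivity. Finally, $\Phi$ intertwines the translation $\omega \mapsto \omega + \sigma$ on $\mathfrak{S}$ with vector addition on $\operatorname{Im}\partial$, so $\mathfrak{S}$ is an affine space over $\operatorname{Im}\partial$; when $X$ is a $\partial\dbar$-manifold one checks that a $\partial\dbar$-closed real $(1,1)$-form has $\partial$-image that is $d$-exact, whence $\operatorname{Im}\partial = 0$ and $\mathfrak{S}$ is a single point.

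The hard part will be the precise identification of the rigid isomorphisms with $B$-field transforms and the exact bookkeeping of how such a transform alters the defining three-form $\tau$; here one must verify that the isomorphism preserves the full holomorphic Courant structure (Dolbeault operator, bracket, and pairing), not merely the underlying extension, and that the reduction to a common connection in the second step is genuinely realized by maps of the form \eqref{eq:defstringiso}.
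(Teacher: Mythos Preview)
The paper does not prove this proposition: it is stated with attribution to \cite{GaRuShTi} and is followed immediately by Remark~\ref{rem:diffisom}, with no proof given. So there is no ``paper's own proof'' to compare against; the result is simply imported.

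Your sketch is a reasonable outline of the argument one expects to find in \cite{GaRuShTi}: fix a basepoint, normalize the connection, and identify the residual freedom with classes in $H^1(\Omega^{2,0}_{cl})$ coming from $\partial$ of Aeppli classes. Two points deserve care. First, your reduction to a common connection must be realized by an isomorphism covering the \emph{identity} on $A_P$; as Remark~\ref{rem:diffisom} emphasizes, the equivalence relation here is stricter than in \cite{GaRuTi2} and does not allow covering a nontrivial holomorphic gauge transformation. The relevant move is not a complex gauge transformation of $P$ but rather the $(\gamma,\beta)$-type transform \eqref{eq:BA} with $\beta = a = \theta^h - \theta^{h_0} \in \Omega^{1,0}(\ad P)$, which changes the splitting of the Atiyah sequence while fixing $A_P$ itself; you should check explicitly that this is a holomorphic Courant isomorphism and track how it modifies $\tau$. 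Second, in the injectivity step, an isomorphism covering $\mathrm{id}_{A_P}$ between two models with the same $\theta^{h_0}$ need not a priori be a pure $B$-field transform: it could in principle combine a $(2,0)$-form $B$ with a further $a' \in \Omega^{1,0}(\ad P)$ that happens to preserve the connection, and one must argue this forces $a'$ to contribute only an exact correction. You have flagged both issues yourself, so the plan is sound provided those verifications are carried out.
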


\begin{remark}\label{rem:diffisom}
Observe that the notion of isomorphism considered here is stronger than in \cite{GaRuTi2}, where the Courant algebroid isomorphism $\varphi \colon \mathcal{Q} \to \mathcal{Q}'$ can cover $dg \colon A_P \to A_P$ for $g$ a non-trivial holomorphic gauge transformation on $P$.
\end{remark}

\begin{remark}\label{rem:forgetful}
Consider the composition of the map $H^1(\Omega^{2,0}_{cl}) \to H^{2,1}_{\dbar}(X)$, defined by $[\tau] \to [\tau^{2,1}]$, with \eqref{eq:partialmap}. This composition can be interpreted as a forgetful map, whose image classifies holomorphic extensions of the Atiyah-Lie algebroid $A_P$ by $T^*_{1,0}$ underlying Bott-Chern algebroids with fixed principal bundle $P$. This weaker structure is actually what we need for the definition of the Futaki invariants in Section \ref{sec:Futaki}.
\end{remark}

\section{Generalized Hermitian metrics on Bott-Chern algebroids}\label{sec:GHermitianBC}

\subsection{Generalized Hermitian metrics}\label{sec:GHermitian}

We introduce next generalized Hermitian metrics on Bott-Chern algebroids, with possibly indefinite signature. Our discussion follows closely \cite{Ga0,GaJoSt,GaRuTi1}. Recall that a generalized metric on a smooth Courant algebroid $E$ of string type is given by an orthogonal decomposition 
$$
E = V_+ \oplus V_-
$$
satisfying that the restriction of the ambient metric to $V_+$ is positive definite and that $\pi_{|V_+}:V_{+}\rightarrow T$ is an isomorphism.  Recall that a generalized metric determines uniquely a Riemann metric $g$ on $M$ and an isotropic splitting of $E$. In particular, it has an associated isomorphism $E \cong E_{P_K,H,\theta}$ for a uniquely determined three-form $H$, and connection $\theta$ satisfying \eqref{eq:Bianchireal} (see Example \ref{def:E0}). Furthermore, via this identification we have
\begin{equation}\label{eq:Vpm}
V_+ = \{V + g(V), V \in T\}, \quad V_- = \{V - g(V) + r, V \in T, r \in \ad P_K\}.
\end{equation}
The basic interaction between generalized metrics and complex geometry is provided by the following definition.

\begin{definition}\label{d:generalizedmetricomp}
Let $X$ be a complex manifold endowed with a smooth Courant algebroid $E$ of string type. We say that a generalized metric $E = V_+ \oplus V_-$ is \emph{compatible with $J$} if
$$
\ell = \{e \in V_+ \otimes \mathbb{C},\ \pi(e) \in T^{0,1}\} \subset E \otimes \mathbb{C}
$$
is a lifting of $T^{0,1}$.
\end{definition}

Using the splitting of $E$ determined by the generalized metric, it is not difficult to see that Definition \ref{d:generalizedmetricomp} implies that $g$ is Hermitian and furthermore
$$
\ell = e^{i\omega}T^{0,1} \subset E \otimes \CC
$$
where $\omega = g(J,)$ is the associated Hermitian form. Applying now Lemma \ref{lemma:liftings} we obtain the following.

\begin{lemma}\label{l:lifting2}
Let $X = (M,J)$ be a complex manifold endowed with a smooth Courant algebroid of string type $E$. A generalized metric $E = V_+ \oplus V_-$ is compatible with $J$ if and only if the associated Riemannian metric $g$ is Hermitian and furthermore 
\begin{equation}\label{eq:liftingcondmet}
H = -d^c \omega, \qquad F_\theta^{0,2} = 0.
\end{equation}
In particular 
$$
dd^c \omega + \IP{F_\theta \wedge F_\theta} = 0.
$$
\end{lemma}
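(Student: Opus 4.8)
The plan is to identify the subbundle $\ell$ determined by the generalized metric with a lifting $\ell(\omega,0,0)$ in the normal form of Lemma \ref{lemma:liftings}, and then read off \eqref{eq:liftingcondmet} as the specialisation of the lifting conditions \eqref{eq:liftingcond} to $b = a = 0$. First I would fix the isotropic splitting and the isomorphism $E \cong E_{P_K,H,\theta}$ determined by the generalized metric, so that $V_\pm$ are given by \eqref{eq:Vpm}. In this splitting, unwinding Definition \ref{d:generalizedmetricomp} gives
\[
\ell = \{ V + g(V) : V \in T^{0,1} \} \subset E \otimes \CC,
\]
where $g(V) := g(V,\cdot)$ and $g$ is extended $\CC$-bilinearly. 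Since the anchor sends $V + g(V) \mapsto V$, the restriction $\pi|_\ell \colon \ell \to T^{0,1}$ is automatically an isomorphism, so the content of $\ell$ being a lifting is isotropy together with involutivity.

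Next I would treat these two remaining conditions separately. For isotropy, the pairing of $E_{P_K,H,\theta}$ gives $\IP{V + g(V), W + g(W)} = g(V,W)$ for $V,W \in T^{0,1}$, so $\ell$ is isotropic if and only if $T^{0,1}$ is $g$-isotropic, which is precisely the statement that $g$ is Hermitian. This accounts for the ``$g$ Hermitian'' clause in both directions of the equivalence. Assuming $g$ Hermitian, I would then check that $\ell$ coincides with $\ell(\omega,0,0) = e^{i\omega}T^{0,1}$ in the notation \eqref{eq:L}: using $\omega = g(J\cdot,\cdot)$ and $J|_{T^{0,1}} = -i\,\Id$, the $B$-field action \eqref{eq:BA} yields $(i\omega,0)(V) = V + i_V(i\omega) = V + g(V)$ for $V \in T^{0,1}$, which is exactly the claimed identification and matches the remark preceding the statement.

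With $\ell = \ell(\omega,0,0)$ in hand, I would invoke Lemma \ref{lemma:liftings}: $\ell(\omega,b,a)$ is involutive, hence a genuine lifting, if and only if \eqref{eq:liftingcond} holds, and setting $b = 0$, $a = 0$ collapses those two equations to $H + d^c\omega = 0$ and $F_\theta^{0,2} = 0$, which is \eqref{eq:liftingcondmet}. The final ``in particular'' assertion is then immediate from \eqref{eq:anomaly} (the $a=0$ case), or equivalently by applying $d$ to $H = -d^c\omega$ and substituting the Bianchi identity $dH = \IP{F_\theta \wedge F_\theta}$ from \eqref{eq:Bianchireal}, giving $dd^c\omega + \IP{F_\theta \wedge F_\theta} = 0$. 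The one genuinely computational point, and the place where care is needed, is the sign- and convention-dependent identity $(i\omega,0)(V) = V + g(V)$ for $V \in T^{0,1}$, which pins down that the generalized metric corresponds to the lifting data $(\omega,0,0)$ rather than to a triple with nonzero $b$ or $a$; everything else is bookkeeping on top of Lemma \ref{lemma:liftings}.
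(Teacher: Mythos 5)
Your argument is correct and is essentially the proof the paper has in mind: the paper likewise identifies $\ell$ with $e^{i\omega}T^{0,1}$ in the splitting determined by the generalized metric (isotropy giving the Hermitian condition) and then reads off \eqref{eq:liftingcondmet} by applying Lemma \ref{lemma:liftings} with $b=a=0$, the final identity following from \eqref{eq:anomaly}. You have simply filled in the computation $(i\omega,0)(V)=V+g(V)$ for $V\in T^{0,1}$ that the paper leaves implicit, and that computation is right.
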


Given a compatible generalized metric, consider the associated holomorphic Courant algebroid $\mathcal{Q}_\ell \cong \mathcal{Q}_{P,2 i\partial \omega,\theta}$ of Bott-Chern type (see Lemma \ref{l:liftingQ} and Definition \ref{d:CABC}). We find next an alternative presentation of $\mathcal{Q}_\ell$ which will naturally endow this bundle with a Hermitian metric, possibly with indefinite signature. To see this, note that $V_+^\perp = V_-$ implies that
$$
\ell^\perp = (V_- \otimes \mathbb{C}) \oplus \ell.
$$
Therefore, as a smooth orthogonal bundle $\mathcal{Q}_\ell$ is canonically isomorphic to 
$$
\mathcal{Q}_\ell := \ell^\perp/\ell \cong V_- \otimes \mathbb{C}.
$$

\begin{definition}\label{d:generalizedmetriHerm}
Let $X$ be a complex manifold endowed with a smooth Courant algebroid of string type $E$ and a compatible generalized metric $E = V_+ \oplus V_-$. The induced \emph{generalized Hermitian metric} $\mathbf{G}$ on $\mathcal{Q}_\ell$ is defined by
$$
\mathbf{G}([s_1],[s_2]) = - \IP{\pi_- s_1, \overline{\pi_- s_2}}
$$
for $[s_j] \in \ell^\perp/\ell$ and $\pi_- \colon \ell^\perp \to V_- \otimes \mathbb{C}$ the orthogonal projection.
\end{definition}

We are ready to prove the main result of this section, where we calculate the Chern connection of the induced generalized Hermitian metric $\mathbf{G}$ in terms of the underlying pair $(H,\theta)$ (see Lemma \ref{l:lifting2}). Our result extends \emph{Bismut's Identity} (see \cite[Theorem 2.9]{Bismut}), interpreted recently in \cite{GaJoSt} in the language of holomorphic Courant algebroids. 

\begin{proposition}\label{t:Cherngeneralized} 
Let $X = (M,J)$ be a complex manifold endowed with a smooth Courant algebroid of string type $E$ and a compatible generalized metric $E = V_+ \oplus V_-$. Then, via the isomorphism $\mathcal{Q}_\ell \cong V_- \otimes \mathbb{C}$, the Chern connection of the associated generalized Hermitian metric $\mathbf{G}$ on $\mathcal{Q}_\ell$ is given by
\begin{align}\label{eq:Nablaminusbisabs}
D^{\mathbf{G}}_V s = \pi_-[\sigma_+ V,s].
\end{align}
Here, $\sigma_+ V = V + g(V)$ is the inverse of the isomorphism $\pi_{|V_+} \colon V_+ \to T$. More explicitly, via the identification $V_- \cong T \oplus \ad P_K$, we have
\begin{align}\label{eq:NablaChernexp}
D^{\mathbf{G}}_V (W + r) = \nabla^-_V W - g^{-1} \IP{i_V F_\theta,r} + d^\theta_V r - F_\theta(V,W),
\end{align}
where $\nabla^- = \nabla + \tfrac{1}{2} g^{-1}d^c \omega$, for $\nabla$ the Levi-Civita connection of $g$.
\begin{proof}
The right hand side of \eqref{eq:Nablaminusbisabs} defines an orthogonal connection on $V_-$, which extends $\mathbb{C}$-linearly to a $\mathbf{G}$-unitary connection on $V_- \otimes \mathbb{C}$. By the abstract definition of the Dolbeault operator on $\mathcal{Q}_\ell$, we have that the $(0,1)$-part of the right hand side of \eqref{eq:Nablaminusbisabs} coincides with $\overline{\partial}^\ell$. Formula \eqref{eq:NablaChernexp} follows from \cite[Equation (5.10)]{GaRuTi1}.
\end{proof}
\end{proposition}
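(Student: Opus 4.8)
The plan is to verify Proposition \ref{t:Cherngeneralized} in two stages: first establishing that the abstract formula \eqref{eq:Nablaminusbisabs} genuinely defines the Chern connection of $\mathbf{G}$, and then unwinding it into the explicit tensorial expression \eqref{eq:NablaChernexp}. Recall that on a holomorphic vector bundle with Hermitian metric, the Chern connection is the unique connection that is both $\mathbf{G}$-unitary and compatible with the holomorphic structure, i.e.\ whose $(0,1)$-part equals the Dolbeault operator. So I need to check exactly these two properties for the right-hand side of \eqref{eq:Nablaminusbisabs}.

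First I would show that $D^{\mathbf{G}}_V s = \pi_-[\sigma_+ V, s]$ defines an orthogonal (hence, after $\mathbb{C}$-linear extension, $\mathbf{G}$-unitary) connection on $V_- \otimes \mathbb{C}$. The connection property (Leibniz rule in the $\mathcal{O}_X$-module direction) follows from axiom (D3) of the Dorfman bracket, since $\sigma_+ V$ has anchor $\pi(\sigma_+ V) = V$ and $\pi_-$ kills the $T^*_{1,0}$-component. Metric-compatibility with $\mathbf{G}$ should follow from axiom (D4) applied to $\sigma_+ V \in V_+$ together with the facts that $V_+ \perp V_-$ and that the bracket of a section of $V_+$ with a section of $V_-$ has its $V_+$-component controlled by the generalized metric splitting; concretely one checks $\pi(\sigma_+ V)\IP{\pi_- s_1, \overline{\pi_- s_2}} = \IP{[\sigma_+ V, \pi_- s_1], \overline{\pi_- s_2}} + \IP{\pi_- s_1, \overline{[\sigma_+ V, \pi_- s_2]}}$ using that $\sigma_+ V$ has real anchor, and then projects back to $V_-$. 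Second, for compatibility with the holomorphic structure, I invoke the abstract definition of $\dbar^\ell$ on $\mathcal{Q}_\ell$ given in Section \ref{s:red}: for $V \in T^{0,1}$, the lift to $\ell$ is precisely $\sigma_+ V = V + g(V)$ restricted to the compatibility locus (since $\ell = e^{i\omega}T^{0,1}$ is spanned by such lifts by Lemma \ref{l:lifting2}), so $\pi_-[\sigma_+ V, s] \bmod \ell$ reproduces $\dbar^\ell s$ by construction. This identifies the $(0,1)$-part and pins down the connection as Chern.

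The remaining, and most laborious, step is deriving the explicit formula \eqref{eq:NablaChernexp}. Here I would not redo the computation from scratch but instead appeal directly to \cite[Equation (5.10)]{GaRuTi1}, matching conventions: one substitutes $\sigma_+ V = V + g(V)$ and $s = W + r$ into the bracket $[\cdot,\cdot]$ of Example \ref{def:E0} with $H = -d^c\omega$ (from Lemma \ref{l:lifting2}), then applies $\pi_-$ to extract the $T \oplus \ad P_K$ components according to the splitting \eqref{eq:Vpm}. The Levi-Civita-plus-torsion term $\nabla^- = \nabla + \tfrac{1}{2}g^{-1}d^c\omega$ arises from the $T$-valued part of the bracket after absorbing the three-form $H = -d^c\omega$, which is the standard mechanism by which a closed-up-to-anomaly $H$-flux shifts the Levi-Civita connection to the Bismut-type connection; the curvature cross-terms $-g^{-1}\IP{i_V F_\theta, r}$ and $-F_\theta(V,W)$ come from the $d^\theta$ and $F_\theta(V,W)$ pieces of the bracket.

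The main obstacle I anticipate is bookkeeping in the second stage: correctly tracking the metric musical isomorphisms $g$ and $g^{-1}$, the sign of the torsion $\tfrac{1}{2}g^{-1}d^c\omega$, and the projection $\pi_-$ against the nonstandard identification $V_- \cong T \oplus \ad P_K$ from \eqref{eq:Vpm} (note $V_-$ contains $V - g(V) + r$, so the orthogonal projection from $\ell^\perp$ mixes the $T$ and $T^*$ slots in a way that must be handled carefully). The conceptual content of stage one is lightweight once the Dorfman axioms (D3)--(D4) are in hand, so I expect the proof to be genuinely short modulo the reference to \cite{GaRuTi1}, with all the real friction concentrated in reconciling the curvature and torsion conventions so that \eqref{eq:NablaChernexp} comes out with the stated signs.
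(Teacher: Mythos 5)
Your proposal is correct and follows essentially the same route as the paper: establish that $\pi_-[\sigma_+V,\cdot]$ is a $\mathbf{G}$-unitary connection whose $(0,1)$-part is $\dbar^\ell$ by the abstract definition of the reduced Dolbeault operator, then obtain the explicit formula \eqref{eq:NablaChernexp} from \cite[Equation~(5.10)]{GaRuTi1}. The extra detail you supply (the use of axioms (D3)--(D4) and the identification of $\sigma_+V$ as the unique lift of $V\in T^{0,1}$ to $\ell$) just fleshes out what the paper's three-sentence proof leaves implicit.
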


In our next result we calculate an explicit formula for the generalized Hermitian metric $\mathbf{G}$ in terms of the isomorphism $\mathcal{Q}_\ell \cong \mathcal{Q}_{P,2i\partial \omega,\theta}$ in Lemma \ref{l:liftingQ}.

\begin{lemma}\label{t:Ggeneralized1} 
Let $X$ be a complex manifold endowed with a smooth Courant algebroid of string type $E$ and a compatible generalized metric $E = V_+ \oplus V_-$. Then, the Hermitian isometry $\psi \colon \mathcal{Q}_{P,2i\partial \omega,\theta} \to V_-\otimes \mathbb{C}$ induced by Lemma \ref{l:liftingQ} is given by
$$
\psi(V + r + \xi) = e^{i\omega}V + r - \tfrac{1}{2} e^{-i\omega} g^{-1}\xi.
$$
Consequently,
$$
\psi^*\mathbf{G}(V + r + \xi,V + r + \xi) = g(V,\overline{V}) + \tfrac{1}{4}g^{-1}(\xi,\overline{\xi}) -  \IP{r,\overline{r}} .
$$
\begin{proof}
The formula for $\psi$ follows by composing the isomorphisms
$$
\mathcal{Q}_{P,2i\partial\omega,\theta}\overset{e^{i\omega}}{\longrightarrow}\mathcal{Q}_{\ell}\overset{\pi_-}{\longrightarrow}V_-\otimes\mathbb{C}.
$$
The pullback of $\mathbf{G}$ in Definition (\ref{d:generalizedmetriHerm}) along $\psi$ is straightforward.
\end{proof}
\end{lemma}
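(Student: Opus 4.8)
The plan is to realize $\psi$ as the composite of the two maps displayed in the statement, namely the Chern-correspondence isomorphism $\mathcal{Q}_{P,2i\partial\omega,\theta}\cong\mathcal{Q}_\ell$ from Lemma \ref{l:liftingQ} followed by the orthogonal projection $\pi_-\colon\mathcal{Q}_\ell=\ell^\perp/\ell\to V_-\otimes\mathbb{C}$, and then to track a general element $V+r+\xi$ through both. The first point to record is that, for a generalized metric compatible with $J$, Lemma \ref{l:lifting2} identifies the lifting as $\ell=e^{i\omega}T^{0,1}$; in the notation of Lemma \ref{lemma:liftings} this is the lifting with $b=0$, $a=0$, so the map labelled $e^{i\omega}$ in the stated composition sends $V+r+\xi$ to the class of $e^{i\omega}(V+r+\xi)$ in $\ell^\perp/\ell$. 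Thus $\psi=\pi_-\circ e^{i\omega}$, and the whole computation reduces to evaluating the action $e^{i\omega}$ via \eqref{eq:BA} and then decomposing the result along $\ell^\perp=(V_-\otimes\mathbb{C})\oplus\ell$.

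Applying \eqref{eq:BA} with $(\gamma,\beta)=(i\omega,0)$ gives $e^{i\omega}(V+r+\xi)=V+r+\xi+i\,i_V\omega$. The key Hermitian identity, which I would verify from $\omega=g(J\cdot,\cdot)$ together with $JV=iV$ for $V\in T^{1,0}$, is that $i\,i_V\omega=-g(V)$; hence $e^{i\omega}(V+r+\xi)=\bigl(V-g(V)\bigr)+r+\xi$. The same identity shows $e^{i\omega}U=U+g(U)$ for $U\in T^{0,1}$, so that $\ell=\{U+g(U):U\in T^{0,1}\}\subset V_+\otimes\mathbb{C}$ in the notation \eqref{eq:Vpm}, while $V-g(V)=e^{i\omega}V$ and $r$ already lie in $V_-\otimes\mathbb{C}$ and project to themselves.

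The heart of the argument, and the step I expect to demand the most care, is the projection of the cotangent term $\xi\in T^*_{1,0}$. Here I would solve $\xi=\bigl(W-g(W)\bigr)+\bigl(U+g(U)\bigr)$ with $W\in T\otimes\mathbb{C}$ and $U\in T^{0,1}$: matching the tangential component forces $W=-U$, and matching the cotangent component forces $2g(U)=\xi$, i.e. $U=\tfrac12 g^{-1}\xi$ and $W=-\tfrac12 g^{-1}\xi$. Reading off the $V_-$-summand then gives $\pi_-\xi=W-g(W)=-\tfrac12 g^{-1}\xi+\tfrac12\xi=-\tfrac12 e^{-i\omega}g^{-1}\xi$, where the last equality uses $e^{-i\omega}U=U-g(U)$ for $U\in T^{0,1}$. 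Combining the three contributions yields $\psi(V+r+\xi)=e^{i\omega}V+r-\tfrac12 e^{-i\omega}g^{-1}\xi$, as claimed.

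Finally, for the metric formula I would substitute $s=\psi(V+r+\xi)$ into $\mathbf{G}(s,s)=-\IP{s,\overline s}$ from Definition \ref{d:generalizedmetriHerm}, after recording the $T\oplus\ad P_K\oplus T^*$-components of $s$: tangential $V-\tfrac12 g^{-1}\xi$, adjoint $r$, and cotangent $-g(V)+\tfrac12\xi$. Expanding the bilinear form $\IP{\cdot,\cdot}_0$, which pairs $T$ with $T^*$ and restricts to $\IP{\cdot,\cdot}$ on $\ad P_K$, all cross terms vanish by type: a $(1,0)$-form annihilates $T^{1,0}$ and $g$ vanishes on $T^{1,0}\times T^{1,0}$, so only the diagonal pairings $-g(V,\overline V)$, $-\tfrac14 g^{-1}(\xi,\overline\xi)$ and $\IP{r,\overline r}$ survive. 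Negating gives $\psi^*\mathbf{G}(V+r+\xi,V+r+\xi)=g(V,\overline V)+\tfrac14 g^{-1}(\xi,\overline\xi)-\IP{r,\overline r}$. The only genuine subtlety throughout is bookkeeping the Hermitian type conventions consistently; once $i\,i_V\omega=-g(V)$ is fixed, every other step is forced.
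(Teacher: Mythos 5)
Your proposal is correct and follows exactly the route the paper indicates: it realizes $\psi$ as $\pi_-\circ e^{i\omega}$, evaluates $e^{i\omega}$ via \eqref{eq:BA} and the identity $i\,i_V\omega=-g(V)$ on $T^{1,0}$, decomposes along $\ell^\perp=(V_-\otimes\mathbb{C})\oplus\ell$, and then computes the pullback of $\mathbf{G}$ directly, filling in the details the paper calls ``straightforward''. The only blemish is the phrase ``a $(1,0)$-form annihilates $T^{1,0}$,'' which should read $T^{0,1}$; the computation you actually carry out uses the correct vanishing statements, so this is a slip of wording rather than a gap.
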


\begin{remark}\label{rem:signature}
By the previous lemma, the signature of $\mathbf{G}$ is $(4n + 2l_2,2l_1)$, where $(l_1,l_2)$ is the signature of $\IP{,}: \mathfrak{k} \otimes \mathfrak{k} \to \mathbb{R}$ and $n = \dim_\CC X$.
\end{remark}

\subsection{Curvature of generalized Hermitian metrics}\label{sec:curvature}

We calculate next the curvature form and second Ricci curvature (see Equation \eqref{eq:secondRicciG}) of the generalized Hermitian metric $\mathbf{G}$ in Definition \ref{d:generalizedmetriHerm}. 
We will systematically use the identifications $\mathcal{Q}_\ell \cong V_- \otimes \mathbb{C}$ and 
\begin{equation}\label{eq:V-iso}
V_- \cong T \oplus \ad P_K.
\end{equation}
Consider the (possibly) indefinite metric on $V_-$ given by
$$
\IP{V+r,V+r}^0 := - \IP{V - g(V) +r,V - g(V) +r} = g(V,V) - \IP{r,r}.
$$
Then, extending $\CC$-linearly $\IP{,}^0$ to $V_- \otimes \CC$, it follows from Definition \ref{d:generalizedmetriHerm} that $\mathbf{G}$ is given by
$$
\mathbf{G}(s_1,s_2) = \IP{s_1,\overline{s_2}}^0.
$$
By Proposition \ref{t:Cherngeneralized}, the Chern connection $D^{\mathbf{G}}$ is the $\mathbb{C}$-linear extension of a $\IP{,}^0$-orthogonal connection
$$
D \colon \Omega^0(V_-) \to \Omega^1(V_-),
$$
and hence to calculate $F_\mathbf{G}:= F_{D^\mathbf{G}}$ it suffices to give a formula for $F_D$. Explicitly, in terms of the decomposition \eqref{eq:V-iso} we have
$$
D_V(W + r) =  \nabla^-_V W - g^{-1} \IP{i_V F_\theta,r} + d^\theta_V r - F_\theta(V,W).
$$
For the calculations, it will be useful to express $D$ in matrix notation as
$$
D=\left(\begin{array}{cc}
    \nabla^{-} & \mathbb{F}^{\dagger}\\
    -\mathbb{F} & d^{\theta}
    \end{array}\right)
$$
where $\mathbb{F} \in \Omega^1(\Hom(T,\ad P_K))$ is the $\Hom(T,\ad P_K)$-valued $1$-form
\begin{equation}\label{eq:Foperator}
(i_V\mathbb{F})(W) := F_\theta(V,W)
\end{equation}
and $\mathbb{F}^{\dagger} \in \Omega^1(\Hom(\ad P_K,T))$ is the corresponding $\IP{,}^0$-adjoint
$$
(i_V \mathbb{F}^{\dagger})(r) = - g^{-1} \IP{i_V F_\theta,r}.
$$
We will use the standard notation $R_{\nabla^-}$ for the curvature of $\nabla^-$ and also $\nabla^{\theta,-}$ for the covariant derivative induced by $\theta$ and $\nabla^-$ on $\Lambda^2T^* \otimes \ad P_K$. In particular,
$$
(\nabla^{\theta,-}_Z F_\theta)(V,W) = d_Z^\theta(F_\theta(V,W))- F_\theta(\nabla^-_ZV,W) - F_\theta(V,\nabla^-_ZW)
$$
for any triple of vector fields $V,W,Z$ on $M$.

\begin{lemma}\label{lemma:connectioncurvature}
The curvature of $D$ is given by
$$
F_{D} = \left(\begin{array}{cc}
    R_{\nabla^{-}}-\mathbb{F}^{\dagger}\wedge \mathbb{F}& - \mathbb{I}^{\dagger}\\
    \mathbb{I} & [F_\theta,]-\mathbb{F}\wedge \mathbb{F}^{\dagger}
\end{array}\right) 
$$
where
\begin{align*}
i_Wi_V\mathbb{F}^{\dagger}\wedge \mathbb{F}(Z) & = g^{-1}\IP{i_W F_\theta,F_\theta(V,Z)} - g^{-1}\IP{i_V F_\theta,F_\theta(W,Z)},\\
i_Wi_V \mathbb{I}(Z) & = (\nabla^{\theta,-}_ZF_\theta)(V,W) - F_\theta(V,g^{-1}i_Zi_Wd^c\omega) + F_\theta(W,g^{-1}i_Zi_Vd^c\omega),\\
i_Wi_V\mathbb{F}\wedge \mathbb{F}^{\dagger}(r) & = F_\theta(W,g^{-1}\IP{i_VF_\theta,r}) - F_\theta(V,g^{-1}\IP{i_WF_\theta,r}).
\end{align*}
\end{lemma}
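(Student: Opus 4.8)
The plan is to reduce everything to the standard curvature formula for a connection written as a background connection plus an endomorphism-valued one-form. Using the matrix presentation of $D$ introduced before the statement (which in turn rests on Proposition \ref{t:Cherngeneralized}), I would write $D = D_0 + A$, where $D_0$ is the block-diagonal connection $\nabla^-$ on $T$ and $d^\theta$ on $\ad P_K$, and $A = \left(\begin{smallmatrix} 0 & \mathbb{F}^{\dagger} \\ -\mathbb{F} & 0 \end{smallmatrix}\right)$ is the tensorial off-diagonal part, i.e.\ an $\End(V_-)$-valued one-form. Then I apply $F_D = F_{D_0} + d^{D_0}A + A \wedge A$ and treat the three terms separately, observing that $F_{D_0}$ and $A \wedge A$ are block-diagonal while $d^{D_0}A$ is block off-diagonal, so that the claimed block structure of $F_D$ emerges automatically.

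For the diagonal blocks, $F_{D_0}$ contributes $R_{\nabla^-}$ in the $T$-slot and the curvature of $d^\theta$ acting on $\ad P_K$, which is the adjoint action $[F_\theta,\cdot]$. The term $A \wedge A$ is likewise block-diagonal, equal to $-\mathbb{F}^{\dagger}\wedge \mathbb{F}$ on $T$ and $-\mathbb{F}\wedge \mathbb{F}^{\dagger}$ on $\ad P_K$; the explicit contraction formulas then follow by a direct unwinding from the definitions $(i_V\mathbb{F})(W) = F_\theta(V,W)$ and $(i_V\mathbb{F}^{\dagger})(r) = -g^{-1}\IP{i_V F_\theta, r}$, using $(i_W i_V(\mathbb{F}^{\dagger}\wedge \mathbb{F}))(Z) = (i_V\mathbb{F}^{\dagger})(i_W\mathbb{F}(Z)) - (i_W\mathbb{F}^{\dagger})(i_V\mathbb{F}(Z))$ and the analogous identity for $\mathbb{F}\wedge \mathbb{F}^{\dagger}$. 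This part is purely algebraic and I would dispatch it quickly.

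The off-diagonal blocks come entirely from $d^{D_0}A$, so the real content of the lemma is the identification $\mathbb{I} = -d^{D_0}\mathbb{F}$; the $(1,2)$ entry $-\mathbb{I}^{\dagger}$ then follows automatically, since $D_0$ is orthogonal for $\IP{,}^0$ and $\mathbb{F}^{\dagger}$ is the $\IP{,}^0$-adjoint of $\mathbb{F}$, so taking adjoints commutes with $d^{D_0}$. To compute $d^{D_0}\mathbb{F}$, I expand the covariant exterior derivative of the $\Hom(T,\ad P_K)$-valued one-form $\mathbb{F}$, differentiating the value with $d^\theta$ and the $T$-argument with $\nabla^-$. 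Collecting terms and inserting the definition of $\nabla^{\theta,-}$, this produces $(\nabla^{\theta,-}_V F_\theta)(W,Z) - (\nabla^{\theta,-}_W F_\theta)(V,Z)$ together with a term $F_\theta(T^-(V,W),Z)$ coming from the torsion $T^-$ of $\nabla^-$.

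The main obstacle is the final bookkeeping needed to match the stated form, in which the covariant derivative falls on the third slot $Z$. Here I invoke the second Bianchi identity $d^\theta F_\theta = 0$ to rewrite $(\nabla^{\theta,-}_V F_\theta)(W,Z) - (\nabla^{\theta,-}_W F_\theta)(V,Z)$ as $(\nabla^{\theta,-}_Z F_\theta)(V,W)$ modulo torsion corrections, which is necessary precisely because $\nabla^{\theta,-}$ is not torsion-free. The torsion contributions then assemble, using the key fact that $\nabla^- = \nabla + \tfrac12 g^{-1} d^c\omega$ carries totally skew torsion $T^-(V,W) = g^{-1}(i_W i_V d^c\omega)$, into exactly $-F_\theta(V,g^{-1}i_Z i_W d^c\omega) + F_\theta(W,g^{-1}i_Z i_V d^c\omega)$. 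Keeping careful track of signs and index placements throughout this torsion/Bianchi rearrangement is the delicate step; the remainder is routine tensor algebra.
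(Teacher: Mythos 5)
Your proposal is correct and follows essentially the same route as the paper: the same splitting $D = D^0 + \bigl(\begin{smallmatrix} 0 & \mathbb{F}^{\dagger} \\ -\mathbb{F} & 0 \end{smallmatrix}\bigr)$ with $D^0 = \nabla^- \oplus d^\theta$, the same curvature expansion giving the block structure, the same algebraic unwinding of $\mathbb{F}^{\dagger}\wedge\mathbb{F}$ and $\mathbb{F}\wedge\mathbb{F}^{\dagger}$, and the same identification $\mathbb{I} = -d^{\theta,-}\mathbb{F}$ via the Bianchi identity $d^\theta F_\theta = 0$ together with the skew torsion $T_{\nabla^-}(W,Z) = g^{-1}i_Z i_W d^c\omega$. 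No gaps.
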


\begin{proof}
To compute the curvature, we write
$$
D= D^0 + \left(\begin{array}{cc}
    0 &  \mathbb{F}^{\dagger}\\
    -\mathbb{F} & 0\\
\end{array}\right)
$$
where $D^0 = \nabla^- \oplus d^\theta$. Then, we have
\begin{equation}\label{eq:FDabstract}
\begin{split}
F_{D} & =F_{D_{0}}+d^{D_{0}}\left(\begin{array}{cc}
    0 &  \mathbb{F}^{\dagger}\\
    -\mathbb{F} & 0\\
\end{array}\right)+\left(\begin{array}{cc}
    -\mathbb{F}^{\dagger}\wedge \mathbb{F} & 0\\
    0 & -\mathbb{F}\wedge \mathbb{F}^{\dagger}\\
\end{array}\right) \\
& =\left(\begin{array}{c c}
    R_{\nabla^{-}}-\mathbb{F}^{\dagger}\wedge \mathbb{F} & (d^{\theta,-}\mathbb{F})^{\dagger}\\
    -d^{\theta,-}\mathbb{F} & [F_{\theta},]-\mathbb{F}\wedge \mathbb{F}^{\dagger}
\end{array}\right)
\end{split}
\end{equation}
where $d^{\theta,-} \colon \Omega^1(\Hom(T,\ad P_K)) \to \Omega^2(\Hom(T,\ad P_K))$ is the exterior covariant derivative induced by $\nabla^-$ and $\theta$. The explicit formulae for $\mathbb{F}^{\dagger}\wedge \mathbb{F}$ and $\mathbb{F}\wedge \mathbb{F}^{\dagger}$ above are algebraic and are left to the reader. As for $d^{\theta,-}\mathbb{F}$, we have
\begin{align*}
- i_Wi_V d^{\theta,-}\mathbb{F}(Z) & = - d^\theta_V(F_\theta(W,Z)) + d^\theta_W(F_\theta(V,Z)) + F_\theta([V,W],Z)\\
& + F_\theta(W,\nabla^-_V Z) -  F_\theta(V,\nabla^-_W Z)\\
& = d^\theta_Z(F_\theta(V,W)) + F_\theta([V,Z],W) - F_\theta([W,Z],V)\\
& + F_\theta(W,\nabla^-_V Z) -  F_\theta(V,\nabla^-_W Z)\\
& =  (\nabla^{\theta,-}_ZF_\theta)(V,W) + F_\theta(W,T_{\nabla^-}(V,Z)) -  F_\theta(V,T_{\nabla^-}(W,Z))
\end{align*}
where $T_{\nabla^-}$ denotes the torsion tensor of $\nabla^-$ and in the second equality we have used the Bianchi identity $d^\theta F_\theta = 0$. Our formula for $\mathbb{I}$ follows now from $T_{\nabla^-}(W,Z) = g^{-1}i_Zi_Wd^c\omega$.
\end{proof}

We next calculate the \emph{second Ricci curvature} of the generalized Hermitian metric $\mathbf{G}$, defined by the expression
\begin{equation}\label{eq:secondRicciG}
S_{\mathbf{G}} \frac{\omega^n}{n} = F_\mathbf{G} \wedge \omega^{n-1}
\end{equation}
where $\omega$ is the Hermitian form in Lemma \ref{l:lifting2}. Similarly as before, the skew-Hermitian endomorphism $S_{\mathbf{G}}$ is given by the $\mathbb{C}$-linear extension of the second Ricci curvature $S_{D}$ of the connection $D$. To calculate $S_D$ below, we need the following technical lemma.

\begin{lemma}\label{lem:Hodgestar}
Let $(M,g)$ be a Riemannian manifold of even dimensions. Let $F \in \Omega^2$ and $H \in \Omega^3$ be differential forms. Then, the Hodge star operator satisfies:
$$
 i_{V}*(F \wedge * H) = \frac{1}{2}\sum_{i=1}^m F(e_i,g^{-1}i_V i_{e_i}H)
$$
for any vector field $V$ and any choice of $g$-orthonormal frame  $e_1, \ldots, e_{m}$ of $T$.
\end{lemma}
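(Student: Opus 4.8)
The plan is to prove this as a pointwise, purely algebraic identity. Both sides are $C^\infty(M)$-linear in each of $V$, $F$, $H$ and built from the fibrewise operations $*$, $\wedge$, and $i_{(\cdot)}$, so it suffices to verify the formula at a single point. There I fix a $g$-orthonormal coframe $e^1,\dots,e^m$ dual to $e_1,\dots,e_m$, so that $e^a = g(e_a)$, and expand $F = \tfrac12\sum_{a,b}F(e_a,e_b)\,e^a\wedge e^b$. The whole content then reduces to rewriting $*(F\wedge *H)$ in terms of iterated interior products of $H$, and the crux is the standard commutation rule between the Hodge star and the interior product.

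First I would record the two elementary facts I intend to use: on $p$-forms one has $*\,* = (-1)^{p(m-p)}$, and for any vector field $X$ and any homogeneous form $\beta$,
\[
*(g(X)\wedge\beta) = (-1)^{\deg\beta}\, i_X(*\beta),
\]
which is checked at once on monomials. Applying this with $X=e_b$, $\beta=*H$ (a form of degree $m-3$), the signs cancel:
\[
*(e^b\wedge *H) = (-1)^{m-3}\, i_{e_b}(*\,*H) = (-1)^{m-3}(-1)^{3(m-3)}\, i_{e_b}H = i_{e_b}H .
\]
Applying the rule a second time, now with $X=e_a$ and $\beta=e^b\wedge *H$ of degree $m-2$, gives
\[
*(e^a\wedge e^b\wedge *H) = (-1)^{m-2}\, i_{e_a}\big(*(e^b\wedge *H)\big) = (-1)^{m-2}\, i_{e_a}i_{e_b}H = (-1)^{m}\, i_{e_a}i_{e_b}H .
\]
This is exactly the point where the hypothesis that $M$ is even-dimensional enters: it forces $(-1)^m=1$. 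Summing against $F(e_a,e_b)$ yields the intermediate formula $*(F\wedge *H) = \tfrac12\sum_{a,b}F(e_a,e_b)\, i_{e_a}i_{e_b}H$.

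Finally I would contract with $V$ and invoke the antisymmetry of $H$. Since $i_V i_{e_a}i_{e_b}H = H(e_b,e_a,V)$ and $(e_b,e_a,V)\mapsto(e_a,V,e_b)$ is an even (cyclic) permutation, one has $H(e_b,e_a,V)=H(e_a,V,e_b)$, so
\[
i_V*(F\wedge *H) = \tfrac12\sum_{a,b}F(e_a,e_b)\,H(e_a,V,e_b) = \tfrac12\sum_{a}F\big(e_a,\, g^{-1} i_V i_{e_a}H\big),
\]
where in the last step I recognise $\sum_b H(e_a,V,e_b)\,e_b = g^{-1} i_V i_{e_a}H$ because the frame is orthonormal. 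This is the asserted identity. The only genuine obstacle is the sign bookkeeping in the two Hodge-star manipulations; carrying it out carefully is what reveals that the even-dimension hypothesis is precisely what clears the residual sign $(-1)^m$, after which the antisymmetrisation and metric-dual identification are routine.
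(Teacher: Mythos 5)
Your proof is correct and follows essentially the same route as the paper: both arguments rest on the commutation rule between the Hodge star and wedging with an orthonormal coframe element, applied twice to reduce $*(e^a\wedge e^b\wedge *H)$ to $i_{e_a}i_{e_b}H$, followed by bilinear expansion of $F$ and the antisymmetry of $H$. Your version is in fact slightly more careful, since you track the residual sign $(-1)^m$ explicitly and identify the even-dimension hypothesis as the point where it disappears, whereas the paper states the intermediate identity $*(e^i\wedge *\psi)=i_{e_i}\psi$ without comment.
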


\begin{proof}
For $e^i$ the dual frame, one has 
$$
*(e^i\wedge *\psi)=i_{e_i}\psi
$$
for $\psi\in \Omega^p$, and therefore
\begin{align*}
    *(e^i\wedge e^j\wedge * \psi ) 
    & = (-1)^p i_{e_j}i_{e_i}\psi.
\end{align*}
By bilinearity, we get
$$
*(F \wedge *H)=-\sum_{i<j}F (e_i,e_j)H(e_i,e_j,\cdot),
$$
and therefore
\begin{align*}
    i_{V}*(F\wedge * H)  = - \sum_{i<j}F(e_i,e_j)H(e_i,e_j,V) 
    = \frac{1}{2}\sum_i F(e_i,g^{-1}i_V i_{e_i}H).
\end{align*}

\end{proof}

Recall that the \emph{Bismut connection} of the Hermitian metric $g$ in Lemma \ref{l:lifting2} is given by (cf. Proposition \ref{t:Cherngeneralized})
$$
\nabla^B = \nabla - \tfrac{1}{2}g^{-1}d^c\omega.
$$
for $\nabla$ the Levi-Civita connection of $g$. This expression defines a unitary connection on the tangent bundle of $X = (M,J)$, and hence it induces a well-defined curvature on the anti-canonical bundle $- i \rho_B$, where $\rho_B$ is the \emph{Bismut Ricci form}. Explicitly, for a choice of $g$-orthonormal basis $e_1, \ldots, e_{2n}$ of $T$ at a point, one has
\begin{equation}\label{eq:secondRicci}
\rho_B(V,W) = \frac{1}{2} \sum_{j=1}^{2n} g(R_{\nabla^B}(V,W)Je_j,e_j).
\end{equation}

\begin{proposition}\label{prop:secondRicci}
The second Ricci form $S_{D}$ of the connection $D$ is given by
$$
S_{D} = \left(\begin{array}{cc}
    - g^{-1}(\rho_B + \IP{S_\theta,F_\theta}) & - \mathbb{S}^{\dagger}\\
    \mathbb{S} & [S_\theta,]
\end{array}\right) 
$$
where
\begin{align*}
\mathbb{S}(V) & =i_{JV}\Bigg{(}-d^{\theta *} F_\theta- i_{\theta_\omega^\sharp}F_\theta + *(F_\theta\wedge * d^c\omega)) \Bigg{)},
\end{align*}
for $d^{\theta *}$ the adjoint of $d^\theta$ and $\theta_\omega=Jd^*\omega$ the Lee form of $g$.
\end{proposition}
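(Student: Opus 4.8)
The plan is to read off the second Ricci form directly from its definition \eqref{eq:secondRicciG}, namely $S_D\,\omega^n/n = F_D\wedge\omega^{n-1}$, so that $S_D$ is the $\omega$-trace (the metric contraction $\Lambda$ adjoint to $\wedge\,\omega$) of the curvature $F_D$ computed in Lemma \ref{lemma:connectioncurvature}. Since $D$ is the $\mathbb{C}$-linear extension of a real connection and $\Lambda$ acts only on the form slots, it suffices to contract each of the four entries of the matrix for $F_D$ with respect to the splitting \eqref{eq:V-iso}, and then identify the resulting tensors with the expressions in the statement. I would therefore treat the diagonal and off-diagonal blocks separately.

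For the bottom-right block, $\Lambda$ commutes with the algebraic operations, so $\Lambda[F_\theta,\cdot] = [\Lambda F_\theta,\cdot] = [S_\theta,\cdot]$, where $S_\theta = \Lambda F_\theta$ is the mean curvature of $\theta$. It then remains to show $\Lambda(\mathbb{F}\wedge\mathbb{F}^{\dagger}) = 0$; using the explicit algebraic formula in Lemma \ref{lemma:connectioncurvature} together with the fact that $F_\theta$ is of type $(1,1)$ (hence $J$-invariant), the two terms cancel after summing over a $J$-adapted orthonormal frame. The same $(1,1)$-symmetry applied to $\mathbb{F}^{\dagger}\wedge\mathbb{F}$ gives $\Lambda(\mathbb{F}^{\dagger}\wedge\mathbb{F}) = g^{-1}\IP{S_\theta,F_\theta}$, which supplies the $\IP{S_\theta,F_\theta}$ correction in the top-left entry. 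For the off-diagonal block I would contract the two form slots of $\mathbb{I}$ with $\omega$: the term $(\nabla^{\theta,-}_Z F_\theta)(V,W)$ produces, via a Chern–Kähler identity with torsion, the codifferential $-d^{\theta *}F_\theta$ together with the Lee-form correction $-i_{\theta_\omega^\sharp}F_\theta$ coming from the failure of $\omega$ to be closed, while the two remaining terms $-F_\theta(V,g^{-1}i_Z i_W d^c\omega)+F_\theta(W,g^{-1}i_Z i_V d^c\omega)$ are handled by Lemma \ref{lem:Hodgestar} and yield precisely $*(F_\theta\wedge * d^c\omega)$. The overall factor $i_{JV}$ appears because the $\omega$-trace pairs each frame vector with its image under $J$, and the $\IP{,}^0$-adjoint symmetry transposes this contribution to $-\mathbb{S}^{\dagger}$ in the upper block.

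The top-left block is the main obstacle. After subtracting the $\mathbb{F}^{\dagger}\wedge\mathbb{F}$ contribution, one is left with $\Lambda R_{\nabla^-}$, the \emph{second} Ricci contraction of $\nabla^- = \nabla + \tfrac12 g^{-1}d^c\omega$, and this must be identified with $-g^{-1}\rho_B$, where $\rho_B$ is the anti-canonical (first Ricci) curvature \eqref{eq:secondRicci} of the Bismut connection $\nabla^B = \nabla - \tfrac12 g^{-1}d^c\omega$, which carries the \emph{opposite} skew torsion. This is exactly the extension of Bismut's identity: the curvature symmetry $R_{\nabla^-}(x,y,z,w) = R_{\nabla^B}(z,w,x,y)$ interchanges the form and endomorphism slots, so the second Ricci of $\nabla^-$ equals the first Ricci form of $\nabla^B$, up to the musical isomorphism $g^{-1}$ and sign. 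Combining $\Lambda R_{\nabla^-} = -g^{-1}\rho_B$ with $-\Lambda(\mathbb{F}^{\dagger}\wedge\mathbb{F}) = -g^{-1}\IP{S_\theta,F_\theta}$ gives the top-left entry $-g^{-1}(\rho_B + \IP{S_\theta,F_\theta})$, completing the matrix. The delicate points to check carefully are the sign and normalization in the Bismut symmetry (where the opposite-torsion convention is essential) and the precise torsion corrections converting $\nabla^{\theta,-}F_\theta$ into $d^{\theta *}F_\theta$, since these are where a spurious factor or a missing Lee-form term would most easily slip in.
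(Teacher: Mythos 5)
Your overall strategy---contracting each block of the curvature matrix of Lemma \ref{lemma:connectioncurvature} against $\omega$---is exactly the paper's, and your treatment of the bottom-right and off-diagonal blocks is in line with it (the paper likewise shows that the trace of $\mathbb{F}\wedge\mathbb{F}^{\dagger}$ vanishes using $F_\theta=F_\theta^{1,1}$, and reduces the trace of $\mathbb{I}$ to $\mathbb{S}$ via the formulae for $\nabla J$, $d^{\theta*}$, the Lee form and Lemma \ref{lem:Hodgestar}). The gap is in the top-left block, where both of your intermediate identities are false. First, the symmetry $g(R_{\nabla^-}(x,y)z,w)=g(R_{\nabla^B}(z,w)x,y)$ does \emph{not} hold in this setting: the correct statement, which is \eqref{eq:BismutHulleq} in the paper, carries the correction term $\tfrac{1}{2}dd^c\omega(x,y,z,w)$, and here $dd^c\omega=-\IP{F_\theta\wedge F_\theta}$ is nonzero by the Bianchi identity of Lemma \ref{l:lifting2}. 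Hence the second Ricci contraction of $\nabla^-$ is not $-g^{-1}\rho_B$. Second, the claimed identity $\Lambda(\mathbb{F}^{\dagger}\wedge\mathbb{F})=g^{-1}\IP{S_\theta,F_\theta}$ fails: tracing the explicit formula gives $g\big(\Lambda(\mathbb{F}^{\dagger}\wedge\mathbb{F})(V),W\big)=\sum_i\IP{F_\theta(Je_i,W),F_\theta(e_i,V)}$, and already for the scalar-valued $(1,1)$-form $F=e^1\wedge e^3+e^2\wedge e^4$ on $\CC^2$ (with $e_2=Je_1$, $e_4=Je_3$) one has $\Lambda F=0$ while $\sum_i F(Je_i,e_3)F(e_i,e_1)=1$.

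These two errors happen to offset each other, which is why you land on the correct matrix entry, but that cancellation is precisely the nontrivial content of the computation and your argument never establishes it. The paper's route is: apply the corrected identity \eqref{eq:BismutHulleq}, substitute $dd^c\omega=-\IP{F_\theta\wedge F_\theta}$, and then decompose the trace of $\IP{F_\theta\wedge F_\theta}$ into the $\IP{S_\theta,F_\theta}$ term plus a quadratic term that exactly cancels the trace of $\mathbb{F}^{\dagger}\wedge\mathbb{F}$. In particular the Bianchi identity is an essential input for the top-left entry, and it appears nowhere in your proposal; as written, the proof of that entry does not go through.
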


\begin{proof}
Recall the alternative expression for the second Ricci form
$$
S_D = \frac{1}{2}\sum_{j=1}^{2n} F_D(e_j,J e_j)
$$
for a choice of $g$-orthonormal basis $e_1, \ldots, e_{2n}$ of $T$ at a point. Using this and applying Lemma \ref{lemma:connectioncurvature}, we have
$$
S_{D} = \left(\begin{array}{cc}
    S_{\nabla^{-}}-\tfrac{1}{2}\mathbb{F}^{\dagger}\wedge \mathbb{F}(e_i,Je_i)& - \tfrac{1}{2}\mathbb{I}^{\dagger}(e_i,Je_i)\\
    \tfrac{1}{2}\mathbb{I}(e_i,Je_i) & [S_\theta,] - \tfrac{1}{2}\mathbb{F}\wedge \mathbb{F}^{\dagger}(e_i,Je_i)
\end{array}\right).
$$
We first calculate
\begin{align*}
g(i_{Je_i}i_{e_i}\mathbb{F}^{\dagger}\wedge \mathbb{F}(V),W) & = \IP{F_\theta(Je_i,W),F_\theta(e_i,V)} - \IP{F_\theta(e_i,W),F_\theta(Je_i,V)}
\end{align*}
Combining this with the identity (see the proof of \cite[Proposition 3.21]{GaSt})
\begin{equation}\label{eq:BismutHulleq}
g(R_{\nabla^-}(V_1,V_2)V_3,V_4) - g(R_{\nabla^B}(V_3,V_4),V_1,V_2) = \tfrac{1}{2}dd^c \omega(V_1,V_2,V_3,V_4)
\end{equation}
and Lemma \ref{l:lifting2}, we also obtain
\begin{align*}
g(S_{\nabla^-}(V),W) & = \tfrac{1}{2} g(R_{\nabla^-}(e_i,Je_i)V,W)\\
& = \tfrac{1}{2} g(R_{\nabla^B}(V,W)e_i,Je_i) + \tfrac{1}{4}dd^c\omega(e_i,Je_i,V,W)\\
& = - \rho_B(V,W) - \tfrac{1}{4}\IP{F_\theta \wedge F_\theta}(e_i,Je_i,V,W)\\
& = - \rho_B(V,W) - \tfrac{1}{2}\IP{i_{e_i}F_\theta \wedge F_\theta}(Je_i,V,W)\\
& = - \rho_B(V,W) - \IP{S_\theta , F_\theta(V,W)} + \tfrac{1}{2}\IP{i_{e_i}F_\theta \wedge i_{J e_i}F_\theta}(V,W)\\
& = - \rho_B(V,W) - \IP{S_\theta , F_\theta(V,W)} + \tfrac{1}{2}\IP{F_\theta(e_i,V),F_\theta(Je_i,W)}\\
&  - \tfrac{1}{2}\IP{F_\theta(e_i,W),F_\theta(Je_i,V)}\\
& = - \rho_B(V,W) - \IP{S_\theta , F_\theta(V,W)} + \tfrac{1}{2}g(i_{Je_i}i_{e_i}\mathbb{F}^{\dagger}\wedge \mathbb{F}(V),W),
\end{align*}
as claimed. Using again Lemma \ref{l:lifting2}, in particular $F_\theta = F_\theta^{1,1}$, we also have 
\begin{align*}
i_{Je_i}i_{e_i}\mathbb{F}\wedge \mathbb{F}^{\dagger}(r) & = F_\theta(Je_i,g^{-1}\IP{i_{e_i}F_\theta,r}) - F_\theta(e_i,g^{-1}\IP{i_{Je_i}F_\theta,r})\\
& = - F_\theta(e_i,J g^{-1}\IP{i_{e_i}F_\theta,r}) - F_\theta(e_i,g^{-1}\IP{F_\theta(Je_i,),r})\\
& =-2F_\theta(e_i,g^{-1}\langle F_\theta(Je_i,),r\rangle)\\
& = -2F_\theta(e_i,e_j)\langle F_\theta(Je_i,e_j),r\rangle.
\end{align*}
Finally, the last expression vanishes using again $F_\theta=F_\theta^{1,1}$ and symmetry considerations.

In the computation of the remaining term, we will use the following standard expressions for the covariant derivative of the almost complex structure $J$, the adjoint of $d^\theta$, and the Lee form: 
\begin{align*}
    (\nabla_V J) W & = \frac{1}{2}g^{-1}(d\omega(V,W,\cdot)-d^c\omega(JV,W,\cdot)),\\
d^{\theta *} F_\theta & = - i_{e_i} \nabla^{\theta,g}_{e_i} F_\theta,\\ 
\theta_\omega (V) & = \tfrac{1}{2} d\omega(e_i,Je_i,V).
\end{align*}
where $\nabla^{\theta,g}$ is the covariant derivative with respect to the Levi-Civita connection $\nabla$ and $\theta$. Combining this with \eqref{eq:FDabstract}, we conclude that:
\begin{align*}
i_{Je_i}i_{e_i}\mathbb{I}(V) & = -i_{Je_i}i_{e_i}d^{\theta,-}\mathbb{F}(V)\\
& = i_{Je_i}i_{e_i}d^{\theta}(i_VF_\theta) - F_\theta(e_i,\nabla^-_{Je_i}V)+F_\theta(Je_i,\nabla^-_{e_i}V)\\
& = 2d^\theta_{e_i}(F_\theta(e_i,JV))+F_\theta([e_i,Je_i],V)+2F_\theta(Je_i,\nabla^-_{e_i}V)\\
& =2(\nabla^{\theta,g}_{e_i}F_\theta)(e_i,JV)+2F_\theta(\nabla_{e_i}e_i,JV)+2F_\theta(e_i,\nabla_{e_i}JV)\\
&+2F_\theta(\nabla_{e_i}Je_i,V)+2F_\theta(Je_i,(\nabla^--\nabla)_{e_i} V)+2F_\theta(Je_i,\nabla_{e_i}V)\\
& =- 2 d^{\theta *}F_\theta(JV)+2F_\theta((\nabla_{e_i}J)e_i,V)+2F_\theta(Je_i,(\nabla^--\nabla)_{e_i}V)\\
& +2F_\theta(e_i,(\nabla_{e_i}J)V)\\
& =-2d^{\theta *}F_\theta(JV)-2F_\theta(\theta_\omega^\sharp,JV)+F_\theta(e_i,g^{-1}i_{JV}i_{e_i}d^c\omega).
\end{align*}
The statement follows from Lemma \ref{lem:Hodgestar}.
\end{proof}

\section{Coupled Hermitian-Einstein metrics}\label{sec:HE}

\subsection{The coupled Hermitian-Einstein system}\label{sec:BHE}

We introduce next a relaxed version of the Hull-Strominger system \eqref{eq:HSintro}, motivated by Proposition \ref{prop:secondRicci}. Our definition will provide a natural class of generalized Hermitian metrics $\mathbf{G}$ satisfying the following Hermitian-Einstein type equation
\begin{equation}\label{eq:HE}
F_\mathbf{G} \wedge \omega^{n-1} = 0.
\end{equation}
Observe that, in the previous expression, the classical metric $g$ appears both in the definition of the generalized Hermitian metric $\mathbf{G}$ and in the trace operator $ \wedge \omega^{n-1}$. Motivated by this additional non-linearity in \eqref{eq:HE}, we will call these new equations the \emph{coupled Hermitian-Einstein system} (cf. \cite{GaJoSt}).

We follow the setup in Section \ref{sec:BC}, which we recall briefly. Let $G$ be a complex reductive Lie group with Lie algebra $\mathfrak{g}$. Fix a non-degenerate bi-invariant symmetric bilinear form $\IP{,}: \mathfrak{g} \otimes \mathfrak{g} \to \mathbb{C}$ such that $\IP{\mathfrak{k},\mathfrak{k}} \subset \RR$ for any compact Lie subalgebra $\mathfrak{k} \subset \mathfrak{g}$.

\begin{definition}\label{def:BHEsystem}
Let $X$ be a complex manifold of complex dimension $n$ endowed with a holomorphic principal $G$-bundle $P$. We say that a pair $(g,h)$, where $g$ is a Hermitian metric on $X$ and $h$ is a reduction of $P$ to a maximal compact subgroup, satisfies the \emph{coupled Hermitian-Einstein system} if
\begin{equation}\label{eq:BHE}
\begin{split}
F_h\wedge \omega^{n-1} & = \frac{z}{n} \omega^n,\\
\rho_{B} + \langle z , F_h\rangle & = 0,\\
dd^{c}\omega+\langle F_h\wedge F_h\rangle & = 0,\\
\end{split}
\end{equation}
where 
$F_h$ is the curvature of the Chern connection of $h$, $\rho_{B}$ is the Bismut-Ricci form of the metric $g$, and $z$ is a central element in the Lie algebra $\mathfrak{k}$.
\end{definition}

\begin{remark}
Assuming that $X$ is compact, the system \eqref{eq:BHE} imposes some obvious necessary constraints, namely
$$
2\pi c_1(X) + [\langle z , F_h\rangle] = 0 \in H^2_{dR}(X,\RR),\qquad p_1(P) = 0 \in H^{2,2}_{BC}(X,\RR),
$$
where $[\langle z , F_h\rangle]$ is the Chern-Weyl invariant associated to $P$ via the character $\langle z, \rangle \colon \mathfrak{k} \to \mathbb{R}$ and $p_1$ is defined as in \eqref{eq:p1(P)=0dRC}. Furthermore, by the extension of the Donaldson-Uhlenbeck-Yau Theorem to Hermitian manifolds \cite{Buchdahl,LiYauHYM,lt}, the bundle $P$ must be slope polystable with respect to the unique Gauduchon metric in the conformal class of $g$ (cf. \cite{AnBiwas}). 
\end{remark}

This section is devoted to the study of basic structural properties of the coupled Hermitian-Einstein system \eqref{eq:BHE}. Our first goal is to prove that any solution induces a generalized Hermitian metric $\mathbf{G}$ on a Bott-Chern algebroid which satisfies the Hermitian-Einstein equation \eqref{eq:HE}. 

The following lemma can be compared with the classical result which states that a Hermitian-Yang-Mills connection is Yang-Mills, provided that the background metric is K\"ahler. The analogue in Hermitian geometry is apparently well-known to experts but, since we have not been able to find it in the literature, we shall provide a complete proof here.

\begin{lemma}\label{lemma:pHYM1}
Let $(X,g)$ be a Hermitian manifold of complex dimension $n$ endowed with a holomorphic principal $G$-bundle $P$. Let $h$ be a Hermitian-Einstein reduction on $P$, that is, satisfying
$$
F_h \wedge \omega^{n-1} = \frac{z}{n} \omega^n
$$
for $z$ a central element in the Lie algebra $\mathfrak{k}$. Then, the following equation is satisfied
$$
d^{h*} F_h + i_{\theta_\omega^\sharp}F_h - *(F_\theta\wedge * d^c\omega) = 0.
$$
\end{lemma}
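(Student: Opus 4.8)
The plan is to compute the codifferential directly through $d^{h*} = -*d^h*$, feeding in the only two structural facts at our disposal. Writing $\theta$ for the Chern connection of $h$, so that $F_\theta = F_h$ is of type $(1,1)$, the Bianchi identity $d^hF_h=0$ splits into $\partial^hF_h=0$ and $\dbar^hF_h=0$. Since $\alpha\wedge\omega^{n-1}=\tfrac1n(\Lambda\alpha)\,\omega^n$ for every $(1,1)$-form $\alpha$, the Hermitian-Einstein equation is equivalent to $\Lambda F_h = z$; and as $z$ is central in $\mathfrak{k}$ it is fixed by the adjoint action, hence defines a $d^h$-parallel section of $\ad P$, so that $d^hz=0$.

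First I would Hodge-dualize $F_h$ via the Lefschetz decomposition. For any $(1,1)$-form $\alpha$ one has
\begin{equation*}
*\alpha = -\frac{1}{(n-2)!}\,\omega^{n-2}\wedge\alpha + \frac{\Lambda\alpha}{(n-1)!}\,\omega^{n-1},
\end{equation*}
as one checks on the trace and primitive parts (for $n=2$ this is the anti-self-duality of primitive $(1,1)$-forms). Applying it to $F_h$ and inserting $\Lambda F_h=z$ expresses $*F_h$ through $\omega^{n-2}\wedge F_h$ and $\omega^{n-1}$. Differentiating with $d^h$, every curvature derivative drops out by $d^hF_h=0$ and $d^hz=0$, and only the derivatives of the powers of $\omega$ survive, so that (with the convention that a negative power of $\omega$ is absent)
\begin{equation*}
d^h*F_h = -\frac{1}{(n-3)!}\,\omega^{n-3}\wedge d\omega\wedge F_h + \frac{z}{(n-2)!}\,\omega^{n-2}\wedge d\omega .
\end{equation*}
Consequently $d^{h*}F_h = -*d^h*F_h$ is reduced to the purely algebraic data $F_h$, $\omega$ and $d\omega$.

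It then remains to match $-*d^h*F_h$ with $-i_{\theta_\omega^\sharp}F_h + *(F_h\wedge *d^c\omega)$, a pointwise identity among $F_h$, $\omega$ and $d\omega$ that I would verify in a $g$-unitary coframe. Lemma \ref{lem:Hodgestar}, applied with $F=F_h$ and $H=d^c\omega$, rewrites the target as $i_V*(F_h\wedge *d^c\omega)=\tfrac12\sum_i F_h(e_i,g^{-1}i_Vi_{e_i}d^c\omega)$, the very contraction produced when one applies $*$ to $\omega^{n-3}\wedge d\omega\wedge F_h$; here the replacement of $d\omega$ by $d^c\omega$ is dictated by the $(1,1)$-type of $F_h$, since the $(2,1)$- and $(1,2)$-parts of $d\omega$ contribute with opposite relative signs. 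The remaining Lee-form term $i_{\theta_\omega^\sharp}F_h$ is produced by the trace part $z=\Lambda F_h$, using $\theta_\omega(V)=\tfrac12 d\omega(e_i,Je_i,V)$ as in the proof of Proposition \ref{prop:secondRicci}.

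The main obstacle is precisely this last step: taking $*$ of $\omega^{n-3}\wedge d\omega\wedge F_h$ and of $\omega^{n-2}\wedge d\omega$ and organizing the resulting contractions so that the Lee-form and $d^c\omega$ terms separate with the correct signs and normalizations. I expect no conceptual difficulty, as this is the same bookkeeping already carried out for the off-diagonal block $\mathbb{I}$ in the proof of Proposition \ref{prop:secondRicci}; indeed the lemma is equivalent to the vanishing of the block $\mathbb{S}$ of $S_D$ under the Hermitian-Einstein condition, and could alternatively be deduced by combining that computation with $\Lambda F_h=z$.
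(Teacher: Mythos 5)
Your opening reduction is correct and genuinely different from the paper's argument: the paper computes $d^{h*}F_h(V)=-(\nabla^{h,g}_{e_i}F_h)(e_i,V)$ directly in a $g$-orthonormal frame, shuffling covariant derivatives against the $(1,1)$-type of $F_h$ and the Bianchi identity until only $\nabla J$, the Lee form and $d^c\omega$ survive (the Hermitian--Einstein hypothesis entering through the vanishing of $d^h_{JV}(F_h(e_i,Je_i))$), whereas you dualize $F_h$ by the Weil formula $*\alpha=-\tfrac{1}{(n-2)!}\,\omega^{n-2}\wedge\alpha+\tfrac{\Lambda\alpha}{(n-1)!}\,\omega^{n-1}$ and differentiate. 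Your formula for $d^h{*}F_h$ is right, and you correctly isolate where $d^hF_h=0$ and $d^hz=0$ are used.

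The gap is that the step you defer is essentially the entire content of the lemma, and your description of how it resolves is inaccurate, so it cannot be passed off as bookkeeping already done elsewhere. First, the Lee-form term $i_{\theta_\omega^\sharp}F_h=F_h(\theta_\omega^\sharp,\cdot)$ depends on the full curvature, not only on $z=\Lambda F_h$, so it cannot be ``produced by the trace part'': for instance, when $n=2$ and $F_h$ is primitive the $\omega^{n-3}$-term is absent and $z=0$, yet one must still verify $i_{\theta_\omega^\sharp}F_h=*(F_h\wedge * d^c\omega)$, with the Lee-form contribution arising entirely from the right-hand side. In general the remaining identity, $\tfrac{1}{(n-3)!}*(\omega^{n-3}\wedge d\omega\wedge F_h)-\tfrac{z}{(n-2)!}*(\omega^{n-2}\wedge d\omega)=-i_{\theta_\omega^\sharp}F_h+*(F_h\wedge*d^c\omega)$, mixes trace and primitive contributions on both sides and requires computing the Hodge star of $\omega^{n-3}\wedge d\omega\wedge F_h$; this is a computation of comparable length to the paper's whole proof and is not covered by Lemma \ref{lem:Hodgestar}, which only treats $*(F\wedge *H)$. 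Second, your proposed fallback is circular: the computation of the block $\mathbb{I}$ in the proof of Proposition \ref{prop:secondRicci} merely identifies $\mathbb{S}$ as $i_{J\cdot}$ applied to the very expression whose vanishing is asserted here; it does not show that this expression vanishes under the Hermitian--Einstein condition --- indeed Lemma \ref{lem:HE} invokes the present lemma precisely in order to conclude $\mathbb{S}=0$. To complete your route you must carry out the Hodge-star identity above explicitly; otherwise one falls back on the paper's direct frame computation.
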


\begin{proof}
The curvature form $F_h$ satisfies
$$
F_h=F_h^{1,1} \qquad d^h F_h=0
$$
and the Hermitian-Einstein condition can be rewritten as
$$
\frac{1}{2}\sum_i F_h(e_i,Je_i) = z.
$$
Using the conditions above, we obtain
\begin{align*}
    d^{h*} F_h(V) & = -(\nabla^{h,g}_{e_i}F_h)(e_i,V)\\
    & = -d^h_{e_i}(F_h(Je_i,JV))+F_h(\nabla_{e_i}e_i,V)+F_h(e_i,\nabla_{e_i}V)\\
    & = - d^hF_h(e_i,Je_i,JV) - d^h_{Je_i}(F_h(e_i,JV)) + d^h_{JV}(F_h(e_i,Je_i))\\
    & -F_h([e_i,Je_i],JV)+F_h([e_i,JV],Je_i) -F_h([Je_i,JV],e_i)\\
    & +F_h(\nabla_{e_i}e_i,V)+F_h(e_i,\nabla_{e_i}V)\\
    & = (\nabla^{h,g}_{Je_i}F_h)(Je_i,V)) + F_h(\nabla_{Je_i} Je_i,V) + F_h (Je_i,\nabla_{Je_i}V)\\
    & -F_h([e_i,Je_i],JV)+F_h([e_i,JV],Je_i) -F_h([Je_i,JV],e_i)\\
    & +F_h(\nabla_{e_i}e_i,V)+F_h(e_i,\nabla_{e_i}V)\\
    & = -d^{h *} F_h(V)+2F_h(\nabla_{e_i}e_i,V)+2F_h(e_i,\nabla_{e_i}V)\\
    & -2F_h(\nabla_{e_i}Je_i,JV) +2F_h(\nabla_{e_i}JV,Je_i)+2F_h(\nabla_{JV}Je_i,e_i) .
\end{align*}
Collecting the terms $d^{h *} F_h(V)$  and using again that $F_h = F_h^{1,1}$, we have
\begin{align*}
d^{h*} F_h(V)  & = -F_h((\nabla_{e_i}J)e_i,JV)+F_h((\nabla_{e_i}J)V,Je_i)+F_h(\nabla_{JV}Je_i,e_i).
\end{align*}
Using elementary symmetry properties, which imply
$$
F_h(\nabla_{JV}Je_i,e_i) = F_h(e_j,e_i)g(\nabla_{JV}Je_i,e_j)  = 0,
$$
combined with the formulae for $\nabla J$ and $\theta_\omega$ in the proof of Proposition \ref{prop:secondRicci}, which imply
\begin{align*}
F_h((\nabla_{e_i}J)e_i,JV)  & = F_h(\theta_\omega^\sharp,V),\\
F_h((\nabla_{e_i}J)V,J e_i)  & = \tfrac{1}{2} F_h(e_i,g^{-1}i_V i_{e_i} d^c\omega),
\end{align*}
the claim now follows from Lemma \ref{lem:Hodgestar}.
\end{proof}

In the next result we characterize the Hermitian-Einstein equation \eqref{eq:HE} for a generalized Hermitian metric.  

\begin{lemma}\label{lem:HE}
Let $X$ be a complex manifold endowed with a holomorphic principal $G$-bundle $P$. Assume that a pair $(g,h)$ satisfies the Bianchi identity
\begin{equation}\label{eq:Bianchi}
dd^c \omega + \IP{F_h \wedge F_h} = 0.
\end{equation}
Consider the holomorphic vector bundle underlying the Bott-Chern algebroid $\mathcal{Q}_{P,2i\partial\omega,\theta^h}$ (see Example \ref{def:Q0}) endowed with the (possibly) indefinite Hermitian metric $\mathbf{G}$ in Lemma \ref{t:Ggeneralized1}. Then, $\mathbf{G}$ solves the Hermitian-Einstein equation \eqref{eq:HE} if and only if the following conditions hold
\begin{equation}\label{eq:HEexplicit}
\begin{split}
[S_h, ] & =0,\\
d^{h*} F_h + i_{\theta_\omega^\sharp}F_h - *(F_\theta\wedge * d^c\omega) & = 0,\\
\rho_B + \IP{S_h,F_h} & = 0,
\end{split}
\end{equation}
where $S_h$ denotes the second Ricci curvature of $h$.

\end{lemma}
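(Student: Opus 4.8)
The plan is to deduce the statement directly from the block formula for the second Ricci curvature established in Proposition \ref{prop:secondRicci}, once the relevant objects are correctly identified. First I would check that the hypotheses set up exactly the situation of that proposition: under the Bianchi identity \eqref{eq:Bianchi}, the pair $(g,h)$ defines a generalized metric on the smooth string algebroid underlying $\mathcal{Q}_{P,2i\partial\omega,\theta^h}$ which is compatible with $J$, with associated data $H = -d^c\omega$ and connection $\theta = \theta^h$, the Chern connection of $h$ (Lemma \ref{l:lifting2}). Thus $\mathbf{G}$ is precisely the generalized Hermitian metric of Definition \ref{d:generalizedmetriHerm} and Lemma \ref{t:Ggeneralized1}, and Proposition \ref{prop:secondRicci} applies verbatim after substituting $F_\theta = F_h$ and $S_\theta = S_h$.

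Next I would unwind the Hermitian-Einstein condition \eqref{eq:HE}. By the definition \eqref{eq:secondRicciG} of the second Ricci curvature one has $F_\mathbf{G} \wedge \omega^{n-1} = S_\mathbf{G}\,\omega^n/n$, and since $\omega^n$ is a nowhere-vanishing volume form this gives $F_\mathbf{G} \wedge \omega^{n-1} = 0$ if and only if $S_\mathbf{G} = 0$. Because $S_\mathbf{G}$ is the $\mathbb{C}$-linear extension of the real endomorphism $S_D$ of $V_- \cong T \oplus \ad P_K$, this is in turn equivalent to $S_D = 0$.

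The core of the argument is then to impose the vanishing of $S_D$ block by block in the matrix of Proposition \ref{prop:secondRicci}. The two diagonal blocks give $-g^{-1}(\rho_B + \IP{S_h, F_h}) = 0$ and $[S_h, \cdot] = 0$, that is, the third and first equations of \eqref{eq:HEexplicit}. For the off-diagonal blocks, $\mathbb{S}$ and $-\mathbb{S}^{\dagger}$ are mutual adjoints with respect to $\IP{,}^0$, so they vanish simultaneously, precisely when $\mathbb{S} = 0$. Since $\mathbb{S}(V) = i_{JV}(\,\cdot\,)$ and $V \mapsto JV$ is a bundle isomorphism of $T$, the condition $\mathbb{S} = 0$ is equivalent to the vanishing of the $\ad P_K$-valued one-form $-d^{h*}F_h - i_{\theta_\omega^\sharp}F_h + *(F_h \wedge *d^c\omega)$, which is the second equation of \eqref{eq:HEexplicit}. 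Assembling the three equivalences proves the lemma.

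I do not expect a genuine obstacle here, since the hard computation is already contained in Proposition \ref{prop:secondRicci}; the only steps needing care are verifying that the generalized metric built from $(g,h)$ is the one to which that proposition applies, so that $\theta$ may legitimately be replaced by $\theta^h$, and using the skew-Hermitian symmetry of $S_D$ to collapse the two off-diagonal blocks into a single equation. It is worth noting, finally, that the middle equation of \eqref{eq:HEexplicit} is the exact conclusion of Lemma \ref{lemma:pHYM1} and hence holds automatically once $h$ is Hermitian-Einstein, which clarifies why in the intended applications the geometric content of \eqref{eq:HE} is carried by the first and third equations.
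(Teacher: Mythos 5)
Your argument is correct and is exactly the paper's proof: the paper simply observes that $D^{\mathbf{G}}$ is the $\mathbb{C}$-linear extension of the $\IP{,}^0$-orthogonal connection $D$ and that the equivalence then follows from the block formula for $S_D$ in Proposition \ref{prop:secondRicci}, which is the block-by-block reading you carry out. Your added remarks (the invertibility of $V\mapsto JV$ collapsing $\mathbb{S}=0$ to the second equation, and the redundancy of that equation via Lemma \ref{lemma:pHYM1} once $h$ is Hermitian--Einstein) are accurate and consistent with the paper's surrounding discussion.
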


\begin{proof}
By construction, the Chern connection of $\mathbf{G}$ is the $\mathbb{C}$-linear extension of the $\IP{,}^0$-orthogonal connection $D$ in Section \ref{sec:curvature}. Then, the proof is straightforward from Proposition \ref{prop:secondRicci}.
\end{proof}


As one can directly see from \eqref{eq:HEexplicit}, the Hermitian-Einstein condition \eqref{eq:HE} for a generalized Hermitian metric is very sensitive to the choice of quadratic Lie algebra $(\mathfrak{g},\IP{,})$. For example, when $\mathfrak{g}$ is abelian the first condition is trivially satisfied. In particular, when $\mathfrak{g} = \{0\}$, Lemma \ref{lem:HE} recovers \cite[Proposition 4.4]{GaJoSt} for pluriclosed Hermitian metrics. On the other extreme, when $\mathfrak{g}$ is semisimple, the first equation implies that $S_h = 0$ and hence the second equation is satisfied by Lemma \ref{lemma:pHYM1}. Furthermore, in this case one has $\rho_B = 0$ and hence the equations \eqref{eq:HEexplicit} combined with the Bianchi identity \eqref{eq:Bianchi} are equivalent to the coupled Hermitian-Einstein system \eqref{eq:BHE} with $z = 0$.

We are ready to prove the first main result of this section.

\begin{proposition}\label{prop:BHE-HE}
Let $X$ be a complex manifold endowed with a holomorphic principal $G$-bundle $P$. Assume that $(g,h)$ solves the coupled Hermitian-Einstein system \eqref{eq:BHE}. Consider the holomorphic vector bundle $\mathcal{Q}_{P,2i\partial\omega,\theta^h}$ endowed with the (possibly) indefinite Hermitian metric $\mathbf{G}$ in Lemma \ref{t:Ggeneralized1}. Then, $\mathbf{G}$ solves the Hermitian-Einstein equation \eqref{eq:HE}.
\end{proposition}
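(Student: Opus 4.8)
The plan is to reduce everything to the already-established Lemma \ref{lem:HE}, which characterizes \eqref{eq:HE} for a generalized Hermitian metric $\mathbf{G}$ under the assumption of the Bianchi identity \eqref{eq:Bianchi}. The first thing I would observe is that this hypothesis is available for free: the third equation of the coupled Hermitian-Einstein system \eqref{eq:BHE} is \emph{verbatim} the Bianchi identity \eqref{eq:Bianchi}. Hence $\mathbf{G}$ solves the Hermitian-Einstein equation \eqref{eq:HE} if and only if the three conditions \eqref{eq:HEexplicit} hold, and the task becomes to translate the first two equations of \eqref{eq:BHE} into \eqref{eq:HEexplicit}.

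Next I would identify the second Ricci curvature $S_h$ of the reduction $h$. By its defining relation $S_h \tfrac{\omega^n}{n} = F_h \wedge \omega^{n-1}$, analogous to \eqref{eq:secondRicciG}, the first line of \eqref{eq:BHE} gives at once $S_h = z$. Since $z$ is a \emph{central} element of $\mathfrak{k}$, this yields the first condition $[S_h,\cdot] = [z,\cdot] = 0$ of \eqref{eq:HEexplicit}. Substituting $S_h = z$ into the second equation $\rho_B + \langle z, F_h\rangle = 0$ of \eqref{eq:BHE} then produces precisely the third condition $\rho_B + \IP{S_h,F_h} = 0$.

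It remains to establish the middle equation of \eqref{eq:HEexplicit},
$$
d^{h*} F_h + i_{\theta_\omega^\sharp}F_h - *(F_\theta\wedge * d^c\omega) = 0.
$$
Here $\theta = \theta^h$ is the Chern connection of $h$, so that $F_\theta = F_h$, and the first line of \eqref{eq:BHE} is exactly the Hermitian-Einstein hypothesis of Lemma \ref{lemma:pHYM1}. The desired identity is therefore nothing but the conclusion of that lemma, with no additional argument required.

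In other words, at this stage the proposition is essentially bookkeeping: each line of \eqref{eq:BHE} matches one ingredient needed to apply Lemma \ref{lem:HE}. The genuinely substantive work—the curvature computation of the generalized metric in Proposition \ref{prop:secondRicci} and the Hermitian-geometric identity of Lemma \ref{lemma:pHYM1}—has already been carried out. Consequently, the only point that demands care is the dictionary itself: confirming the identification $S_h = z$ and the equality $F_\theta = F_h$, which are what align the abstract curvature of $\mathbf{G}$ appearing in \eqref{eq:HEexplicit} with the Chern curvature of $h$. Everything is pointwise, so no global analysis or estimates enter, and I expect no real obstacle beyond this matching.
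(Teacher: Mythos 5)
Your proposal is correct and follows exactly the paper's route: the paper's proof is the one-line statement that the result follows by ``combining Definition \ref{def:BHEsystem} with Lemma \ref{lemma:pHYM1} and Lemma \ref{lem:HE},'' and your argument simply spells out that combination, correctly identifying the third line of \eqref{eq:BHE} with the Bianchi hypothesis of Lemma \ref{lem:HE}, deducing $S_h = z$ (hence $[S_h,\cdot]=0$ and the third condition of \eqref{eq:HEexplicit}) from the first two lines, and obtaining the middle condition from Lemma \ref{lemma:pHYM1}.
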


\begin{proof}
The proof is straightforward, combining Definition \ref{def:BHEsystem} with Lemma \ref{lemma:pHYM1} and Lemma \ref{lem:HE}.
\end{proof}

To finish this section we provide a Riemannian characterization of the coupled Hermitian-Einstein system. In particular, we will see that the solutions of \eqref{eq:BHE} correspond to a natural class of generalized Ricci flat metrics on string algebroids and exhibit an interesting relation to heterotic supergravity, giving further motivation for their study (see Remark \ref{rem:GRF} and Remark \ref{rem:heterotic}).

\begin{proposition}\label{prop:BHE-GRF}
Let $X$ be a complex manifold endowed with a holomorphic principal $G$-bundle $P$. Assume that $(g,h)$ solves the coupled Hermitian-Einstein system \eqref{eq:BHE}. Then, $(g,h)$ solves the equations
\begin{equation}\label{eq:GRF}
\begin{split}
\operatorname{Rc} - \frac{1}{4}H^2 + \IP{i_{e_i}F_\theta , i_{e_i}F_\theta} + \tfrac{1}{2}L_{\varphi^\sharp} g & = 0, \\
d^{*} H - d \varphi +  i_{\varphi^\sharp}H & =0,\\
d^{\theta *} F_\theta +  i_{\varphi^\sharp}F_\theta + *(F_\theta\wedge * H) & =0,
\end{split}
\end{equation}
where $\operatorname{Rc}$ is the Riemannian Ricci tensor and
\begin{equation}\label{eq:Hvarphi}
H = -d^c \omega, \qquad \theta = \theta^h, \qquad \varphi = \theta_\omega.
\end{equation}
\end{proposition}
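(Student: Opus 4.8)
The plan is to verify the three lines of \eqref{eq:GRF} one at a time, using throughout the identifications \eqref{eq:Hvarphi}, under which $\theta=\theta^h$ and hence $F_\theta=F_h$, while $*\,d^c\omega=-*H$. The gauge equation (third line) comes essentially for free: substituting $H=-d^c\omega$ and $\varphi=\theta_\omega$ into its left-hand side produces exactly $d^{h*}F_h+i_{\theta_\omega^\sharp}F_h-*(F_\theta\wedge *d^c\omega)$, which vanishes by Lemma \ref{lemma:pHYM1}, since the first line of \eqref{eq:BHE} asserts that $h$ is Hermitian--Einstein. Thus this equation uses only the Hermitian--Einstein condition on $h$.

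Next I would treat the skew-symmetric equation $d^*H-d\varphi+i_{\varphi^\sharp}H=0$ (second line). Since it carries no gauge term, I expect it to hold as a pointwise Hermitian identity once $\varphi$ is the Lee form $\theta_\omega$ and $H=-d^c\omega$. The plan is to prove it by expanding $d^*H=-d^*d^c\omega$, together with $d\theta_\omega$ and $i_{\theta_\omega^\sharp}H$, in a $g$-orthonormal frame adapted to $J$, using the standard expressions for $\nabla J$ and $\theta_\omega$ recorded in the proof of Proposition \ref{prop:secondRicci}. It is important to observe that the Bianchi identity $dd^c\omega+\IP{F_h\wedge F_h}=0$ does \emph{not} make $g$ pluriclosed here, so this step must hold independently of the gauge data; it is the Hermitian incarnation of the fact that the Lee form is the correct ``dilaton'' killing the skew part of the generalized Ricci tensor.

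The heart of the argument, and the step I expect to be the main obstacle, is the symmetric Einstein equation (first line). The difficulty is that this is an equation for a symmetric $2$-tensor, whereas the only curvature input from \eqref{eq:BHE} is the $2$-\emph{form} condition $\rho_B+\IP{z,F_h}=0$ on the Bismut Ricci form. The plan is to establish the Hermitian ``miracle'' that the symmetric tensor
\[
T:=\operatorname{Rc}-\tfrac14 H^2+\IP{i_{e_i}F_\theta, i_{e_i}F_\theta}+\tfrac12 L_{\varphi^\sharp}g
\]
is $J$-invariant and that its companion form $T(J\cdot,\cdot)$ equals $\rho_B+\IP{z,F_h}$ up to sign. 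Concretely I would: (i) pass from the Levi-Civita Ricci curvature to the Bismut Ricci form using \eqref{eq:BismutHulleq} together with the standard decomposition of the Ricci tensor of the skew-torsion connection $\nabla^B=\nabla+\tfrac12 g^{-1}H$ into its symmetric part $\operatorname{Rc}-\tfrac14 H^2$ and a skew part proportional to $d^*H$; (ii) substitute the Bianchi identity $dd^c\omega=-\IP{F_h\wedge F_h}$ to trade the $dd^c\omega$ contribution for the matter term $\IP{i_{e_i}F_\theta, i_{e_i}F_\theta}$, which is $J$-invariant because $F_h$ is of type $(1,1)$; (iii) absorb the non-K\"ahler defect into $\tfrac12 L_{\varphi^\sharp}g$, which should precisely account for the difference between the two inequivalent trace contractions of the Bismut curvature and restore $J$-invariance of $T$; and (iv) read off $T(J\cdot,\cdot)=-(\rho_B+\IP{z,F_h})$, where the central element $z=\tfrac12\sum_i F_h(e_i,Je_i)$ enters via the Hermitian--Einstein condition. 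Vanishing of $T$, hence of the first line of \eqref{eq:GRF}, then follows from the second line of \eqref{eq:BHE}.

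The main obstacle is step (iii): checking that the Lee-form Lie-derivative term, the Levi-Civita Ricci tensor, and the torsion square $H^2$ conspire into a $J$-invariant tensor whose associated form is exactly the Bismut Ricci form plus the gauge contribution. This is the torsionful Hermitian analogue of the elementary K\"ahler identity $\operatorname{Rc}=\rho(J\cdot,\cdot)$, and it will require careful bookkeeping of the two distinct Ricci contractions of $\nabla^B$ and of the symmetric/skew splitting; alternatively, one could organize the same computation through the curvature blocks of the connection $D$ in Lemma \ref{lemma:connectioncurvature} and Proposition \ref{prop:secondRicci}, which already package the relevant contractions.
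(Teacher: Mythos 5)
Your handling of the first and third equations is essentially the paper's argument: the gauge equation is exactly the conclusion of Lemma \ref{lemma:pHYM1} applied to the Hermitian--Einstein reduction $h$, and your steps (i)--(iv) for the symmetric equation reproduce the paper's computation of $\rho_B^{1,1}(\cdot,J\cdot)$ via the Ivanov--Papadopoulos formulae, the identity \eqref{eq:BismutHulleq}, the Bianchi identity, and the relation $\Lambda_\omega F_h = z$. The genuine gap is in your treatment of the second line. The equation $d^*H - d\theta_\omega + i_{\theta_\omega^\sharp}H = 0$ is \emph{not} a pointwise identity of Hermitian geometry. What is true pointwise, for any Hermitian metric, is the identity
\begin{equation*}
\rho_B^{2,0+0,2}(\cdot,J\cdot) \;=\; -\tfrac{1}{2}\(d^*H - d\theta_\omega + i_{\theta_\omega^\sharp}H\),
\end{equation*}
so the second line of \eqref{eq:GRF} is equivalent to the vanishing of the $(2,0)+(0,2)$ component of the Bismut Ricci form. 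That component is generically non-zero: $\rho_B$ differs from the (always $(1,1)$) Chern Ricci form by an exact term built from the Lee form, whose $(0,2)$ part $\dbar\theta_\omega^{0,1}$ does not vanish for a general Hermitian metric --- this is precisely the component driving the torsion potential in pluriclosed flow. What kills it here is the \emph{second} equation of \eqref{eq:BHE}: $\rho_B = -\IP{z,F_h}$, and the right-hand side is of type $(1,1)$ because $F_h$ is a Chern curvature. So the Einstein equation of the coupled system is used twice, once on each type component of $\rho_B$, and your heuristic that the skew equation ``must hold independently of the gauge data'' would send you off to verify a false identity.

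The fix is exactly the paper's organization: prove the two pointwise identities \eqref{eq:rhoBdecomp} expressing $\rho_B^{1,1}(\cdot,J\cdot)$ and $\rho_B^{2,0+0,2}(\cdot,J\cdot)$ in terms of $\operatorname{Rc}$, $H$, $F_\theta$, $\theta_\omega$ (the first of which is your step (iii), carried out with the formulae for $\operatorname{Rc}$ and $\rho_B$ in terms of $\operatorname{Rc}_B$ from \cite{IvPa}), and then feed $\rho_B + \IP{z,F_h} = 0$ into both components simultaneously.
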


\begin{proof}
We have already seen that the Hermitian-Einstein condition implies the last equation in \eqref{eq:GRF}. Therefore, it is enough to prove that \eqref{eq:BHE} implies that
\begin{equation}\label{eq:rhoBdecomp}
\begin{split}
    \rho_B^{1,1}(\cdot,J\cdot) & = \operatorname{Rc} - \frac{1}{4}H^2 + \IP{i_{e_i}F_\theta , i_{e_i}F_\theta}+\IP{z,F_\theta(J,)} + \tfrac{1}{2}L_{\varphi^\sharp} g,\\
    \rho_B^{2,0+0,2}(\cdot,J\cdot) & = -\frac{1}{2}( d^{*} H - d \theta_\omega +  i_{\varphi^\sharp}H).
\end{split}
\end{equation}
To check this we will use the following general formulae, valid on any Hermitian manifold (see \cite[Proposition 3.1]{IvPa}):
\begin{align*}
    \operatorname{Rc}(V,W) & =  \operatorname{Rc}_B(V,W)-\tfrac{1}{2}d^*d^c\omega(V,W)+\tfrac{1}{4}g(d^c\omega(V,e_i),d^c\omega(W,e_i)),\\
    \rho_B(V,W) & = -\operatorname{Rc}_B(V,JW)-(\nabla^B_V \theta_\omega)JW+\tfrac{1}{4}dd^c\omega(V,W,e_i,Je_i),
\end{align*}
where $\operatorname{Rc}_B$ denotes the Ricci tensor of $\nabla^B$. To prove the first identity in \eqref{eq:rhoBdecomp}, we now calculate
\begin{align*}
    \rho_B^{1,1}(V,JW) &  = \tfrac{1}{2}\left(\rho_B(V,JW)-\rho_B(JV,W)\right)\\
    & = \tfrac{1}{2}(\operatorname{Rc}_B(V,W) + \operatorname{Rc}_B(JV,JW) + (\nabla^B_V \theta_\omega)W+(\nabla^B_{JV}\theta_\omega)JW\\
    &+\tfrac{1}{4}dd^c\omega(V,JW,e_i,Je_i)-\tfrac{1}{4}dd^c\omega(JV,W,e_i,Je_i) )\\
    & = \tfrac{1}{2}(2 \operatorname{Rc}(V,W)-\tfrac{1}{2}g(d^c\omega(V,e_i,),d^c\omega(W,e_i,))\\
    & - \tfrac{1}{2}\langle F_\theta\wedge F_\theta\rangle(V,JW,e_i,Je_i)+L_{\theta_\omega^\sharp}g(V,W))\\
    & = \Big{(}\operatorname{Rc} - \tfrac{1}{4}H\circ H +\langle i_{e_i}F_\theta, i_{e_i}F_\theta\rangle+\langle z,F_{\theta}(J\cdot,\cdot)\rangle +\tfrac{1}{2}L_{\theta_\omega^\sharp}g\Big{)}(V,W)
\end{align*}
where we denote
$$
H\circ H=\sum_{i,j} H(e_i,e_j,\cdot)H(e_i,e_j,\cdot)
$$
and for the third equality we have used the identity (see \cite[Equation (3.23)]{IvPa})
$$
\operatorname{Rc}_B(W,JV) = -\operatorname{Rc}_B(V,JW) -(\nabla^B_V \theta_\omega)JW-(\nabla^B_W \theta_\omega)JV.
$$
Similarly, the second identity in \eqref{eq:rhoBdecomp} follows from
\begin{align*}
    \rho_{B}^{2,0+0,2}(V,JW)& =\tfrac{1}{2}(\rho_B(V,JW)+\rho_B(JV,W))\\
    & = \tfrac{1}{2}(\operatorname{Rc}_B(V,W)+(\nabla^B_V \theta_\omega)W+\tfrac{1}{4}dd^c\omega(V,JW,e_i,Je_i)\\
    & -\operatorname{Rc}_B(JV,JW)-(\nabla^B_{JV} \theta_\omega)JW+\tfrac{1}{4}dd^c\omega(JV,W,e_i,Je_i))\\
    &=\tfrac{1}{2}(\operatorname{Rc}_B(V,W) - \operatorname{Rc}_B(W,V)+(\nabla^B_V \theta_\omega)W-(\nabla^B_{W}\theta_\omega)V)\\
    & = \tfrac{1}{2}(d^*d^c\omega(V,W)+d\theta_\omega(V,W)+\theta_\omega(g^{-1}d^c\omega(V,W)))\\
    & =- \tfrac{1}{2}(d^*H-d\theta_\omega+i_{\theta_\omega^\sharp}H)(V,W).
\end{align*}
\end{proof}

\begin{remark}\label{rem:GRF}
Consider the smooth string algebroid in Example \ref{def:E0}. Applying \cite[Lemma 7.1]{Ga1}, equations \eqref{eq:GRF} correspond to the vanishing of the \emph{generalized Ricci tensor} $\operatorname{Ric}^+$ of the generalized metric $V_+$ (see \eqref{eq:Vpm}), for a suitable choice of divergence operator determined by $\varphi$. Thus, by the previous result, any solution to the coupled Hermitian-Einstein system \eqref{eq:BHE} is generalized Ricci flat. Note that \eqref{eq:GRF} corrects some mistakes in the formula for the generalized Ricci tensor in \cite{Ga0,Ga1}.

\end{remark}

\begin{remark}\label{rem:heterotic}
When $\varphi = df$ for a smooth function $f$, the equations \eqref{eq:GRF} match the so called equations of motion of the heterotic supergravity in the mathematical physics literature (see e.g. \cite{FeIvUgVi}).
\end{remark}

\vspace{3mm}

\subsection{Relation to the Hull-Strominger system}\label{sec:HS-HE}

We study next the relation between the coupled Hermitian-Einstein system \eqref{eq:BHE} and the Hull-Strominger system \eqref{eq:HSintro}. Our construction requires the ansatz \eqref{eq:HYMintro} for the connection $\nabla$, and hence in our discussion we will implicitly assume this condition. We will embrace an abstract definition of the Hull-Strominger system, as considered in \cite[Definition 2.4]{GaRuShTi}, which is valid in arbitrary dimensions.

Let $X$ be a compact complex manifold of dimension $n$ endowed with a holomorphic volume form $\Omega$. Let $V_0$ and $V_1$ denote holomorphic vector bundles over $X$ satisfying
\begin{equation}\label{eq:c1c2abstract}
 ch_2(V_0) = ch_2(V_1) \in H^{2,2}_{BC}(X,\RR).
\end{equation}

\begin{definition}\label{def:HSabstract}
We say that a triple $(g,h_0,h_1)$, where $g$ is a Hermitian metric on $X$ and $h_j$ is a Hermitian metric on $V_j$, $j = 0,1$, satisfies the \emph{Hull-Strominger system} with coupling constant $\alpha \in \RR$ if
\begin{equation}\label{eq:HSabstract}
\begin{split}
F_{h_0}\wedge \omega^{n-1} & =0,\\
F_{h_1}\wedge \omega^{n-1} & =0,\\
d(\|\Omega\|_\omega \omega^{n-1}) & = 0,\\
dd^c \omega - \alpha \tr F_{h_0} \wedge F_{h_0} + \alpha \tr F_{h_1} \wedge F_{h_1} & = 0.
\end{split}
\end{equation}
\end{definition}

\begin{remark}\label{rem:HSabstract}
Take $n=3$, $V = V_1$ and assume that $V_0$ is isomorphic to $T^{1,0}$ as a smooth complex vector bundle. Then, any solution of \eqref{eq:HSintro} with the Hermitian-Yang-Mills ansatz \eqref{eq:HYMintro} determines a solution to \eqref{eq:HSabstract} with $V_0 = (T^{1,0},\nabla^{0,1})$ and $h_0 = g$. Conversely, any solution to \eqref{eq:HSabstract} gives a solution of \eqref{eq:HSintro} with $\nabla$ satisfying \eqref{eq:HYMintro}, defined by pulling-back the Chern connection $D^{h_0}$ via a complex gauge transformation on $T^{1,0}$ taking $g$ to $h_0$ (in order to obtain a connection which is $g$-compatible). Observe that the equations \eqref{eq:HSintro} are invariant under this change.
\end{remark}

In the next result we establish the relation with the coupled Hermitian-Einstein system.

\begin{proposition}\label{prop:HS-BHE}
Assume that $(X,\Omega,V_0,V_1)$ admits a solution $(g,h_0,h_1)$ of the Hull-Strominger system \eqref{eq:HSabstract} with coupling constant $\alpha$. Denote by $P$ the holomorphic principal $G$-bundle of split frames of $V_0 \oplus V_1$, $h = h_0 \oplus h_1$, and consider the pairing $\IP{,}: \mathfrak{g} \otimes \mathfrak{g} \to \mathbb{C}$ induced by 
\begin{equation}\label{eq:pairingHS}
\IP{,} : = - \alpha \tr_{V_0} + \alpha \tr_{V_1}.
\end{equation}
Then, $(g,h)$ solves the coupled Hermitian-Einstein system \eqref{eq:BHE} with $z = 0$.
\end{proposition}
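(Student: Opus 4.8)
The plan is to verify directly that $(g,h)$ satisfies each of the three equations of the coupled Hermitian-Einstein system \eqref{eq:BHE} with $z = 0$, the only substantive point being the middle equation. The structure group $G$ of the bundle $P$ of split frames of $V_0 \oplus V_1$ preserves the splitting, so the reduction $h = h_0 \oplus h_1$ has block-diagonal Chern connection and its curvature decomposes as $F_h = F_{h_0} \oplus F_{h_1}$, with no off-diagonal terms. Under the pairing \eqref{eq:pairingHS}, which restricts to $-\alpha \tr_{V_0}$ on $\mathfrak{gl}(V_0)$, to $+\alpha\tr_{V_1}$ on $\mathfrak{gl}(V_1)$, and vanishes on cross terms, every trace below factors through this block decomposition.

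First I would dispatch the two equations that match \eqref{eq:HSabstract} termwise. For the Hermitian-Einstein equation, block-diagonality gives $F_h \wedge \omega^{n-1} = (F_{h_0} \wedge \omega^{n-1}) \oplus (F_{h_1} \wedge \omega^{n-1})$, which vanishes by the first two lines of \eqref{eq:HSabstract}; in particular the first line of \eqref{eq:BHE} holds with $z = 0$. For the Bianchi identity, the same decomposition yields $\IP{F_h \wedge F_h} = -\alpha \tr F_{h_0} \wedge F_{h_0} + \alpha \tr F_{h_1} \wedge F_{h_1}$, so the third line of \eqref{eq:BHE} coincides exactly with the last line of \eqref{eq:HSabstract}.

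It remains to establish the middle equation of \eqref{eq:BHE}, which with $z = 0$ reduces to the vanishing of the Bismut-Ricci form, $\rho_B = 0$; this is the heart of the argument and the only place where the conformally balanced condition $d(\|\Omega\|_\omega \omega^{n-1}) = 0$ and the holomorphic volume form $\Omega$ enter. My approach would be to use that $\nabla^B$ is a unitary connection, hence induces a Hermitian connection on the canonical bundle $K_X$ whose curvature is a nonzero multiple of $\rho_B$, and to show this connection is flat. The conformally balanced condition pins down the Lee form as $\theta_\omega = -d\log\|\Omega\|_\omega$ (up to normalization), which forces the global unit-norm section $\|\Omega\|_\omega^{-1}\Omega$ of $K_X$ to be $\nabla^B$-parallel: one checks the $(0,1)$-part using $\dbar\Omega = 0$ together with the expression of the Bismut torsion correction on $K_X$ in terms of $\theta_\omega$, and the $(1,0)$-part follows by unitarity. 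Equivalently, this is the familiar statement that conformal balancedness forces the Bismut holonomy into $SU(n)$, so that the induced connection on $K_X$ is flat and $\rho_B = 0$. Identifying the precise Lee-form correction relating the Bismut- and Chern-induced connections on $K_X$, so that it cancels the Chern-Ricci term $\rho_C = \pm\, dd^c\log\|\Omega\|_\omega$, is the one computational point requiring care; the rest is a matter of matching terms between \eqref{eq:BHE} and \eqref{eq:HSabstract}.
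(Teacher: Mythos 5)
Your proposal is correct and follows essentially the same route as the paper: the first and third equations of \eqref{eq:BHE} are matched termwise against \eqref{eq:HSabstract} using the block-diagonal decomposition of $F_h$ and the pairing \eqref{eq:pairingHS}, and the middle equation reduces to $\rho_B=0$, which both you and the paper obtain from the fact that $\rho_B$ is (proportional to) the curvature of the connection induced by $\nabla^B$ on the anti-canonical bundle together with the conformally balanced condition forcing $\nabla^B(\|\Omega\|_\omega^{-1}\Omega)=0$. The paper simply cites \cite[Proposition 3.6]{GFReview} for this last parallelism, where you sketch the Lee-form computation directly; the content is the same.
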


\begin{proof}
By construction, it suffices to prove that the $g$ satisfies $\rho_B = 0$. This follows from the fact that $\rho_B$ is proportional to the curvature of the connection induced by $\nabla^B$ in the anti-canonical bundle (see \eqref{eq:secondRicci}) combined with the second equation in \eqref{eq:HSabstract}, which implies (see \cite[Proposition 3.6]{GFReview})
$$
\nabla^B(\|\Omega\|_\omega^{-1} \Omega) = 0.
$$
\end{proof}

The key upshot of the previous result is that any solution of the Hull-Strominger system \eqref{eq:HSabstract} determines uniquely a possibly indefinite Hermitian-Einstein generalized metric. The proof follows from Proposition \ref{prop:BHE-HE} and Proposition \ref{prop:HS-BHE}. 

\begin{proposition}\label{prop:HS-HE}
Assume that $(X,\Omega,V_0,V_1)$ admits a solution $(g,h_0,h_1)$ of the Hull-Strominger system \eqref{eq:HSabstract}. Consider the holomorphic vector bundle underlying the Bott-Chern algebroid $Q_{P,2i\partial\omega,\theta^h}$, where $P$ and $h$ are as in Proposition \ref{prop:HS-BHE}. Then, the (possibly) indefinite Hermitian metric $\mathbf{G}$ in Lemma \ref{t:Ggeneralized1} solves the Hermitian-Einstein equation \eqref{eq:HE}.
\end{proposition}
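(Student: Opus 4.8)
The plan is to deduce the statement as a direct composition of the two structural results established immediately above, namely Proposition \ref{prop:HS-BHE} and Proposition \ref{prop:BHE-HE}. First I would record that the hypotheses here are exactly the input of Proposition \ref{prop:HS-BHE}: starting from a solution $(g,h_0,h_1)$ of the Hull-Strominger system \eqref{eq:HSabstract}, I form the holomorphic principal $G$-bundle $P$ of split frames of $V_0 \oplus V_1$, set $h = h_0 \oplus h_1$, and equip $\mathfrak{g}$ with the pairing \eqref{eq:pairingHS}. Proposition \ref{prop:HS-BHE} then yields that $(g,h)$ solves the coupled Hermitian-Einstein system \eqref{eq:BHE} with $z = 0$.

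The second step is to feed this conclusion into Proposition \ref{prop:BHE-HE}: since $(g,h)$ solves \eqref{eq:BHE}, the (possibly) indefinite Hermitian metric $\mathbf{G}$ of Lemma \ref{t:Ggeneralized1} on the Bott-Chern algebroid $\mathcal{Q}_{P,2i\partial\omega,\theta^h}$ satisfies the Hermitian-Einstein equation \eqref{eq:HE}, which is the desired conclusion. Before invoking the two propositions I would verify that both build $\mathcal{Q}_{P,2i\partial\omega,\theta^h}$ from the \emph{same} data: the pairing \eqref{eq:pairingHS} is common to both, and the constraint \eqref{eq:BC22} needed to form the Bott-Chern algebroid is guaranteed by \eqref{eq:c1c2abstract}. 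One also checks the elementary matchings of equations: the two Hermitian-Einstein conditions $F_{h_0}\wedge\omega^{n-1} = 0$ and $F_{h_1}\wedge\omega^{n-1} = 0$ assemble into $F_h\wedge\omega^{n-1} = 0$, which is the first line of \eqref{eq:BHE} with $z = 0$, while the last line of \eqref{eq:HSabstract} is precisely $dd^c\omega + \IP{F_h \wedge F_h} = 0$ once the pairing \eqref{eq:pairingHS} is unwound.

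In this formulation there is no genuine obstacle left in the present proof: all the analytic content has already been discharged in the two prior propositions. The only substantive ingredient, internal to Proposition \ref{prop:HS-BHE}, is the vanishing $\rho_B = 0$ of the Bismut Ricci form, which supplies the second line of \eqref{eq:BHE} after setting $z = 0$; this is where the conformally balanced equation $d(\|\Omega\|_\omega\omega^{n-1}) = 0$ enters, through the identity $\nabla^B(\|\Omega\|_\omega^{-1}\Omega) = 0$ and the fact that $\rho_B$ is the curvature induced by $\nabla^B$ on the anti-canonical bundle. Accordingly, I would present the proof simply as the chain Proposition \ref{prop:HS-BHE} $\Rightarrow$ coupled Hermitian-Einstein $\Rightarrow$ Proposition \ref{prop:BHE-HE}.
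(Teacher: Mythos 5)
Your proposal is correct and coincides with the paper's own argument: the paper states that the proof follows from Proposition \ref{prop:BHE-HE} and Proposition \ref{prop:HS-BHE}, which is exactly the two-step chain you describe. The additional consistency checks you include (matching of the pairing, assembly of the Hermitian-Einstein conditions, and the role of the conformally balanced equation inside Proposition \ref{prop:HS-BHE}) are accurate but already discharged in those prior results.
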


\begin{remark}\label{rem:DelaOssa}
Lemma \ref{lem:HE} and Proposition \ref{prop:HS-HE} shall be compared with the original result by De la Ossa, Larfors, and Svanes in \cite[Corollary 1]{OssaLarforsSvanes}, who observed that the Hull-Strominger system is equivalent to \eqref{eq:HE} to all orders in perturbation theory.
\end{remark}

\section{Futaki invariants}\label{sec:Futaki}

\subsection{Aeppli classes and Futaki Invariants}\label{sec:Futakidef}

In this section we introduce a family of characters which obstructs the existence of solutions to the Hull-Strominger system \eqref{eq:HSabstract}. This stems from a general formalism that associates an invariant to any equation with a moment map interpretation (see e.g. \cite{AGG}), which we call Futaki invariant by analogy with the classical invariant obstructing the existence of K\"ahler-Einstein metrics on a K\"ahler manifold \cite{Futaki1}. 

The construction of our Futaki invariants follows from the application of this general picture to the Hermite-Einstein equation on a holomorphic vector bundle, with the novelty that we allow the solution $\mathbf{G}$ to have arbitrary signature. Since this is not completely standard in the literature, we give the details that apply in our setting. Let $X$ be a compact complex manifold endowed with a holomorphic vector bundle $\mathcal{Q}$. Recall that a choice of pseudo-Hermitian metric $\mathbf{G}$ on $\mathcal{Q}$ determines uniquely a compatible Chern connection $D^{\mathbf{G}}$ such that $(D^{\mathbf{G}})^{0,1}$ is the canonical Dolbeault operator on $\mathcal{Q}$.

\begin{lemma}\label{l:clasFut}
Let $X$ be a compact complex manifold, $\mathcal{Q}$ a holomorphic vector bundle over $X$, and $\mathfrak{b} \in H^{n-1,n-1}_{BC}(X,\RR)$ a Bott-Chern class. Then, the map
\begin{align}\label{eq:kahlerfutaki}
        \mathcal{F}_{\mathfrak{b}}:  H^{0}&(X,\mathrm{End}\; \mathcal{Q}) \longrightarrow \mathbb{C} \\
        & \varphi \mapsto \int_{X} \mathrm{tr}(\varphi F_{\mathbf{G}})\wedge \nu
\end{align}
defines a character of the Lie algebra $H^{0}(X,\mathrm{End}\; \mathcal{Q})$, which does not depend on the representative $\nu$ of $\mathfrak{b} = [\nu]$ and neither on the choice of a pseudo-Hermitian metric $\mathbf{G}$ on $\mathcal{Q}$. In particular, $\mathcal{F}_{\mathfrak{b}}=0$ if there exists a pseudo-Hermitian metric $\mathbf{G}$ on $\mathcal{Q}$ and a balanced Hermitian metric $\omega$ on $X$, with $\mathfrak{b} = [\omega^{n-1}]$, solving the Hermitian-Einstein equation
$$
F_{\mathbf{G}} \wedge \omega^{n-1} = 0.
$$
\end{lemma}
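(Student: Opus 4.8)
The plan is to reduce every assertion to two elementary facts: first, that any representative $\nu \in \Omega^{n-1,n-1}$ of a class in $H^{n-1,n-1}_{BC}(X,\RR)$ is $d$-closed, hence by type satisfies $\partial\nu = \dbar\nu = 0$; and second, that for a holomorphic section $\varphi \in H^0(X,\End\mathcal{Q})$ the scalar $(1,1)$-form $\tr(\varphi F_{\mathbf{G}})$ is $\dbar$-closed. For the second fact I would compute $d\,\tr(\varphi F_{\mathbf{G}}) = \tr\big((D^{\mathbf{G}}\varphi)\wedge F_{\mathbf{G}}\big)$, where the term $\tr(\varphi\, D^{\mathbf{G}} F_{\mathbf{G}})$ drops out by the Bianchi identity $D^{\mathbf{G}}F_{\mathbf{G}} = 0$; since $\varphi$ is holomorphic we have $D^{\mathbf{G}}\varphi = \partial^{\mathbf{G}}\varphi$ of type $(1,0)$, so $d\,\tr(\varphi F_{\mathbf{G}})$ is purely of type $(2,1)$, whence $\dbar\,\tr(\varphi F_{\mathbf{G}}) = 0$ and a fortiori $\partial\dbar\,\tr(\varphi F_{\mathbf{G}}) = 0$. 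I would emphasize at the outset that the whole argument uses only \emph{non-degeneracy} of $\mathbf{G}$: the Chern connection, its curvature of type $(1,1)$, and the Bianchi identity all hold verbatim for a pseudo-Hermitian metric, so the indefinite signature is harmless here.

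For independence of the representative, if $\nu' = \nu + dd^c\gamma$ with $\gamma\in\Omega^{n-2,n-2}$, then integrating by parts twice on the closed manifold $X$ gives $\int_X \tr(\varphi F_{\mathbf{G}})\wedge dd^c\gamma = \int_X \gamma\wedge dd^c\,\tr(\varphi F_{\mathbf{G}})$, which vanishes because $dd^c = 2i\partial\dbar$ annihilates the $\dbar$-closed form $\tr(\varphi F_{\mathbf{G}})$. For independence of $\mathbf{G}$ I would differentiate along a path $\mathbf{G}_t$ of pseudo-Hermitian metrics: writing $\eta = \mathbf{G}_t^{-1}\dot{\mathbf{G}}_t$, the standard variation formula gives $\dot F_{\mathbf{G}_t} = \dbar^{D}(\partial^{\mathbf{G}_t}\eta)$, so that $\tr(\varphi\,\dot F_{\mathbf{G}_t}) = \dbar\,\tr(\varphi\,\partial^{\mathbf{G}_t}\eta)$ using $\dbar\varphi = 0$; integrating by parts against $\nu$ and using $\dbar\nu = 0$ then shows $\tfrac{d}{dt}\mathcal{F}_{\mathfrak{b}} = 0$. (Conceptually this is just the statement that $[\tr(\varphi F_{\mathbf{G}})]$ is a well-defined Aeppli class paired with $\mathfrak{b}$ under Bott-Chern/Aeppli duality.)

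The character property is the same computation in disguise. Using cyclicity of the trace I would first rewrite $\tr([\varphi,\psi]F_{\mathbf{G}}) = \tr(\varphi[\psi,F_{\mathbf{G}}])$, and then use that, since $\psi$ is holomorphic and $F_{\mathbf{G}}$ is of type $(1,1)$, the curvature-commutator identity $[F_{\mathbf{G}},\psi] = \dbar^{D}(\partial^{\mathbf{G}}\psi)$ holds (this is $(D^{\mathbf{G}})^2\psi$, whose $(2,0)$-part is absent because $F_{\mathbf{G}}^{2,0}=0$). Hence $\tr([\varphi,\psi]F_{\mathbf{G}}) = -\dbar\,\tr(\varphi\,\partial^{\mathbf{G}}\psi)$, and $\mathcal{F}_{\mathfrak{b}}([\varphi,\psi]) = 0$ once more by integration by parts against the $\dbar$-closed $\nu$. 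Finally, the vanishing under the Hermitian-Einstein hypothesis is immediate given independence: taking $\nu = \omega^{n-1}$, a legitimate representative of $\mathfrak{b}$ since $\omega$ balanced means $d\omega^{n-1}=0$, together with the given $\mathbf{G}$, the integrand $\tr(\varphi F_{\mathbf{G}})\wedge\omega^{n-1} = \tr\big(\varphi\,(F_{\mathbf{G}}\wedge\omega^{n-1})\big)$ vanishes pointwise. I expect the only genuine care required to be the bookkeeping of signs in the repeated integration by parts and checking that the variation and curvature-commutator formulas survive for indefinite $\mathbf{G}$; there is no real obstacle, as everything rests on the single identity $\dbar\,\tr(\varphi F_{\mathbf{G}}) = 0$ combined with $\dbar\nu = 0$.
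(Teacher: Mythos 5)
Most of your argument coincides with the paper's proof. The whole lemma does rest on the single identity $\dbar\,\tr(\varphi F_{\mathbf{G}})=0$ (equivalently, that $d\,\tr(\varphi F_{\mathbf{G}})=\tr(\partial^{\mathbf{G}}\varphi\wedge F_{\mathbf{G}})$ is of pure type $(2,1)$ by holomorphicity of $\varphi$ and the Bianchi identity), combined with Stokes against the $d$-closed pure-type form $\nu$. Your treatment of the representative-independence, of the character property via $[F_{\mathbf{G}},\psi]=\dbar\partial^{\mathbf{G}}\psi$, and of the final pointwise vanishing for a balanced Hermitian-Einstein pair is exactly what the paper does, and you are right that only non-degeneracy of $\mathbf{G}$ is used, so the indefinite signature causes no trouble in those steps.

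The one step with a genuine gap is the independence of $\mathbf{G}$. You propose to differentiate along a path $\mathbf{G}_t$ of pseudo-Hermitian metrics joining $\mathbf{G}$ to $\mathbf{G}'$, but such a path need not exist: the signature of a pseudo-Hermitian metric is locally constant, so metrics of different signatures lie in different path components; and even for a fixed signature $(p,q)$ the space of such metrics is the space of sections of a fibre bundle with fibre $GL(r,\CC)/U(p,q)$, which is homotopy equivalent to a Grassmannian rather than contractible, so the section space can be disconnected (already for the trivial rank-two bundle over $\PP^1$ with signature $(1,1)$ the fibre is homotopy equivalent to $S^2$ and $\pi_0$ of the space of metrics is $\ZZ$). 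Your variational computation therefore only proves that $\mathcal{F}_{\mathfrak{b}}$ is locally constant in $\mathbf{G}$, which is strictly weaker than the well-definedness claimed in the lemma. The paper sidesteps this entirely: any two non-degenerate Hermitian forms are related by $\mathbf{G}'(\cdot,\cdot)=\mathbf{G}(g\cdot,\cdot)$ for a single global smooth complex gauge transformation $g$, whence the exact (non-infinitesimal) identity $F_{\mathbf{G}'}=F_{\mathbf{G}}+\dbar(g^{-1}\partial^{\mathbf{G}}g)$, after which the same integration by parts you already perform finishes the argument. Replacing your path argument by this finite gauge-transformation argument repairs the proof; the rest stands as written.
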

\begin{proof}
Let $\tilde{\nu}, \nu \in \Omega^{n-1,n-1}$ be $d$-closed forms on $X$, such that
$$
\tilde{\nu}-\nu = \overline{\partial}\partial \alpha
$$
for some $\alpha \in \Omega^{n-2,n-2}$. Then, by type decomposition and the Bianchi identity for $D^\mathbf{G}$, we have
\begin{align*}
\int_X \mathrm{tr}(\varphi F_{\mathbf{G}})\wedge \overline{\partial}\partial \alpha & = \int_{X}d(\mathrm{tr}(\varphi F_{\mathbf{G}})\wedge\partial \alpha) - \int_X \mathrm{tr}(\overline{\partial}\varphi F_{\mathbf{G}})\wedge \partial \alpha = 0,
\end{align*}
where the two summands vanish independently by hypothesis. Now, let $\mathbf{G}$ and $\mathbf{G}'$ be arbitrary pseudo-Hermitian metrics on $\mathcal{Q}$. Since $\mathbf{G}$ and $\mathbf{G}'$ are both non-degenerate, there exists a smooth complex gauge transformation $g$ on $\mathcal{Q}$ such that $\mathbf{G}'(\cdot,\cdot)= \mathbf{G}(g\cdot,\cdot)$. Then, their Chern curvatures are related by
$$
F_{\mathbf{G}'}=F_\mathbf{G}+\overline{\partial}(g^{-1}\partial^\mathbf{G}g)
$$
and it follows that, again by type decomposition and the holomorphicity of $\varphi$,
\begin{align*}
\int_{X} \mathrm{tr}(\varphi(F_{\mathbf{G}'}-F_\mathbf{G}))\wedge \nu & = \int_{X} \mathrm{tr}(\varphi\overline{\partial}(g^{-1}\partial^\mathbf{G}g))\wedge \nu\\
 & = \int_{X} d(\mathrm{tr}(\varphi (g^{-1}\partial^\mathbf{G}g))\wedge \nu)  -\int_X \mathrm{tr}(\overline{\partial}\varphi\wedge (g^{-1}\partial^\mathbf{G}g))\wedge \nu\\
 &  +\int_{X} \mathrm{tr}(\varphi (g^{-1}\partial^\mathbf{G}g))\wedge \overline{\partial}\nu\\
 & = 0.
\end{align*}
Finally, for $\varphi, \varphi' \in H^{0}(X,\mathrm{End}\; \mathcal{Q})$, using that $[F_\mathbf{G},\varphi'] = \dbar \partial^\mathbf{G}\varphi'$, one has
\begin{align*}
\int_{X} \mathrm{tr}([\varphi,\varphi'] F_\mathbf{G})\wedge \nu = 
- \int_{X} \mathrm{tr}(\varphi\dbar \partial^\mathbf{G}\varphi')\wedge \nu = - \int_{X} d (\mathrm{tr}(\varphi \partial^\mathbf{G}\varphi'))\wedge \nu = 0.
\end{align*}
\end{proof}

Using the duality isomorphism $H^{n-1,n-1}_{BC}(X)^* \cong H^{1,1}_A(X)$, the \emph{Futaki invariants} in Lemma \ref{l:clasFut}, with $\mathfrak{b}$ varying along $H^{n-1,n-1}_{BC}(X)$, can be written more elegantly as a $H^{1,1}_A(X)$-valued character
$$
\mathcal{F} : H^{0}(X,\mathrm{End} \; \mathcal{Q}) \rightarrow H^{1,1}_A(X) \colon \varphi \mapsto [\tr(\varphi F_{\mathbf{G}})].
$$
In order to apply Lemma \ref{l:clasFut} to the Hull-Strominger system \eqref{eq:HSabstract}, we assume that the compact complex manifold $X$ is endowed with a holomorphic volume form $\Omega$. Let $V_0$ and $V_1$ be holomorphic vector bundles over $X$ satisfying \eqref{eq:c1c2abstract}. Denote by $P$ the holomorphic principal bundle of split frames of $V_0 \oplus V_1$. Fix $\alpha \in \mathbb{R}$ and consider the associated pairing $\IP{,}: \mathfrak{g} \otimes \mathfrak{g} \to \mathbb{C}$ on the Lie algebra of the structure group, defined as in \eqref{eq:pairingHS}. Then, we have
\begin{equation}\label{eq:BC22bis}
p_1(P) = 0 \in H^{2,2}_{BC}(X,\RR),
\end{equation}
and, by Proposition \ref{prop:BCclassification}, the set of equivalence classes of Bott-Chern algebroids over $X$ with principal bundle $P$ and bundle of quadratic Lie algebras $(\ad P,\IP{,})$ is a non-empty affine space $\mathfrak{S}^\alpha$ modelled on the image of
\begin{equation}\label{eq:partialmapbis}
\partial \colon H^{1,1}_A(X,\mathbb{R}) \to H^1(\Omega^{2,0}_{cl}).
\end{equation}
Consider the family of finite-dimensional complex Lie algebras
$$
\mathfrak{H}^\alpha \to \mathfrak{S}^\alpha,
$$
where the fibre over $\mathfrak{s} \in \mathfrak{S}^\alpha$ is given by the Lie algebra of the group of holomorphic gauge transformations of the vector bundle $\mathcal{Q}_{\mathfrak{s}}$
$$
\mathfrak{H}_\mathfrak{s}^\alpha := H^0(X,\End \mathcal{Q}_\mathfrak{s}).
$$
Then, by application of Lemma \ref{l:clasFut}, there is a family of $H^{1,1}_A(X)$-valued characters
\begin{equation}
    \xymatrix{
 \mathcal{F}^\alpha \colon \mathfrak{H}^\alpha \ar[r] & H^{1,1}_A(X).
 }
\end{equation}

\begin{theorem}\label{t:Futaki}
Assume that $(X,\Omega,V_0,V_1)$ admits a solution $(g,h_0,h_1)$ of the Hull-Strominger system \eqref{eq:HSabstract} with coupling constant $\alpha \in \mathbb{R}$ and balanced class 
$$
\mathfrak{b} = [\|\Omega\|_\omega \omega^{n-1}] \in H^{n-1,n-1}_{BC}(X,\RR).
$$
Then, there exists $\mathfrak{s} \in\mathfrak{S}^\alpha$ such that $\IP{\mathcal{F}^\alpha_\mathfrak{s},\mathfrak{b}} = 0$.
\end{theorem}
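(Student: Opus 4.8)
The plan is to recognise the statement as a direct corollary of Proposition \ref{prop:HS-HE} together with the vanishing criterion of Lemma \ref{l:clasFut}, the only genuine subtlety being the conformal factor $\|\Omega\|_\omega$ that relates the conformally balanced form to an honest balanced class.

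First I would feed the solution $(g,h_0,h_1)$ into Proposition \ref{prop:HS-HE}. Taking $P$ to be the holomorphic principal bundle of split frames of $V_0\oplus V_1$, $h = h_0\oplus h_1$, and endowing $\mathfrak{g}$ with the pairing \eqref{eq:pairingHS}, the last equation of \eqref{eq:HSabstract} is exactly the Bianchi identity $dd^c\omega + \IP{F_h\wedge F_h}=0$ for this choice of pairing. Hence the construction of Section \ref{sec:BC} produces the Bott-Chern algebroid $\mathcal{Q}_{P,2i\partial\omega,\theta^h}$, carrying the possibly indefinite generalized Hermitian metric $\mathbf{G}$ of Lemma \ref{t:Ggeneralized1}, and Proposition \ref{prop:HS-HE} asserts that $\mathbf{G}$ is Hermitian-Einstein, $F_\mathbf{G}\wedge\omega^{n-1}=0$. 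By Proposition \ref{prop:BCclassification} the equivalence class of this algebroid, under the diagrams \eqref{eq:defstringiso}, is a well-defined point $\mathfrak{s}\in\mathfrak{S}^\alpha$, and the associated Lie algebra $\mathfrak{H}_\mathfrak{s}^\alpha = H^0(X,\End\mathcal{Q}_\mathfrak{s})$ is the domain of the character $\IP{\mathcal{F}_\mathfrak{s}^\alpha,\mathfrak{b}}$, which under the duality $H^{n-1,n-1}_{BC}(X)^*\cong H^{1,1}_A(X)$ is precisely the Futaki character $\mathcal{F}_\mathfrak{b}$ of Lemma \ref{l:clasFut} for the bundle $\mathcal{Q}_\mathfrak{s}$.

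It then remains to evaluate this character. The conformally balanced equation in \eqref{eq:HSabstract} guarantees that $\nu := \|\Omega\|_\omega\omega^{n-1}$ is $d$-closed, so it is a legitimate representative of $\mathfrak{b}$. For any $\varphi\in\mathfrak{H}_\mathfrak{s}^\alpha$ I would then compute, using that $\varphi$ has form-degree zero and $\omega^{n-1}$ is scalar-valued,
$$
\IP{\mathcal{F}_\mathfrak{s}^\alpha,\mathfrak{b}}(\varphi) = \int_X \tr(\varphi F_\mathbf{G})\wedge\nu = \int_X \|\Omega\|_\omega\,\tr\big(\varphi\,(F_\mathbf{G}\wedge\omega^{n-1})\big) = 0,
$$
the last equality being the Hermitian-Einstein equation from the first step. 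Equivalently, one may pass to the genuinely balanced metric $\tilde\omega := \|\Omega\|_\omega^{1/(n-1)}\omega$, for which $\tilde\omega^{n-1} = \|\Omega\|_\omega\omega^{n-1}$ represents $\mathfrak{b}$ and $F_\mathbf{G}\wedge\tilde\omega^{n-1} = \|\Omega\|_\omega(F_\mathbf{G}\wedge\omega^{n-1}) = 0$, so that $(\mathbf{G},\tilde\omega)$ meets the hypotheses of the vanishing clause of Lemma \ref{l:clasFut} verbatim.

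I expect no serious obstacle: the result is formal once the Hermitian-Einstein metric on the algebroid is in hand. The only points requiring care are, first, checking that $\mathbf{G}$ --- which is built from $g$ rather than from $\tilde\omega$ --- still solves the Hermitian-Einstein equation against $\tilde\omega^{n-1}$, which is immediate because the rescaling merely multiplies the $(n,n)$-form $F_\mathbf{G}\wedge\omega^{n-1}$ by a positive function; and second, that the vanishing criterion of Lemma \ref{l:clasFut} genuinely applies in the indefinite-signature setting of $\mathbf{G}$, which is already built into that lemma. Since the theorem only asserts the existence of some $\mathfrak{s}$, the one produced by the given solution suffices and no independence-of-choices argument is needed.
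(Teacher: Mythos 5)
Your proof is correct and follows essentially the same route as the paper's: associate to the solution the Bott--Chern algebroid $\mathcal{Q}_{P,2i\partial\omega,\theta^h}$, take $\mathfrak{s}$ to be its class in $\mathfrak{S}^\alpha$, invoke Proposition \ref{prop:HS-HE} to obtain the Hermitian--Einstein metric $\mathbf{G}$, and conclude by Lemma \ref{l:clasFut}. Your explicit treatment of the conformal factor $\|\Omega\|_\omega$ --- observing that $\nu=\|\Omega\|_\omega\omega^{n-1}$ is the relevant closed representative and that $F_{\mathbf{G}}\wedge\omega^{n-1}=0$ forces the integrand to vanish pointwise --- is a detail the paper's one-line proof leaves implicit, and you handle it correctly.
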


\begin{proof}
Consider the Bott-Chern algebroid $Q_{P,2i\partial\omega,\theta^h}$ associated to the solution $(g,h_0,h_1)$, defined as in Example \ref{def:Q0}, where $h = h_0 \oplus h_1$. Denote by $\mathfrak{s} = [Q_{P,2i\partial\omega,\theta^h}] \in\mathfrak{S}^\alpha$ its isomorphism class. Then, by Proposition \ref{prop:HS-HE}, the (possibly) indefinite Hermitian metric $\mathbf{G}$ in Lemma \ref{t:Ggeneralized1} solves the Hermitian-Einstein equation \eqref{eq:HE}, and hence $\IP{\mathcal{F}^\alpha_\mathfrak{s},\mathfrak{b}} = 0$ by application of Lemma \ref{l:clasFut}.
\end{proof}

\begin{remark}
Following \cite{GaRuTi3}, we expect that the family of Lie algebras $\mathfrak{H}^\alpha$ depends holomorphically on parameters, upon restriction to any locus $\mathfrak{S}^\alpha_\sigma \subset \mathfrak{S}^\alpha$ with fixed \emph{real string class} $\sigma$.
\end{remark}


As a direct application of Theorem \ref{t:Futaki}, we obtain Theorem \ref{t:Futakiintro} as stated in Section \ref{sec:intro}. In the case that the holomorphic tangent bundle $T^{1,0}$ (with the standard holomorphic structure) is polystable with respect to some balanced class $\mathfrak{b} \in H^{2,2}_{BC}(X,\mathbb{R})$, we expect that Theorem \ref{t:Futaki} provides also an obstruction to the existence of solutions to \eqref{eq:HSintro} with $\nabla = D^g$. 

As a consequence of Theorem \ref{t:Futaki}, in order to disprove the strong version of Yau's Conjecture in Question \ref{question} for the case of Calabi-Yau threefolds, it suffices to find a tuple $(X,\Omega,V)$, $\alpha \in \RR$ and a balanced class $\mathfrak{b} \in H^{2,2}_{BC}(X,\RR)$, such that $V$ is $\mathfrak{b}$-polystable and 
\begin{equation*}
\IP{\mathcal{F}_\mathfrak{s}^\alpha,\mathfrak{b}} \neq 0, \qquad \forall \mathfrak{s} \in \mathfrak{U}^0,
\end{equation*}
where $\mathfrak{U}^0$ denotes the restriction of the relative family of string algebroid extensions over a dense open subset of the moduli space for $V_0$.

When the Calabi-Yau manifold $X$ satisfies the $\partial\dbar$-Lemma the space $\mathfrak{S}^\alpha$ reduces to a point (see Proposition \ref{prop:BCclassification}). In this case we obtain a unique invariant $\mathcal{F}_0^\alpha$ obstructing the existence of solutions, which can be regarded as a \emph{stringy version} of the classical Futaki invariant for the holomorphic bundle $P$. Based on this, we expect that $\mathcal{F}_0^\alpha$ provides a useful tool to address Question \ref{question} in the case of Calabi-Yau manifolds obtained via conifold transitions and flops.

\begin{remark}\label{rem:BHEFutaki}
As a consequence of Proposition \ref{prop:BHE-HE} and Lemma \ref{l:clasFut}, we obtain a stronger version of Theorem \ref{t:Futaki}. For instance, let $P$ be a holomorphic principal bundle over a compact complex manifold $X$ which admits a solution $(g,h)$ of the the coupled Hermitian-Einstein system \eqref{eq:BHE}. Let $\tilde{g}$ be a Gauduchon metric in the conformal class of $g$. Then, if $\tilde{g}$ is balanced, then $\mathcal{F}_{\mathfrak{b}}=0$ where $\mathfrak{b}=[\tilde{\omega}^{n-1}]$.
\end{remark}

\subsection{Explicit formulae via anchored endomorphisms}\label{sec:Futakiformula}

Our next goal is to prove an explicit formula for the Futaki invariants in Theorem \ref{t:Futaki}. For this, we exploit further the structure of the Bott-Chern algebroid $\mathcal{Q}$ associated to a solution of the last equation in \eqref{eq:HSabstract}.   

We fix $P$ associated to a pair of holomorphic vector bundles $V_0$ and $V_1$ over $(X,\Omega)$, as in section \ref{sec:Futakidef}. Consider the Bott-Chern algebroid $\mathcal{Q} = \mathcal{Q}_{P,2i\partial\omega,\theta^h}$ (see Example \ref{def:Q0}) associated to a solution of
\begin{equation}\label{eq:BCalg}
    dd^c\omega+\IP{F_h \wedge F_h}  = 0,
\end{equation}
where $h = h_0 \oplus h_1$ and $\IP{,}$ is, as in \eqref{eq:pairingHS}, determined by a choice of $\alpha \in \mathbb{R}$. We will denote by $\Lambda^2 \mathcal{Q} \subset \mathrm{End} \ \mathcal{Q}$ the bundle of orthogonal endomorphisms of $\mathcal{Q}$, that is, satisfying
$$
\IP{\varphi \cdot , \cdot}_0 + \IP{\cdot , \varphi \cdot}_0 = 0.
$$

\begin{definition}
An element $\varphi \in \Gamma(\Lambda^2 \;\mathcal{Q})$ is called an \emph{anchored endomorphism} of $\mathcal{Q}$ if there exists $\phi \in \Gamma(\End T^{1,0})$ such that
\begin{equation}\label{eq:anchorcomp}
\pi \circ \varphi =\phi \circ \pi.
\end{equation}
\end{definition}

In our next result we provide an explicit characterization of anchored endomorphisms, via the identification of the smooth complex vector bundle underlying $\mathcal{Q}$ with $ T^{1,0}\oplus \mathrm{ad} P \oplus T^{*}_{1,0}$ (see Example \ref{def:Q0}).

\begin{lemma}\label{l:smoothEndQ}
Let $\mathcal{Q} = \mathcal{Q}_{P,2i\partial\omega,\theta^h}$ be the Bott-Chern algebroid associated to a solution of (\ref{eq:BCalg}). Let $\varphi$ be an anchored endomorphism of $\mathcal{Q}$. Then, there exists $\phi \in \Gamma(\End T^{1,0})$, $b\in \Omega^{2,0}$, $\sigma \in \Gamma(\Lambda^2 \mathrm{ad} P)$ skew-orthogonal, and $\alpha\in \Omega^{1,0}(\mathrm{ad} \hspace{1mm} P)$, uniquely determined by $\varphi$, such that
\begin{align}\label{eq:smoothEndQ}
\varphi= \varphi(\phi,\alpha,\sigma,b):= \left(\begin{array}{c c c}
\phi & 0 & 0\\
\alpha & \sigma & 0\\
b & -2\langle\alpha,\cdot\rangle & -\phi^*\\
\end{array}\right).
\end{align}
Conversely, any tuple $(\phi,b,\sigma,\alpha)$ as above defines an anchored endomorphism $\varphi$ of $\mathcal{Q}$ via formula \eqref{eq:smoothEndQ}. 
\end{lemma}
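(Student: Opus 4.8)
The plan is to run the whole argument fiberwise, in the explicit smooth model $\mathcal{Q} = T^{1,0}\oplus \ad P \oplus T^*_{1,0}$ of Example \ref{def:Q0}. Here $\IP{,}_0$ is the polarization of the quadratic form $\xi(V)+\IP{r,r}$, that is,
$$
\IP{(V,r,\xi),(W,t,\eta)}_0 = \tfrac{1}{2}(\xi(W)+\eta(V)) + \IP{r,t}.
$$
Since both defining properties of an anchored endomorphism are pointwise and linear, no holomorphicity is needed: I would simply write $\varphi$ as a $3\times 3$ matrix of bundle maps with rows and columns ordered as in the decomposition above, and determine which blocks are free and which are forced by (i) the anchor condition and (ii) the skew-symmetry $\IP{\varphi\,\cdot,\cdot}_0 + \IP{\cdot,\varphi\,\cdot}_0 = 0$ characterizing $\Lambda^2\mathcal{Q}$.

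First I would impose the anchor condition. As $\pi(V,r,\xi)=V$, the equation $\pi\circ\varphi = \phi\circ\pi$ says exactly that the $T^{1,0}$-component of $\varphi(V,r,\xi)$ is $\phi(V)$, independent of $r$ and $\xi$; this kills the two off-diagonal entries of the top row and identifies the top-left block with $\phi\in\Gamma(\End T^{1,0})$. I would then expand the skew-symmetry relation with the explicit pairing and group the terms by which pair among $V,r,\xi$ (from the first slot) and $W,t,\eta$ (from the second) occurs, each pair having to cancel on its own. The $(V,W)$ terms force the $T^{1,0}\to T^*_{1,0}$ block to be skew, hence a $(2,0)$-form $b\in\Omega^{2,0}$; the $(V,\eta)$ and $(W,\xi)$ terms force the $T^*_{1,0}\to T^*_{1,0}$ block to be $-\phi^*$; the $(r,t)$ terms force the $\ad P\to\ad P$ block $\sigma$ to be $\IP{,}$-skew, i.e.\ $\sigma\in\Gamma(\Lambda^2\ad P)$; the $(r,W)$ and $(t,V)$ terms tie the $T^{1,0}\to\ad P$ block, which as a bundle map is automatically an element $\alpha\in\Omega^{1,0}(\ad P)$, to the $\ad P\to T^*_{1,0}$ block, forcing the latter to be $-2\IP{\alpha,\cdot}$; and the $(\xi,t)$ and $(r,\eta)$ terms occur in isolation, forcing the $T^*_{1,0}\to\ad P$ block to vanish. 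Reading off the blocks reproduces \eqref{eq:smoothEndQ}, and since each of $\phi,\alpha,\sigma,b$ is literally a block of $\varphi$, uniqueness is immediate.

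For the converse I would simply check that the matrix \eqref{eq:smoothEndQ} built from an arbitrary tuple $(\phi,b,\sigma,\alpha)$ satisfies the anchor condition with the displayed $\phi$ and, by running the term-matching backwards, is $\IP{,}_0$-skew. The one point requiring care is the factor $2$ in $-2\IP{\alpha,\cdot}$: it comes from the relative weight between the $\tfrac{1}{2}$-normalized duality pairing and the unit-weight pairing on $\ad P$ inside $\IP{,}_0$, and it is exactly this mismatch that appears when balancing the $(r,W)$ terms. I would also make sure that no cross term couples the $T^*_{1,0}\to\ad P$ block to any other, which is what legitimately forces its vanishing and makes $\varphi$ block lower-triangular with diagonal $(\phi,\sigma,-\phi^*)$.
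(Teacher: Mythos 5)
Your proposal is correct: the block-by-block fiberwise computation, imposing the anchor condition and then matching terms in the polarized pairing $\tfrac{1}{2}(\xi(W)+\eta(V))+\IP{r,t}$, is exactly the content behind the paper's one-line proof, which simply defers to the analogous orthogonal-endomorphism decomposition in \cite[Section 3.1]{Ga0}. Your accounting of the factor $2$ in $-2\IP{\alpha,\cdot}$ and of the vanishing of the $T^*_{1,0}\to\ad P$ block is accurate, so nothing is missing.
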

\begin{proof}
The proof follows directly from \cite[Section 3.1]{Ga0}.
\end{proof}

In our next result we characterize the holomorphicity condition $\dbar_0 \varphi = 0$, for $\varphi$ in \eqref{eq:smoothEndQ}, where $\dbar_0$ denotes the Dolbeault operator in Example \ref{def:Q0}.

\begin{lemma}\label{l:holEndQ}
Let $\mathcal{Q}\cong \mathcal{Q}_{P,2i\partial\omega,\theta^h}$ be the Bott-Chern algebroid associated to a solution of (\ref{eq:BCalg}). Let $\varphi = \varphi(\phi,\alpha,\sigma,b)$ be an anchored endomorphism of $\mathcal{Q}$. Then $\varphi$ is holomorphic if and only if the following conditions are satisfied
\begin{equation}\label{eq:holEndQ}
\begin{split}
    \overline{\partial}\phi & =0\\
    \overline{\partial}\sigma & = 0\\
    \overline{\partial}\alpha+\sigma(F_h)-F_h(\phi \cdot,\cdot) & =0\\
    \overline{\partial}b + \phi \lrcorner (2i\partial \omega) - 2\langle \alpha \wedge F_h\rangle & = 0
\end{split}
\end{equation}
where
\begin{align*}
i_{Y^{1,0}}i_{V^{1,0}}(\phi \lrcorner (2i\partial \omega)) & = i_{Y^{1,0}}i_{\phi(V^{1,0})}(2i\partial \omega) + i_{\phi(Y^{1,0})}i_{V^{1,0}}(2i\partial \omega)
\end{align*}
\end{lemma}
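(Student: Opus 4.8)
The plan is to compute the induced Dolbeault operator on $\End \mathcal{Q}$ directly from the explicit formula \eqref{eq:DolQ} and the matrix presentation \eqref{eq:smoothEndQ}, and to read off the four conditions from the distinct blocks of the resulting $(0,1)$-form valued endomorphism. Recall that the induced operator acts by $(\dbar_0\varphi)(s) = \dbar_0(\varphi s) - \varphi(\dbar_0 s)$ for $s$ a smooth section of $\mathcal{Q}$, where in the second term $\varphi$ acts fibrewise on the $\mathcal{Q}$-valued part of $\dbar_0 s$. The first observation is that, since the pairing $\IP{,}_0$ is parallel for $\dbar_0$, the endomorphism $\dbar_0\varphi$ is again skew-orthogonal; hence it is completely determined by its blocks $T^{1,0}\to T^{1,0}$, $\ad P\to\ad P$, $T^{1,0}\to\ad P$, and $T^{1,0}\to T^*_{1,0}$, the remaining blocks being fixed by skew-orthogonality ($A_{**}=-A_{TT}^*$, and the $\ad P\to T^*_{1,0}$ block being the $\IP{,}_0$-adjoint of the $T^{1,0}\to\ad P$ block). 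I claim these four blocks produce exactly the four equations of \eqref{eq:holEndQ}.

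For the diagonal blocks the computation is immediate. Evaluating on $s=V+r+\xi$ and keeping the $T^{1,0}$-output, only the term $\dbar$ of \eqref{eq:DolQ} contributes, giving $\dbar(\phi V)-\phi\,\dbar V = (\dbar\phi)V$; in particular the off-diagonal entries of the top row of $\dbar_0\varphi$ vanish automatically, so $\dbar\phi=0$ is the full content of the $T^{1,0}$-output. Likewise, isolating the $r$-dependent part of the $\ad P$-output, the covariant pieces assemble into $\dbar^\theta(\sigma r)-\sigma(\dbar^\theta r)=(\dbar\sigma)(r)$, giving $\dbar\sigma=0$. Next I would extract the $T^{1,0}\to\ad P$ block, namely the $V$-dependent part of the $\ad P$-output: from $\dbar_0(\varphi s)$ this is $i_{\phi V}F_\theta^{1,1}+\dbar^\theta(i_V\alpha)$, and from $-\varphi(\dbar_0 s)$ it is $-\sigma(i_V F_\theta^{1,1})-i_{\dbar V}\alpha$. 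Using the graded identity $\dbar^\theta(i_V\alpha)-i_{\dbar V}\alpha=-i_V\dbar^\theta\alpha$ together with $F_\theta^{1,1}=F_h$, everything factors through $i_V$ and yields $\dbar\alpha+\sigma(F_h)-F_h(\phi\cdot,\cdot)=0$, the third equation.

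The main obstacle is the $T^{1,0}\to T^*_{1,0}$ block, which produces the fourth equation together with the anomaly term. Collecting the $V$-dependent part of the $T^*_{1,0}$-output, the form-derivative terms give $\dbar(i_V b)-i_{\dbar V}b=-i_V\dbar b$; the terms involving $\tau^{2,1}=2i\partial\omega$ give $-i_{\phi V}\tau^{2,1}-\phi^*(i_V\tau^{2,1})$; and the two curvature pairings give $2\IP{F_h,i_V\alpha}+2\IP{\alpha,i_V F_h}$. The delicate point is recognising, on the one hand, that inserting $\phi$ into the first holomorphic slot (the term $i_{\phi V}\tau^{2,1}$) together with its action through the transpose $\phi^*$ on the remaining cotangent index (the term $\phi^*(i_V\tau^{2,1})$) reassembles exactly $i_V(\phi\lrcorner(2i\partial\omega))$ in the sense defined in the statement; and, on the other hand, that the graded Leibniz rule $i_V\IP{\alpha\wedge F_h}=\IP{i_V\alpha,F_h}-\IP{\alpha\wedge i_V F_h}$, combined with the conventions fixing the block $-2\IP{\alpha,\cdot}$, turns the two pairing terms into $2\,i_V\IP{\alpha\wedge F_h}$. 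Factoring out $i_V$ then gives $\dbar b+\phi\lrcorner(2i\partial\omega)-2\IP{\alpha\wedge F_h}=0$. Since every manipulation above is an equivalence, the four blocks vanish if and only if the four equations in \eqref{eq:holEndQ} hold, and by the skew-orthogonality of $\dbar_0\varphi$ these four blocks control the entire operator, which completes the proof.
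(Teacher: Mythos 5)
Your proposal is correct and follows essentially the same route as the paper, which likewise computes $(\dbar_0\varphi)(s)=\dbar_0(\varphi s)-\varphi(\dbar_0 s)$ directly from the explicit formula \eqref{eq:DolQ} and imposes its vanishing for all $V$, $r$, $\xi$. Your block-by-block organisation and the observation that skew-orthogonality of $\dbar_0\varphi$ reduces the check to four blocks simply make explicit what the paper leaves to the reader.
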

\begin{proof}
With the notation in Lemma \ref{l:smoothEndQ}, the proof follows from 
$$
(\overline{\partial}_{0}\varphi)(V+\xi+r) = \overline{\partial}_0(\varphi(V+\xi+r))-\varphi(\overline{\partial}_0(V+\xi+r))
$$
using the expression for $\overline{\partial}_0$ given by (\ref{eq:DolQ}). Imposing that this expression vanishes for any $V$, $\xi$ and $r$ is equivalent to the equations above.
\end{proof}

\begin{remark}
The subspace of solutions to \eqref{eq:holEndQ} with $\phi=\sigma=0$ is given by
$$
S_0 = \{(b,\alpha) \; | \; \overline{\partial}b-2\langle \alpha \wedge F_h\rangle = 0 \;, b \in \Omega^{2,0}, \; \alpha\in H^0(X,\Omega^{1,0}(\mathrm{ad} \hspace{1mm} P))\}.
$$
If we define (cf. \cite[Proposition 4.6]{GaRuTi2})
$$
\delta_P : H^0(X,\Omega^{1,0}(\mathrm{ad} \hspace{1mm} P)) \longrightarrow H^{2,1}_{\overline{\partial}}(X) \colon \alpha \mapsto  [2\langle \alpha \wedge F_h\rangle],
$$
the space $S_0$ fits in the short exact sequence
$$
0\rightarrow H^{2,0}_{\overline{\partial}}(X) \rightarrow S_0 \rightarrow \mathrm{ker} \hspace{1mm} \delta_P \rightarrow 0
$$
In particular if $h^{2,0}_{\overline{\partial}}(X) > 0$ or if $h^0(\Omega^{1,0}(\mathrm{ad} \hspace{1mm} P))>h^{2,1}_{\overline{\partial}}(X)$, then $\mathcal{Q}$ has a holomorphic anchored endomorphism.
\end{remark}

\begin{remark}
Suppose that $X$ satisfies the $\partial \dbar$-Lemma. Then, for any $s\in H^{0}(X,\mathrm{ad} \hspace{1mm} P)$,  we can construct holomorphic anchored endomorphisms $\varphi= \varphi(\phi,\alpha,\sigma,b)$, defined by \eqref{eq:smoothEndQ}, as follows: set
$$
\phi=0 \hspace{2mm}, \hspace{2mm} \alpha=\partial^hs \hspace{2mm}, \hspace{2mm} \sigma=[s,\cdot].
$$
Now, we have
$$
\dbar\langle \partial^hs\wedge F_h\rangle = \langle \dbar\partial^hs\wedge F_h\rangle = \langle [F_h,s]\wedge F_h\rangle = - \langle s\wedge [F_h \wedge F_h]\rangle = 0,
$$
and hence, by the $\partial \dbar$-Lemma, there exists $b \in \Omega^{2,0}$ such that 
$$
\overline{\partial}b=2\langle \partial^hs\wedge F_h\rangle,
$$
since the right hand side is $\partial$-exact and $d$-closed.
\end{remark}

Next, we address the computation of the Futaki invariants in Theorem \ref{t:Futaki} for holomorphic anchored endomorphism of $\mathcal{Q}$. For this, given a pair of Hermitian metrics $g$ and $g_0$ on $X$, $\gamma \in \Gamma(\End T \otimes \CC)$, and $\tau \in \Omega^2$, we denote
$$
\tr_{g,g_0} \gamma := \frac{1}{2}g(\gamma Je_j^0,e^0_j), \qquad \Lambda_{\omega_0} \tau = \frac{1}{2} \tau(e^0_j,Je^0_j), 
$$
for any choice of $g_0$-orthonormal basis $e_1^0, \ldots, e_{2n}^0$ of $T$, where we use Einstein's convention to sum over repeated indices.

\begin{proposition}\label{prop:Futakiexp}
Consider the Bott-Chern algebroid $\mathcal{Q} = \mathcal{Q}_{P,2i\partial\omega,\theta^h}$ associated to a solution $(\omega,h)$ of (\ref{eq:BCalg}), with $\omega$ positive, and pairing induced by \eqref{eq:pairingHS} for $\alpha \in \RR$. Let $\varphi = \varphi(\phi,\alpha,\sigma,b)$ be a holomorphic anchored endomorphism of $\mathcal{Q}$ and let $\mathfrak{b}=[\omega_0^{n-1}] \in H^{n-1,n-1}_{BC}(X,\RR)$ be a balanced class. Then, the evaluation of the Futaki character in Lemma \ref{l:clasFut} is given by
\begin{equation}\label{eq:Futakiexp}
\begin{split}
\langle \mathcal{F}^\alpha(\varphi), \mathfrak{b} \rangle & =  - \int_X (\tr_{g,g_0} R_{\nabla^B} + \IP{\Lambda_{\omega_0}F_h,F_h} )  (e_k,\phi^{*_g}e_k^{0,1} - \phi e_k^{1,0})\frac{\omega_0^n}{n}\\
& -\int_X \IP{\tr_{g,g_0} R_{\nabla^B}^{0,2},b}_g\frac{\omega_0^n}{n}\\
& + \int_X \left( \tr_{\ad P}(\sigma[\Lambda_{\omega_0}F_h,\cdot])+\langle \sigma F_h(e_j^0,e_k),F_h(Je_j^0,e_k)\rangle\right)\frac{\omega_0^n}{n}\\
& + 2 \int_X \IP{\alpha(e_k),\Lambda_{\omega_0}\nabla^{h,-}_{e_k}F_h + F_h(Je^0_j,g^{-1}d^c\omega(e_j^0,e_k,\cdot))}\frac{\omega_0^n}{n}\\
\end{split}
\end{equation}
for any choices of $g$-orthonormal basis $e_1, \ldots, e_{2n}$ and $g_0$-orthonormal basis $e_1^0, \ldots, e_{2n}^0$ of $T$.
\end{proposition}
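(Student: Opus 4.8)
The plan is to compute the integrand of the Futaki character of Lemma \ref{l:clasFut} explicitly, exploiting the freedom in that lemma to take $\mathbf{G}$ to be the (possibly indefinite) Hermitian metric of Lemma \ref{t:Ggeneralized1}, whose Chern connection and curvature were determined in Section \ref{sec:curvature}. Since $\varphi$ is holomorphic and $F_{\mathbf{G}}$ is the Chern curvature, $\tr(\varphi F_{\mathbf{G}})$ is of type $(1,1)$, so that following the convention \eqref{eq:secondRicciG} one has $\tr(\varphi F_{\mathbf{G}}) \wedge \omega_0^{n-1} = \Lambda_{\omega_0}\tr(\varphi F_{\mathbf{G}})\, \tfrac{\omega_0^n}{n}$. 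Thus $\langle\mathcal{F}^\alpha(\varphi),\mathfrak{b}\rangle = \int_X \Lambda_{\omega_0}\tr(\varphi F_{\mathbf{G}})\,\tfrac{\omega_0^n}{n}$, and the whole problem reduces to identifying $\Lambda_{\omega_0}\tr(\varphi F_{\mathbf{G}})$ as a sum of scalar pairings.

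First I would bring $\varphi$ and $F_{\mathbf{G}}$ to a common description of the smooth bundle $T^{1,0}\oplus\ad P\oplus T^*_{1,0}$ underlying $\mathcal{Q} = \mathcal{Q}_{P,2i\partial\omega,\theta^h}$. The endomorphism $\varphi$ is already in block-triangular form by Lemma \ref{l:smoothEndQ}, whereas $F_{\mathbf{G}} = F_D$ is computed in Lemma \ref{lemma:connectioncurvature} in the model $V_-\cong T\oplus\ad P_K$; the two are intertwined by the explicit isometry $\psi$ of Lemma \ref{t:Ggeneralized1}, so that the curvature on $\mathcal{Q}$ is $\psi^{-1}\circ F_D\circ\psi$. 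Because $\varphi_{12}=\varphi_{13}=0$, expanding the trace leaves only a few surviving products, and $\tr(\varphi F_{\mathbf{G}})$ splits into four groups indexed by the data $(\phi,\sigma,\alpha,b)$ of Lemma \ref{l:smoothEndQ}: the $\phi$-group comes from $\tr_{T^{1,0}}(\phi\,(F_{\mathbf{G}})_{11}) - \tr_{T^*_{1,0}}(\phi^*(F_{\mathbf{G}})_{33})$ and is governed by the tangent block $R_{\nabla^-}-\mathbb{F}^\dagger\wedge\mathbb{F}$; the $\sigma$-group comes from $\tr_{\ad P}(\sigma\,(F_{\mathbf{G}})_{22})$ and is governed by $[F_\theta,\cdot]-\mathbb{F}\wedge\mathbb{F}^\dagger$; the $\alpha$-group collects the off-diagonal contributions $\alpha(F_{\mathbf{G}})_{12}$ and $-2\IP{\alpha,\cdot}(F_{\mathbf{G}})_{23}$, governed by the operator $\mathbb{I}$; and the $b$-group comes from $b\,(F_{\mathbf{G}})_{13}$. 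To pass from $R_{\nabla^-}$ to the Bismut curvature $R_{\nabla^B}$ of \eqref{eq:Futakiexp} I would invoke \eqref{eq:BismutHulleq} together with Lemma \ref{l:lifting2}, and for the $\alpha$-group I would contract $\mathbb{I}$ exactly as in the proof of Proposition \ref{prop:secondRicci}, producing $\Lambda_{\omega_0}\nabla^{h,-}_{e_k}F_h$ and the accompanying $d^c\omega$-correction; the $\sigma$-group yields $\tr_{\ad P}(\sigma[\Lambda_{\omega_0}F_h,\cdot])$ and the quadratic term in $F_h$ from $\mathbb{F}\wedge\mathbb{F}^\dagger$.

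The main obstacle is the bookkeeping of the conjugation by $\psi$ together with the musical isomorphisms it carries. Although $F_{\mathbf{G}}$ is globally of type $(1,1)$, the isometry $\psi$ maps the summand $T^{1,0}$ of $\mathcal{Q}$ into $T^{1,0}\subset V_-\otimes\CC$ but maps $T^*_{1,0}$ into $T^{0,1}$ through the factor $-\tfrac12 e^{-i\omega}g^{-1}$; consequently the block $(F_{\mathbf{G}})_{13}$ that pairs with $b$ encodes the $T^{0,1}\to T^{1,0}$ part of the tangent curvature with one index raised by $g^{-1}$. Tracking this carefully is precisely what turns the naive $(1,1)$-expression into the pairing $\IP{\tr_{g,g_0}R_{\nabla^B}^{0,2},b}_g$ of the $(0,2)$-part of the Bismut curvature with the $(2,0)$-form $b$, and an analogous raising of indices accounts for the combination $\phi^{*_g}e_k^{0,1}-\phi e_k^{1,0}$ in the $\phi$-line. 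Once the conjugation and the two contractions $\tr_{g,g_0}$ and $\Lambda_{\omega_0}$ (taken with respect to $g$ and $g_0$, respectively) are organized correctly, the remaining input is purely the algebraic identities for $\mathbb{F}\wedge\mathbb{F}^\dagger$, $\mathbb{F}^\dagger\wedge\mathbb{F}$ and $\mathbb{I}$ recorded in Lemma \ref{lemma:connectioncurvature}, which assemble into the four lines of \eqref{eq:Futakiexp}.
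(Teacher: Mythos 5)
Your proposal is correct and follows essentially the same route as the paper: it invokes the metric-independence in Lemma \ref{l:clasFut} to compute with the specific $\mathbf{G}$ of Lemma \ref{t:Ggeneralized1}, transfers everything via the isometry $\psi$ to the $T\otimes\CC\oplus\ad P$ model where $F_{\mathbf{G}}=F_D$ is given by Lemma \ref{lemma:connectioncurvature}, splits the trace linearly into the $(\phi,\sigma,\alpha,b)$ blocks, and converts $R_{\nabla^-}$ to $R_{\nabla^B}$ via \eqref{eq:BismutHulleq} and the Bianchi identity. The paper conjugates $\varphi$ rather than $F_D$ and contracts with $\tfrac12(\cdot)(e_j^0,Je_j^0)$ rather than writing $\Lambda_{\omega_0}$, but these are the same computation by cyclicity of the trace and the $(1,1)$-type of $\tr(\varphi F_{\mathbf{G}})$.
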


\begin{proof}
Consider the isomorphism $\psi \colon \mathcal{Q}_{P,2i\partial\omega,\theta^h} \to T \otimes \CC \oplus\ad P$ defined by Lemma \ref{t:Ggeneralized1}, that is,
$$
\psi(V + r + \xi) = V - \tfrac{1}{2} g^{-1} \xi + r.
$$
Then $\tilde \varphi : = \psi \circ \varphi \circ \psi^{-1}$ is given by
$$
\tilde \varphi(V + r) =  \phi(V^{1,0}) - \phi^{*_g}(V^{0,1}) - \tfrac{1}{2} g^{-1}i_{V^{1,0}}b + g^{-1}\IP{\alpha,r} + \sigma(r) + i_{V^{1,0}}\alpha
$$
where $\phi^{*_g}(V^{0,1}) = g^{-1} g(V^{0,1},\phi\cdot)$. By Lemma \ref{l:clasFut} and formula \eqref{eq:secondRicciG}, it suffices to calculate
$$
\tr \tilde \varphi S_\mathbf{G} = \frac{1}{2} \tr \tilde \varphi F_\mathbf{G}(e_j^0,Je_j^0).
$$
For this, using that $\varphi = \varphi(\phi,\alpha,\sigma,b)$ depends linearly on $\phi$, $\alpha$, $\sigma$, and $b$, we can decompose uniquely 
$$
\tilde \varphi = \tilde \varphi_\phi + \tilde \varphi_b + \tilde \varphi_\alpha + \tilde \varphi_\sigma,
$$
so that $\tilde \varphi_\phi$ only depends on $\phi$, and similarly for the rest. Now, denoting $\pi_{1,0} \colon T \otimes \CC \to T^{1,0}$ the natural projection, by Lemma \ref{lemma:connectioncurvature} we have
\begin{align*}
\tr \tilde \varphi_\phi F_\mathbf{G}(e_j^0,Je_j^0) & = \tr_{T \otimes \CC} \phi \circ \pi_{1,0}(R_{\nabla^-}(e^0_j,Je^0_j)  - \mathbb{F}^\dagger \wedge \mathbb{F}(e^0_j,Je^0_j))\\
& - \tr_{T \otimes \CC} \phi^{*_g} \circ \pi_{0,1}(R_{\nabla^-}(e^0_j,Je^0_j)  - \mathbb{F}^\dagger \wedge \mathbb{F}(e^0_j,Je^0_j))\\
& = g(R_{\nabla^-}(e^0_j,Je^0_j)e_k,\phi^{*_g}e^{0,1}_k - \phi e^{1,0}_k) \\
& - \IP{F_h(Je^0_j,\phi^{*_g}e^{0,1}_k - \phi e^{1,0}_k),F_h(e_j^0,e_k)}\\
& + \IP{F_h(e^0_j,\phi^{*_g}e^{0,1}_k - \phi e^{1,0}_k),F_h(Je_j^0,e_k)}\\
& = \tfrac{1}{2}dd^c\omega (e^0_j,Je^0_j,e_k,\phi^{*_g}e^{0,1}_k - \phi e^{1,0}_k)\\
& + g(R_{\nabla^B}(e_k,\phi^{*_g}e^{0,1}_k - \phi e^{1,0}_k)e^0_j,Je^0_j)\\
& - \IP{F_h(Je^0_j,\phi^{*_g}e^{0,1}_k - \phi e^{1,0}_k),F_h(e_j^0,e_k)}\\
& + \IP{F_h(e^0_j,\phi^{*_g}e^{0,1}_k - \phi e^{1,0}_k),F_h(Je_j^0,e_k)}\\
& = - 2\tr_{g,g_0} R_{\nabla^B}(e_k,\phi^{*_g}e^{0,1}_k - \phi e^{1,0}_k)\\
& -2  \IP{\Lambda_{\omega_0}F_h,F_h(e_k,\phi^{*_g}e^{0,1}_k - \phi e^{1,0}_k)},
\end{align*}
where in the third and fourth equalities we have used \eqref{eq:BismutHulleq} and \eqref{eq:BCalg}, respectively. Similarly, 
\begin{align*}
\tr \tilde \varphi_b F_\mathbf{G}(e_j^0,Je_j^0) & = - \tfrac{1}{2}\tr_{T \otimes \CC} g^{-1}b \circ \pi_{1,0}(R_{\nabla^-}(e^0_j,Je^0_j)  - \mathbb{F}^\dagger \wedge \mathbb{F}(e^0_j,Je^0_j))\\
& = -\tfrac{1}{2}b(R_{\nabla^-}(e^0_j,Je^0_j)e_k,e_k)
+\tfrac{1}{2} b(g^{-1}\IP{F_h(Je^0_j,\cdot),F_h(e_j^0,e_k)},e_k)\\
& - \tfrac{1}{2}b(g^{-1} \IP{F_h(e^0_j,\cdot),F_h(Je_j^0,e_k)},e_k)\\
& = -\tfrac{1}{2}g(R_{\nabla^B}(e_k,e_m)e^0_j,Je^0_j)b(e_m,e_k)\\
& + \tfrac{1}{2} b(g^{-1}\IP{F_h(e^0_j,\cdot),F_h(e_j^0,e_k)},Je_k)\\ & - \tfrac{1}{2}b(g^{-1} \IP{F_h(e^0_j,\cdot),F_h(Je_j^0,e_k)},e_k)\\
& = - 2\IP{ \tr_{g,g_0} R_{\nabla^B}^{0,2},b}_g,
\end{align*}
and, using the notation in Lemma \ref{lemma:connectioncurvature},
\begin{align*}
\tr \tilde \varphi_\alpha F_\mathbf{G}(e_j^0,Je_j^0) & = 2\IP{\alpha(e_k),\mathbb{I}(e_j^0,Je_j^0)e_k}\\
& = 4\IP{\alpha(e_k),\Lambda_{\omega_0}\nabla^{h,-}_{e_k}F_h} + 4\IP{\alpha(e_k),F_h(Je^0_j,g^{-1}d^c\omega(e_j^0,e_k,\cdot))}
\end{align*}
Finally, taking basis $\{r_k\}$ and $\{\tilde r_k\}$ of $\ad P$ such that $\IP{r_k,\tilde r_j} = \delta_{kj}$, we have 
\begin{align*}
\tr \tilde \varphi_\sigma F_\mathbf{G}(e_j^0,Je_j^0) & = \tr_{\ad P} \sigma \circ ([F_h(e_j^0,Je_j^0),\cdot] - \mathbb{F} \wedge \mathbb{F}^\dagger(e_j^0,Je_j^0))\\
& = \IP{\tilde r_k, \sigma([F_h(e_j^0,Je_j^0),r_k])}\\
& -  \IP{\tilde r_k, \sigma F_h(Je_j^0,g^{-1}\IP{F_h(e_j^0,\cdot),r_k})}\\
& +\IP{\tilde r_k \sigma F_h(e_j^0,g^{-1}\IP{F_h(Je_j^0,\cdot),r_k})}\\
& = 2\tr_{\ad P} \sigma([\Lambda_{\omega_0}F_h,\cdot]\\
& -  \IP{\tilde r_k, \sigma F_h(Je_j^0,e_k)}\IP{F_h(e_j^0,e_k),r_k}\\
& + \IP{\tilde r_k, \sigma F_h(e_j^0,e_k)}\IP{F_h(Je_j^0,e_k),r_k}\\
& = 2\tr_{\ad P} \sigma([\Lambda_{\omega_0}F_h,\cdot] + 2\IP{\sigma F_h(e_j^0,e_k),F_h(Je_j^0,e_k)}.
\end{align*}

\end{proof}

\subsection{The algebroid viewpoint}\label{sec:algebroid}

In this section we change our perspective and think of the Hull-Strominger system as defining a canonical geometry for a fixed Bott-Chern algebroid $\mathcal{Q}$ over a Calabi-Yau manifold $(X,\Omega)$. This was the approach taken in \cite{GaRuShTi}, where it was proved that these equations admit a variational interpretation by means of the \emph{dilaton functional}. To simplify the notation, we embrace for a moment the formalism of principal bundles. We fix a holomorphic principal $G$-bundle $P$ over $X$. In this setup, $h$ will denote a reduction of $P$ to a maximal compact subgroup $K \subset G$ and $\theta^h$ will denote the associated Chern connection. The basic idea is provided by the following definition:

\begin{definition}\label{def:HSonQ}
Let $\mathcal{Q}$ be a Bott-Chern algebroid over a Calabi-Yau manifold $(X,\Omega)$, with principal bundle $P$ and bundle of quadratic Lie algebras $(\ad P,\IP{,})$. Then, we say that a pair $(\omega,h)$, where $\omega$ is a Hermitian form on $X$ and $h$ as before, solving 
$$
dd^c\omega + \IP{F_h \wedge F_h} = 0,
$$
is a metric on $\mathcal{Q}$, if there is an isomorphism of Bott-Chern algebroids
$$
\mathcal{Q}_{P,2i\partial\omega,\theta^h} \cong \mathcal{Q},
$$
where $\mathcal{Q}_{P,2i\partial\omega,\theta^h}$ is defined as in Example \ref{def:Q0}. Furthermore, we say that $(\omega,h)$ solves the Hull-Strominger system on $\mathcal{Q}$ if
\begin{equation}\label{eq:HSQ}
\begin{split}
F_h\wedge \omega^{n-1} & = 0,\\
d(\|\Omega\|_\omega \omega^{n-1} ) & = 0.
\end{split}
\end{equation}
\end{definition}

Notice that the existence of Hermitian metrics on a given $\mathcal{Q}$ is, a  priori, a difficult question (cf. Example \ref{ex:h-19} and \cite[Proposition 3.15]{GaRuShTi}). Without loss of generality, we can assume that $\mathcal{Q} = \mathcal{Q}_{P,2i\partial\tau_0,\theta^{h_0}}$ for a pair $(\tau_0,h_0)$ solving $dd^c\tau_0 + \IP{F_{h_0} \wedge F_{h_0}} = 0$. Here, $\tau_0$ is a real $(1,1)$-form on $X$ which may not be positive definite. Then, the condition for $(\omega,h)$ to be a metric on $\mathcal{Q}$ can be written more explicitly as
$$
db  = 2i\partial \omega - 2i\partial\tau_0 + 2 \langle a \wedge F_{h_0} \rangle + \langle a \wedge d^{\theta^{h_0}}a\rangle + \frac{1}{3} \IP{a \wedge [a \wedge a]},
$$
for $a = \theta^h - \theta^{h_0} \in \Omega^{1,0}(\ad P)$ and some $b \in \Omega^{2,0}$.

Our next result gives an obstruction to the existence of solutions of the Hull-Strominger system on a given Bott-Chern algebroid.

\begin{theorem}\label{thm:FutakionQ}
Let $\mathcal{Q}$ be a Bott-Chern algebroid over a Calabi-Yau manifold $(X,\Omega)$. Consider the associated Futaki invariant
$$
\mathcal{F} : H^{0}(X,\mathrm{End} \; \mathcal{Q}) \rightarrow H^{1,1}_A(X)
$$
constructed in Lemma \ref{l:clasFut}. Then, if there exists a solution of the Hull-Strominger system \eqref{eq:HSQ} on $\mathcal{Q}$ with balanced class $\mathfrak{b} \in H^{n-1,n-1}_{BC}(X,\RR)$, it follows that
$$
\langle \mathcal{F}, \mathfrak{b} \rangle = 0.
$$
\end{theorem}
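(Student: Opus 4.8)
The plan is to reduce the statement to the vanishing criterion of Lemma \ref{l:clasFut}, by producing from the given solution a genuinely \emph{balanced} metric on $X$ together with a pseudo-Hermitian metric on $\mathcal{Q}$ that solves the Hermitian-Einstein equation in the class $\mathfrak{b}$. The underlying mechanism is the same as in the proof of Theorem \ref{t:Futaki}, but recast intrinsically on $\mathcal{Q}$.

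First I would unravel Definition \ref{def:HSonQ}: a solution $(\omega,h)$ of the Hull-Strominger system on $\mathcal{Q}$ supplies a pair satisfying the Bianchi identity $dd^c\omega + \IP{F_h \wedge F_h} = 0$, an isomorphism of Bott-Chern algebroids $\mathcal{Q}_{P,2i\partial\omega,\theta^h} \cong \mathcal{Q}$, and the two equations \eqref{eq:HSQ}, namely the Hermitian-Einstein equation $F_h \wedge \omega^{n-1} = 0$ and the conformally balanced condition $d(\|\Omega\|_\omega \omega^{n-1}) = 0$. I would then observe that $(g,h)$ solves the coupled Hermitian-Einstein system \eqref{eq:BHE} with $z = 0$: the first and third equations are exactly the Hermitian-Einstein equation for $h$ and the Bianchi identity just listed, while the remaining equation $\rho_B = 0$ follows from the conformally balanced condition precisely as in the proof of Proposition \ref{prop:HS-BHE}, since $d(\|\Omega\|_\omega \omega^{n-1}) = 0$ forces $\nabla^B(\|\Omega\|_\omega^{-1}\Omega) = 0$ and hence the vanishing of the Bismut-Ricci form. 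Applying Proposition \ref{prop:BHE-HE} to this solution (which is stated for an arbitrary holomorphic principal $G$-bundle, as required here), the induced generalized Hermitian metric $\mathbf{G}$ on $\mathcal{Q}_{P,2i\partial\omega,\theta^h} \cong \mathcal{Q}$ solves $F_{\mathbf{G}} \wedge \omega^{n-1} = 0$.

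The main point, and where care is needed, is that the metric $\omega$ produced by the Hull-Strominger system is only conformally balanced, whereas the vanishing clause of Lemma \ref{l:clasFut} requires a genuinely balanced metric whose $(n-1)$-st power represents $\mathfrak{b}$. I would resolve this by the conformal rescaling $\tilde\omega := \|\Omega\|_\omega^{1/(n-1)}\,\omega$, which is a positive $(1,1)$-form since $\|\Omega\|_\omega > 0$, and which satisfies $\tilde\omega^{n-1} = \|\Omega\|_\omega\,\omega^{n-1}$. Hence $d\tilde\omega^{n-1} = d(\|\Omega\|_\omega\omega^{n-1}) = 0$, so $\tilde\omega$ is balanced with balanced class $[\tilde\omega^{n-1}] = [\|\Omega\|_\omega\omega^{n-1}] = \mathfrak{b}$. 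The crucial observation is that the Chern curvature $F_{\mathbf{G}}$ of the bundle metric $\mathbf{G}$ does not involve the background metric on $X$, so wedging with $\tilde\omega^{n-1}$ merely multiplies by the scalar $\|\Omega\|_\omega$:
\[
F_{\mathbf{G}} \wedge \tilde\omega^{n-1} = \|\Omega\|_\omega\,\bigl(F_{\mathbf{G}} \wedge \omega^{n-1}\bigr) = 0.
\]

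Finally I would invoke Lemma \ref{l:clasFut}: with the pseudo-Hermitian metric $\mathbf{G}$ on $\mathcal{Q}$ (transported along $\mathcal{Q}_{P,2i\partial\omega,\theta^h}\cong\mathcal{Q}$) and the balanced metric $\tilde\omega$ representing $\mathfrak{b}$, the Hermitian-Einstein equation $F_{\mathbf{G}} \wedge \tilde\omega^{n-1} = 0$ holds, whence the character $\mathcal{F}_{\mathfrak{b}}$ vanishes, i.e. $\langle \mathcal{F}, \mathfrak{b}\rangle = 0$. The only genuine obstacle is the balanced-versus-conformally-balanced mismatch described above; once it is handled by this rescaling, the statement reduces entirely to already-established results. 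Equivalently, one could bypass the rescaling and evaluate $\langle\mathcal{F},\mathfrak{b}\rangle$ directly on the representative $\nu = \|\Omega\|_\omega\omega^{n-1}$ of $\mathfrak{b}$, using that $\tr(\varphi F_{\mathbf{G}})\wedge\omega^{n-1} = \tr\bigl(\varphi(F_{\mathbf{G}}\wedge\omega^{n-1})\bigr) = 0$ pointwise for every $\varphi \in H^0(X,\End\mathcal{Q})$.
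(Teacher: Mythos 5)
Your proposal is correct and follows essentially the same route as the paper, which simply invokes Proposition \ref{prop:HS-BHE}, Proposition \ref{prop:BHE-HE} (equivalently Proposition \ref{prop:HS-HE}) and Lemma \ref{l:clasFut}. The conformal rescaling $\tilde\omega = \|\Omega\|_\omega^{1/(n-1)}\omega$ (or the direct evaluation on the representative $\|\Omega\|_\omega\,\omega^{n-1}$) is a detail the paper leaves implicit, and you handle it correctly.
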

\begin{proof}
The proof follows, as in the proof of Theorem \ref{t:Futaki}, by Proposition \ref{prop:HS-HE} and Lemma \ref{l:clasFut}.
\end{proof}

Recall that a given Bott-Chern algebroid has an associated affine space of \emph{real Aeppli classes} $\Sigma_\mathcal{Q}(\RR)$ (see \cite[Definition 3.20]{GaRuShTi}), modelled on the Kernel of \eqref{eq:partialmap}. The conjectural picture which was put forward by the first author jointly with Rubio, Shahbazi, and Tipler, is that one should expect uniqueness of solutions of the Hull-Strominger on each Aeppli class up to automorphisms of $\mathcal{Q}$ (cf. \cite[Appendix A]{GaRuShTi}). If true, this yields a well-defined map
$$
\Sigma_\mathcal{Q}(\RR) \supset U \longrightarrow \{\mathfrak{b} \; | \; \langle \mathcal{F}, \mathfrak{b} \rangle = 0 \} \subset H^{n-1,n-1}_{BC}(X,\RR)
$$
on the locus $U \subset \Sigma_\mathcal{Q}(\RR) $ of Aeppli classes which admit a solution. It is an interesting open question to see if the previous (conjectural) map can be extended to $\Sigma_\mathcal{Q}(\RR)$. This would yield a more transparent obstruction associated to the Futaki invariant, defined in terms of the given Bott-Chern algebroid and a choice of Aeppli class.

\end{document}